\definecolor{blue}{rgb}{0,0,1}
\newcommand*{\rom}[1]{\expandafter\@slowromancap\romannumeral {\sharp}1@}
\theoremstyle{definition}
\newcommand{\RR}{\mathcal{R}} 
\newcommand{\ITTM}{$\mathrm{ITTM}$} 
\newcommand{\ML}{$\mathrm{ITTM_{\mathrm{ML}}}$} 
\newcommand{\om}{\omega_1^{\mathrm{ck}}}
\newtheorem{fact}{fact}[section]
\newtheorem{thm}[fact]{Theorem}
\newtheorem{lemma}[fact]{Lemma}
\newtheorem{definition}[fact]{Definition}
\newtheorem{defini}[fact]{Definition}
\newtheorem{remark}[fact]{Remark}
\newtheorem{question}[fact]{Question}
\newtheorem{claim}[fact]{Claim}
\newtheorem*{claim*}{Claim}
\newtheorem*{subclaim*}{Subclaim}
\newenvironment{enumerate-(a)}{\begin{enumerate}[label={\upshape (\alph*)}, leftmargin=2pc]}{\end{enumerate}}
\newenvironment{enumerate-(a)-r}{\begin{enumerate}[label={\upshape (\alph*)}, leftmargin=2pc,resume]}{\end{enumerate}}
\newenvironment{enumerate-(A)}{\begin{enumerate}[label={\upshape (\Alph*)}, leftmargin=2pc]}{\end{enumerate}}
\newenvironment{enumerate-(A)-r}{\begin{enumerate}[label={\upshape (\Alph*)}, leftmargin=2pc,resume]}{\end{enumerate}}
\newenvironment{enumerate-(i)}{\begin{enumerate}[label={\upshape (\roman*)}, leftmargin=2pc]}{\end{enumerate}}
\newenvironment{enumerate-(i)-r}{\begin{enumerate}[label={\upshape (\roman*)}, leftmargin=2pc,resume]}{\end{enumerate}}
\newenvironment{enumerate-(I)}{\begin{enumerate}[label={\upshape (\Roman*)}, leftmargin=2pc]}{\end{enumerate}}
\newenvironment{enumerate-(I)-r}{\begin{enumerate}[label={\upshape (\Roman*)}, leftmargin=2pc,resume]}{\end{enumerate}}
\newenvironment{enumerate-(1)}{\begin{enumerate}[label={\upshape (\arabic*)}, leftmargin=2pc]}{\end{enumerate}}
\newenvironment{enumerate-(1)-r}{\begin{enumerate}[label={\upshape (\arabic*)}, leftmargin=2pc,resume]}{\end{enumerate}}
\title[Randomness via infinite computation]{Randomness via infinite computation and effective descriptive set theory} 
\author{Merlin Carl and Philipp Schlicht}
\begin{document}

\maketitle  

\begin{abstract} 
We study randomness beyond $\Pi^1_1$-randomness and its Martin-L\"of type variant, introduced in \cite{MR2340241} and further studied in \cite{Continuous-higher-randomness}. 

The class given by the infinite time Turing machines (\ITTM s), introduced by Hamkins and Kidder, is strictly between $\Pi^1_1$ and $\Sigma^1_2$. 
We prove that the natural randomness notions associated to this class have several desirable properties resembling those of the classical random notions such as Martin-L\"of randomness, and randomness notions defined via effective descriptive set theory such as $\Pi^1_1$-randomness. 
For instance, mutual randoms do not share 
information and can be characterized as in van Lambalgen's theorem. 
We also obtain some differences to the hyperarithmetic setting. 
Already at the level of $\Sigma^1_2$, some properties of randomness notions are independent \cite{Infinite-computations}. 

Towards the results about randomness, we prove the following analogue  to a theorem of Sacks. 
If a real is infinite time Turing computable relative to all reals in some given set of reals with positive Lebesgue measure, then it is already infinite time Turing computable. 
As a technical tool, we prove facts of independent interest about random forcing over admissible sets and increasing unions of admissible sets. 
These results are also useful for more efficient proofs of some classical results about hyperarithmetic sets. 
\end{abstract}

\setcounter{tocdepth}{2} 
\tableofcontents

\section{Introduction} 


Algorithmic randomness studies formal notions that express the intuitive concept of an \emph{arbitrary} or \emph{random} infinite bit sequence 
with respect to Turing programs. 
The most prominent such notion is \emph{Martin-L\"of randomness} ($\mathrm{ML}$). 
A real number, i.e. a sequence of length the natural numbers with values $0$ and $1$, 
is $\mathrm{ML}$-random if and only if it is not contained
in a set of Lebesgue measure $0$ that can be effectively approximated by a Turing machine in a precise sense. 
We refer the reader to comprehensive treatments of this topic in \cite{MR2732288, MR2548883}. 

Martin-L\"of already suggested that the classical notions of randomness are too weak. Moreover, Turing computability is relatively weak  
in comparison with notions in descriptive set theory. 
Therefore higher notions of randomness have been considered, for instance, computably enumerable sets are replaced with $\Pi^1_1$ sets (see \cite{MR2340241, Continuous-higher-randomness}). 
These notions were recently studied in \cite{Continuous-higher-randomness}, and in particular the authors defined a continuous relativization which allowed them to prove a variant of van Lambalgen's theorem for $\Pi^1_1$-ML-random. We will use this or the Martin-L\"of variant of ITTM-random reals in Section \ref{subsection ML}. 

There are various desirable properties for a notion of randomness, which many of the formal notions possess, and which can serve as criteria for the evaluation of such a notion. 
For instance, different approaches to the notion of randomness, 
such as not having effective rare properties, being incompressible or being unpredictable are often equivalent. 
Van Lambalgen's theorem states that each half of a random sequence is random with respect to the other half. 
Moreover, there is often a universal test. 
For instance $\mathrm{ML}$-randomness and its $\Pi^1_1$-variant (see  \cite{MR2340241} and \cite{Continuous-higher-randomness} for the relativization) satisfy these conditions. 
Some types of random reals are not informative and real numbers that are mutually random do not share any nontrivial information. This does not hold for $\mathrm{ML}$-randomness and its variant at the level of $\Pi^1_1$, but it does hold for $\Pi^1_1$-randomness and the notion of \ITTM-randomness studied in this paper. 

Higher randomness studies properties of classical randomness notions for higher variants. Various results can be extended to higher randomness notions, assuming sufficiently large cardinals (see e.g. \cite{Yu}). 
However, already at the level of $\Sigma^1_2$, many properties of randomness notions are independent \cite{Infinite-computations}. 
Therefore we consider classes strictly between $\Pi^1_1$ and $\Sigma^1_2$. 

The infinite time Turing machines introduced by Hamkins and Kidder (see \cite{MR1771072}) combine the appeal of machine models with considerable strength. 
The notions decidable, semi-decidable, computable, writable etc. will refer to these machines. 
The strength of these machines is strictly above $\Pi^1_1$ and therefore, this motivates the consideration of notions of randomness based on \ITTM s. 
 This project was started in \cite{Infinite-computations} and continued in \cite{Randomness-and-degree-theory-for-ITRMs, Infinite-time-recognizability-from-random-oracles}. 

We consider the following notions of randomness for a real. 
\begin{itemize} 
\item 
\ITTM-random: avoids every semidecidable null set, 
\item 
\ITTM-decidable random: avoids every decidable null set, 
\item 
\ML-random:  like $\mathrm{ML}$-randomness, but via ITTMs instead of Turing machines.
\end{itemize} 

With respect to the above criteria, they perform differently. 
As we show below, all notions satisfy van Lambalgen's theorem. 
We will see that there is a universal test for \ITTM-randomness and \ML-randomness, but not for ITTM-decidable randomness, and we will relate these notions to randomness over initial segments of the constructible hierarchy. 
A new pohenomenon for ITTMs compared to the hyperarithmetic setting is the existence of \emph{lost melodies}, i.e. non-computable recognizable sets (see \cite{MR1771072}). 
We will see that lost melodies 
are not computable from any \ITTM-random real. 
Moreover, we observe that as in \cite{MR2340241}, \ML-randomness is equivalent to a notion of incompressibility of the finite initial segments of the string. 

The first main result is an analogue to a result of Sacks \cite[Corollary 11.7.2]{MR2732288}: 
computability relative to all elements of a set of positive Lebesgue measure implies computability (asked in \cite[Section 3]{Infinite-computations}). 
This result is used in several proofs below. 

\begin{thm} \label{main theorem 1} (Theorem \ref{writable reals from non-null sets}) 
Suppose that $A$ is a subset of the Cantor space ${}^\omega2$ with $\mu(A)>0$ and a real $x$ is ITTM-computable from all elements of $A$. Then $x$ is ITTM-computable. 
\end{thm}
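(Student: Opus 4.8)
The plan is to transpose Sacks's classical theorem to infinite time Turing machines via the Welch analysis of writability: a real $r$ is \ITTM-computable (equivalently, writable) if and only if $r\in L_\lambda$, where $\lambda$ is the supremum of the clockable, equivalently writable, ordinals; relativized, $r$ is $y$-writable if and only if $r\in L_{\lambda^y}[y]$, where $\lambda^y$ is the supremum of the $y$-writable ordinals. So the hypothesis says that $x\in L_{\lambda^y}[y]$ for every $y\in A$, and the goal is $x\in L_\lambda$. Since $\lambda$ is a countable limit of admissible ordinals, $L_\lambda$ is a countable increasing union of admissible sets, and I would run a mutual-genericity argument for random forcing over it, using the facts about random forcing over admissible sets and over their increasing unions.

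The first step is a preservation fact. The set $R$ of reals random over $L_\lambda$ is co-null, since $L_\lambda$ is countable and hence contains only countably many Borel codes, so that the complement of $R$ is a countable union of null sets. Moreover, for every $y\in R$ one has $\lambda^y=\lambda$, and hence $L_{\lambda^y}[y]=L_\lambda[y]$: this is where the random-forcing machinery enters, the point being that random forcing is small enough that adjoining a real random over $L_\lambda$ (i.e. over the admissible levels cofinal in $\lambda$) creates no new writable ordinals --- the analogue here of a random real not collapsing $\om$. Since $\mu(A)>0$, the set $A$ meets the co-null set $R$; fix $y_0\in A\cap R$. As $L_\lambda[y_0]$ is again countable, the reals random over $L_\lambda[y_0]$ also form a co-null set, which $A$ therefore meets; fix $y_1\in A$ random over $L_\lambda[y_0]$. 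Then $y_1\in R$ as well, so $\lambda^{y_0}=\lambda^{y_1}=\lambda$.

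By the standard fact that an iteration of random forcing is two-dimensional random forcing --- the forcing counterpart of van Lambalgen's theorem --- the pair $(y_0,y_1)$ is generic over $L_\lambda$ for the square of random forcing, so $y_0$ and $y_1$ are mutually random over $L_\lambda$. Combining the hypothesis with the preservation fact gives $x\in L_{\lambda^{y_0}}[y_0]=L_\lambda[y_0]$ and $x\in L_{\lambda^{y_1}}[y_1]=L_\lambda[y_1]$. Now the mutual-genericity intersection lemma --- in the form valid over an increasing union of admissible sets --- yields $L_\lambda[y_0]\cap L_\lambda[y_1]=L_\lambda$, so $x\in L_\lambda$; that is, $x$ is \ITTM-computable.

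I expect the main obstacle to be exactly the facts about random forcing that this argument takes for granted: that it can be carried out over an admissible set, and, more delicately, over the non-admissible increasing union $L_\lambda$. Unlike $\om$ in Sacks's original proof, $\lambda$ is not admissible, so $L_\lambda$ does not satisfy $\mathsf{KP}$ and one cannot simply perform the forcing and the genericity bookkeeping inside it; one must instead check that antichains, the forcing relation for the relevant bounded-complexity formulas, and the mutual-genericity and intersection arguments all behave correctly along the cofinal sequence of admissible levels, and in particular that $\lambda^y=\lambda$ for $y$ random over $L_\lambda$. Establishing these forcing-theoretic facts is the technical core; granting them, the Sacks-style argument above is short.
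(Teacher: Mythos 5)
Your skeleton --- pick two mutually random elements of $A$, use Welch's characterization $x\leq_{w}y \Leftrightarrow x\in L_{\lambda^y}[y]$, and intersect the two extensions via the intersection lemma for side-by-side randoms --- is exactly the strategy of the paper's proof of Theorem \ref{ITTMSacks}. The genuine gap is the step you call the ``preservation fact'': you assert that every $y$ random over $L_\lambda$ satisfies $\lambda^y=\lambda$, and accordingly you only choose $y_0,y_1\in A$ random over $L_\lambda$ (resp.\ over $L_\lambda[y_0]$). Randomness over $L_\lambda$ is far too weak for this, and the heuristic you invoke (``the analogue of a random real not collapsing $\om$'') fails at the corresponding level one step down: randomness over $L_{\om}$, i.e.\ $\Delta^1_1$-randomness, does \emph{not} imply $\omega_1^y=\om$ --- by Yu's result quoted in Section \ref{section random forcing}, random generics over $L_{\om}$, which avoid every null Borel set coded in $L_{\om}$, have $\omega_1^y>\om$; even preservation of admissibility needs randomness over $L_{\alpha+1}$ (Lemma \ref{preservation of admissibility by randoms}), cf.\ Lemma \ref{classical characterization of Pi11-randoms}. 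More importantly, $\lambda^y$ cannot be controlled ``locally at $\lambda$'' at all: by Theorem \ref{lambdazetasigma} it is characterized only through the lexically minimal triple with $L_{\lambda^y}[y]\prec_{\Sigma_1}L_{\zeta^y}[y]\prec_{\Sigma_2}L_{\Sigma^y}[y]$, so any argument for $\lambda^y\leq\lambda$ must first control $\zeta^y$ and $\Sigma^y$, which your plan never engages with. The paper obtains $\lambda^r=\lambda$, $\zeta^r=\zeta$, $\Sigma^r=\Sigma$ only for $r$ ``sufficiently random'', i.e.\ random over $L_\gamma$ for suitable $\gamma>\Sigma$ (with $\Sigma$ countable in $L_\gamma$), by proving that $\Sigma_1$- and $\Sigma_n$-elementarity between constructible levels persists to the quasi-generic extension (Lemmas \ref{Sigma1 reflection} and \ref{Sigma2 reflection}) and then using lexical minimality together with the fact that the three ordinals cannot decrease when passing to an oracle; there is no argument of the form ``random forcing over $L_\lambda$ adds no new writable ordinals'', and the paper's open question whether $\zeta^x=\zeta$ even for \ITTM-random $x$ shows how delicate such preservation claims are.

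The repair is to choose $r\in A$ random over $L_\gamma$ and $s\in A$ random over $L_\gamma[r]$ for such a $\gamma>\Sigma$ (possible since these sets are still co-null), derive $L_\lambda[r]\prec_{\Sigma_1}L_\zeta[r]\prec_{\Sigma_2}L_\Sigma[r]$ and likewise for $s$, conclude $\lambda^r=\lambda^s=\lambda$, and only then intersect: $(r,s)$ is side-by-side random over $L_{\Sigma+1}$ (the product/iteration identification is carried out over $L_\gamma$, where generics and quasi-generics coincide, precisely because it is not available over the small admissible levels themselves), so $x\in L_\lambda[r]\cap L_\lambda[s]=L_\lambda$ by Lemma \ref{intersection from mutually randoms}. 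Two minor points: $\lambda$ is in fact admissible (it is $\Sigma$ that is not), contrary to your closing remark, though this does not affect the applicability of the lemmas, which cover increasing unions of admissibles; and the identification of the quasi-generic extension with the relativized constructible hierarchy, which you use implicitly when quoting Welch's characterization, is itself one of the facts that must be proved (Lemmas \ref{generic extension is subset of L stage} and \ref{L stage is subset of generic extension}).
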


The proof rests on phenonema for infinite time computations that have no analogue in the context of Turing computability, in particular the  difference between writable, eventually writable and accidentally writable reals (see Definition \ref{definition: writable etc} or \cite{MR2493990}). 

We state some other main results. 
We obtain a variant for the stronger \emph{hypermachines} with $\Sigma_n$-limit rules \cite{FW} in Theorem \ref{variant for hypermachines}. 
We prove a variant of the previous theorem for recognizable sets.\footnote{An element $x$ of ${}^\omega 2$ is ITTM-recognizable if $\{x\}$ is ITTM-decidable (see Definition \ref{definition: recognizable}).} 
Thus we answer several questions posed in \cite[Section 5]{Ca5} and \cite[Section 6]{Randomness-and-degree-theory-for-ITRMs}. 

\begin{thm} \label{main theorem 1a} (Theorem \ref{recITTMsacks}) 
Suppose that $A$ is a subset of the Cantor space ${}^\omega2$ with $\mu(A)>0$ and a real $x$ is ITTM-recognizable from all elements of $A$. Then $x$ is ITTM-recognizable. 
\end{thm}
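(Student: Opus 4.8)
The plan is to run the measure-theoretic skeleton of the classical Sacks argument and of Theorem~\ref{writable reals from non-null sets}, but applied to the single yes/no query ``$y=x$?'' rather than to the bits of $x$ themselves; since a recognizable real need not be computable (lost melodies), the conclusion cannot be deduced from Theorem~\ref{writable reals from non-null sets} directly, but its techniques — especially the behaviour of random forcing over $L_{\lambda^y}[y]$ seen as an increasing union of admissible sets — will be reused. First, since there are only countably many ITTM programs, $\mu(A)>0$ yields a fixed program $P$ and a set $A'\subseteq A$ with $\mu(A')>0$ such that for every $z\in A'$ the machine $P^z$ decides $\{x\}$: $P^z(y)$ halts for all $y$, with output $1$ iff $y=x$ and output $0$ iff $y\neq x$. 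In particular $P^z(y)$ halts for every $z\in A'$ and every $y$, while for $z\notin A'$ nothing is assumed about $P^z$.

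Next, by the Lebesgue density theorem fix a cylinder $[s]$ with $s\in{}^{<\omega}2$ and $\mu(A'\cap[s])>\tfrac34\mu([s])$; this finite string is hard-wired into the machine to be constructed. For $y\in{}^\omega2$ put $W(y)=\{z\in[s]:P^z(y)\text{ halts with output }1\}$. If $y=x$ then $A'\cap[s]\subseteq W(y)$, so $\mu(W(y))>\tfrac34\mu([s])$; if $y\neq x$ then $W(y)$ is disjoint from $A'\cap[s]$, so $\mu(W(y))<\tfrac14\mu([s])$. Hence
\[
 y=x \quad\Longleftrightarrow\quad \mu\big(W(y)\big)>\tfrac12\mu([s]),
\]
with the two alternatives separated by a fixed gap, so the task reduces to building an ITTM that, with oracle $y$, decides this inequality \emph{and halts}.

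For this, write $W(y)=\bigcup_{\gamma}W^\gamma(y)$ as the increasing union of the sets $W^\gamma(y)=\{z\in[s]:P^z(y)\text{ halts by stage }\gamma\text{ with output }1\}$, and likewise introduce $V^\gamma(y)=\{z\in[s]:P^z(y)\text{ halts by stage }\gamma\text{ with output}\neq1\}$. Each $W^\gamma(y)$ and $V^\gamma(y)$ is Borel with a code that is uniformly ITTM-computable from $y$ and a notation for $\gamma$, and the Lebesgue measure of a Borel set is ITTM-computable from such a code; so the reals $a_\gamma:=\mu(W^\gamma(y))$ and $b_\gamma:=\mu([s])-\mu(V^\gamma(y))$ are uniformly $y$-writable for $\gamma<\lambda^y$ (where $\lambda^y$ is the supremum of the $y$-writable ordinals), with $a_\gamma$ increasing, $b_\gamma$ decreasing, and $a_\gamma\le\mu(W(y))\le b_\gamma$ because $W^\gamma(y)$ and $V^\gamma(y)$ are disjoint subsets of $[s]$. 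The proposed machine, on input $y$, computes $a_\gamma$ and $b_\gamma$ along the $y$-writable ordinals $\gamma$, halting with output $1$ as soon as some $a_\gamma>2^{-|s|-1}$ and with output $0$ as soon as some $b_\gamma<2^{-|s|-1}$; these events are mutually exclusive since $a_\gamma\le b_\gamma$. This is an unrelativized ITTM program apart from the hard-wired $s$, so any real it recognizes is ITTM-recognizable.

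The main obstacle is to show that this machine halts on every $y$ with the answer predicted by the displayed equivalence. A priori $P^z(y)$ is only guaranteed to halt below $\lambda^{y\oplus z}$, which can exceed $\lambda^y$, so it is not clear that $a_\gamma$ or $b_\gamma$ ever crosses the threshold along $y$-writable stages. The plan is to close this using the fact — to be isolated in the technical part of the paper, and also underlying Theorem~\ref{writable reals from non-null sets} — that $\lambda^y$ is preserved by random forcing over $L_{\lambda^y}[y]=\bigcup_{\gamma<\lambda^y}L_\gamma[y]$, viewed as an increasing union of admissible sets: for a conull set $C$ of oracles one has $\lambda^{y\oplus z}=\lambda^y$, hence every halting run $P^z(y)$ with $z\in C$ halts at a stage $<\lambda^y$. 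Since $\mu(A'\cap[s]\cap C)=\mu(A'\cap[s])>\tfrac34\mu([s])$ and, for $z\in A'\cap[s]\cap C$, the computation $P^z(y)$ halts below $\lambda^y$ with output $1$ if $y=x$ and $0$ if $y\neq x$, the set $\bigcup_{\gamma<\lambda^y}W^\gamma(y)$ has measure $>\tfrac34\mu([s])$ when $y=x$ (so some $a_\gamma$ with $\gamma<\lambda^y$ exceeds $2^{-|s|-1}$), while $\bigcup_{\gamma<\lambda^y}V^\gamma(y)$ has measure $>\tfrac34\mu([s])$ when $y\neq x$ (so some $b_\gamma$ with $\gamma<\lambda^y$ lies below $2^{-|s|-1}$); in either case the relevant $\gamma$ is $y$-writable, hence reached by the machine before stage $\lambda^y$, and the machine halts correctly. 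The delicate points are thus the forcing fact itself and the uniform ITTM-computability of Borel codes and measures from ordinal notations (together with the measurability of the semidecidable-in-$y$ sets $W(y)$), which I expect to be the real work; a hypermachine version should then follow by using the analogue of Theorem~\ref{variant for hypermachines} in place of Theorem~\ref{writable reals from non-null sets}.
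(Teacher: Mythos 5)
Your proposal is correct and follows the same overall strategy as the paper: reduce to a single program $P$ on a positive-measure set of oracles, use the Lebesgue density theorem to fix a cylinder where the good oracles have relative measure close to $1$, build a machine that accumulates measure-evidence about the outputs of $P^z$ on the candidate input until a fixed threshold is crossed, and verify correctness via the preservation of $\lambda$ under sufficiently random oracles (the relativized form of Lemma~\ref{Sigma2 reflection}, exactly as in the paper's proof). The genuine difference is in how the recognizing machine is implemented. The paper's machine enumerates $L_{\lambda^z}[z]$ and searches for pairs $(p,\dot{C})$ such that $p$ quasi-forces that $\dot{C}$ is a halting computation of $P$ with the random oracle, exploiting the $\Delta_1$-definability (hence ITTM-decidability) of the quasi-forcing relation for $\Delta_0$-statements, and then sums the measures of the conditions found; this keeps the argument inside the random-forcing framework of Section~\ref{section random forcing} and is what the paper flags as its ``new observation'' (avoiding the computation of generics by working with the forcing relation). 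Your machine instead avoids the forcing relation altogether at the level of the algorithm: it computes Borel codes and measures of the stage-bounded halting sets $W^\gamma(y)$, $V^\gamma(y)$ along the $y$-writable ordinals, in the style of the paper's Lemma~\ref{assign null covers}, and uses forcing/preservation only in the verification, to see that for a conull set of oracles $z$ one has $\lambda^{y\oplus z}=\lambda^y$, so that the threshold is crossed at a $y$-writable stage. Your route is somewhat more elementary on the machine side (only Borel-code and measure computations are needed), while the paper's route makes the forcing relation do the bookkeeping and fits the general method it develops; both hinge on the same preservation lemma, and your monotonicity argument ($a_\gamma\le b_{\gamma'}$ for all $\gamma,\gamma'$) correctly guarantees that the two halting events cannot conflict.

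One small point you pass over: the very first step, extracting a single program $P$ and a set $A'\subseteq A$ of positive measure on which $P^z$ decides $\{x\}$, needs the sets $\{z\in A\mid P^z\text{ recognizes }x\}$ to be Lebesgue measurable (or an outer-measure workaround combined with a density theorem for outer measure). The paper justifies this by noting that the set of oracles recognizing $x$ via a fixed program is absolutely $\Delta^1_2$ and hence measurable; you flag measurability only for the sections $W(y)$, which in fact your argument can do without, whereas the measurability of $A'$ is the one you actually need for the density step and the threshold estimates.
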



The next result characterizes ITTM-randomness by the values of an ordinal $\Sigma$ that is associated to ITTM-computations, the supremum of the ordinals coded by accidentally writable reals, i.e. reals that can be written on the tape at some time in some computation. 

\begin{thm} 
(Theorem \ref{characterization of ITTM-random}) 
A real $x$ is \ITTM-random if and only if it is random over $L_\Sigma$ and $\Sigma^x=\Sigma$. 
\end{thm}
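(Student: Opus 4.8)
The plan is to identify the complement of the class of \ITTM-random reals with $\mathcal{U}\cup\mathcal{D}$, where $\mathcal{U}=\{y:y\text{ is not random over }L_\Sigma\}$ and $\mathcal{D}=\{y:\Sigma^y>\Sigma\}$, and to establish this by showing that $\mathcal{U}$ and $\mathcal{D}$ are each semidecidable null sets (which yields one inclusion) while, conversely, a real that is random over $L_\Sigma$ and satisfies $\Sigma^x=\Sigma$ avoids every semidecidable null set (the other inclusion).

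For the implication from right to left, let $x$ be random over $L_\Sigma$ with $\Sigma^x=\Sigma$, and let $N$ be a semidecidable null set, semidecided by a program $P$, so that $y\in N$ exactly when $P^y$ halts. If $x\in N$, the halting time $\beta$ of $P^x$ satisfies $\beta<\lambda^x\le\Sigma^x=\Sigma$. Then $N_\beta=\{y:P^y\text{ halts by stage }\beta\}$ is a Borel set contained in $N$, hence null; it contains $x$; and it has a Borel code definable from $\beta$ and $P$, which therefore lies in $L_\Sigma$ because $\Sigma$ is a limit of admissible ordinals and so is closed under the operations involved. This contradicts randomness of $x$ over $L_\Sigma$. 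Hence $x\notin N$, and as $N$ was arbitrary, $x$ is \ITTM-random.

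For the implication from left to right it is enough to verify that $\mathcal{U}$ and $\mathcal{D}$ are semidecidable null sets, since then an \ITTM-random real avoids both, hence is random over $L_\Sigma$ and, as $\Sigma^x\ge\Sigma$ always holds, satisfies $\Sigma^x=\Sigma$. Nullity of $\mathcal{U}$ is clear: $L_\Sigma$ is countable, hence contains only countably many Borel codes, and $\mathcal{U}$ is the union of the corresponding Borel null sets (the relevant instances of ``$c$ codes a null Borel set'' being absolute between $L_\Sigma$ and $V$). For semidecidability of $\mathcal{U}$ one uses Welch's characterization of the accidentally writable reals as exactly the reals of $L_\Sigma$: on input $y$, run the universal \ITTM-computation and dovetail, over the reals $c$ that successively occupy a designated portion of its tape, a subcomputation that halts once it has certified that $c$ is a Borel code with $\mu(B_c)=0$ and $y\in B_c$; each of these is \ITTM-decidable from $c$ and $y$, and the whole procedure halts iff $y\in\mathcal{U}$. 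For $\mathcal{D}$, observe first that, since the ordinals coded by reals accidentally writable relative to $y$ form the initial segment $[0,\Sigma^y)$, we have $\mathcal{D}=\{y:\Sigma\text{ is accidentally writable relative to }y\}$; semidecidability of $\mathcal{D}$ then follows by running the universal $y$-computation and halting as soon as it produces a code for an ordinal that the eventual looping of the parameter-free universal computation --- the $\lambda$-$\zeta$-$\Sigma$ pattern of Welch --- certifies to be at least $\Sigma$.

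The main obstacle is the nullity of $\mathcal{D}$. One cannot obtain it the way one obtains nullity of $\mathcal{U}$, because --- and this is precisely why the hypothesis $\Sigma^x=\Sigma$ appears separately in the statement --- a real random over $L_\Sigma$ need not satisfy $\Sigma^x=\Sigma$; this is one of the advertised differences from the hyperarithmetic setting, where the analogue of $\mathcal{D}$ is contained in the analogue of $\mathcal{U}$. Instead one argues directly, via the analysis of random forcing over admissible sets and over increasing unions of admissible sets, with $L_\Sigma$ presented as such a union: for measure-one many $x$, no $x$-computation ever writes a code for the particular ordinal $\Sigma$, so $\mathcal{D}$ is null. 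The remaining points needing care are the verification that the Borel codes arising in the right-to-left argument really lie in $L_\Sigma$, and the uniformity of the dovetailing machine constructions used above.
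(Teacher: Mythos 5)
Your proof is correct and follows essentially the same route as the paper: the same decomposition into two semidecidable null sets (the non-randoms over $L_\Sigma$, handled by enumerating accidentally writable Borel codes, and $\{y\mid \Sigma^y>\Sigma\}$, whose nullity comes from the random-forcing preservation of $\Sigma$ and whose semidecidability from searching $y$-accidentally writable witnesses for $\Sigma$), and for the converse the observation that a halting computation on $x$ halts at some stage $\beta<\lambda^x<\Sigma^x=\Sigma$, which the paper packages via the quasi-forcing Boolean values while you instead build a Borel code in $L_\Sigma$ for the time-$\beta$ approximation directly (the same device as in the paper's construction of $\hat{S}$), a cosmetic difference. The only inaccuracy is your motivational aside about the hyperarithmetic setting: there $\{x\mid \omega_1^x>\om\}$ is \emph{not} contained in the union of the null $\Delta^1_1$ sets (otherwise $\Pi^1_1$- and $\Delta^1_1$-randomness would coincide), so the present theorem is parallel to, rather than divergent from, the classical characterization.
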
 

The following is a desirable property of randomness that holds for $\Pi^1_1$-randomness, but not for Martin-L\"of randomness. The property states that mutual randoms do not share non-computable information. Here, two reals are considered random if their join is random. 

\begin{thm} \label{computable from mutual ITTM-randoms} (Theorem \ref{computable from mutual ITTM-randoms})
If $x$ is computable from both $y$ and $z$, and $y$ and $z$ are mutual \ITTM-randoms, then $x$ is computable. 
\end{thm}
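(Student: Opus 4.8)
The plan is to exploit the characterization of ITTM-randomness from Theorem \ref{characterization of ITTM-random} together with van Lambalgen's theorem for ITTM-random reals, and then push everything through the Sacks-type argument of Theorem \ref{main theorem 1}. So suppose $x$ is ITTM-computable from both $y$ and $z$, where $y \oplus z$ is ITTM-random. By van Lambalgen's theorem (which the excerpt asserts holds for all three notions), $y$ is ITTM-random and $z$ is ITTM-random relative to $y$; in particular the relevant ordinal parameters are controlled, i.e. $\Sigma^y = \Sigma$ and, working over the oracle $y$, the real $z$ is random over $L_{\Sigma^y}[y] = L_\Sigma[y]$ with $\Sigma^{y \oplus z} = \Sigma^y = \Sigma$. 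The goal is to show $x \in L_\Sigma$ and that $x$ is actually writable (boldface-free), which by the standard facts about writable reals means $x$ is ITTM-computable.

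First I would argue that $x \in L_\Sigma$. Since $x \leq_{\mathrm{ITTM}} y$, the real $x$ is writable from $y$, hence $x \in L_{\lambda^y}[y] \subseteq L_\Sigma[y]$ where $\lambda^y$ is the supremum of $y$-writable ordinals. Symmetrically $x \in L_{\lambda^z}[z] \subseteq L_\Sigma[z]$. Now I want to intersect: the key point is that $L_\Sigma[y] \cap L_\Sigma[z] = L_\Sigma$ when $z$ is sufficiently generic over $L_\Sigma[y]$. This is exactly where the ``random forcing over admissible sets and increasing unions of admissible sets'' results advertised in the abstract come in: $L_\Sigma$ is an increasing union of admissible sets (the $L_{\zeta}$ for accidentally writable codes), random forcing over such a structure is well-behaved, and a mutual-genericity / product-forcing argument gives that any set of ordinals — in particular $x$, coded as a subset of $\omega$ — lying in both generic extensions already lies in the ground model $L_\Sigma$. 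Concretely, one uses that $z$ is random over $L_\Sigma[y]$ (from van Lambalgen) to conclude $L_\Sigma[y][z]$ is a random extension, inside which $x \in L_\Sigma[y]$ and $x \in L_\Sigma[z]$ force $x \in L_\Sigma$ by mutual genericity.

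Having placed $x \in L_\Sigma$, it remains to upgrade ``$x \in L_\Sigma$'' to ``$x$ is writable.'' Here I would invoke Theorem \ref{main theorem 1} directly. Consider the set $A$ of reals $w$ such that $y \oplus w$ is ITTM-random; since $z$ witnesses $A \neq \emptyset$ and, by van Lambalgen, $A$ is (up to a null set) the set of reals ITTM-random relative to $y$, we have $\mu(A) > 0$ — indeed $\mu(A) = 1$, as the non-randoms form a null set. For every $w \in A$, the pair $(y,w)$ is interchangeable with $(y,z)$ in the hypothesis: $x$ is ITTM-computable from $y$ and from $w$ (the latter because, by the $\in L_\Sigma$ analysis applied with $w$ in place of $z$, or more simply because the computation of $x$ from the first coordinate does not depend on the second), so $x$ is ITTM-computable relative to $y$ from every element of a measure-one set. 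Applying the relativized form of Theorem \ref{main theorem 1} over the oracle $y$ yields that $x$ is ITTM-computable from $y$ without further oracle — but we already knew that. The genuinely new input is to instead run Theorem \ref{main theorem 1} with the roles arranged so that $y$ itself varies: fix that $x \leq_{\mathrm{ITTM}} z$ and consider $B = \{ w : w \oplus z \text{ is ITTM-random}\}$, a measure-one set each of whose elements computes $x$; Theorem \ref{main theorem 1} then gives $x$ ITTM-computable outright.

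I expect the main obstacle to be the forcing-theoretic step: verifying that random forcing over $L_\Sigma$ (an increasing union of admissibles, \emph{not} itself admissible) has the product/mutual-genericity property that $L_\Sigma[y] \cap L_\Sigma[z] = L_\Sigma$, and in particular that van Lambalgen's theorem in the ITTM setting really does deliver ``$z$ random over $L_\Sigma[y]$ with the ordinal $\Sigma$ preserved.'' The preservation of $\Sigma$ (equivalently, that random reals do not code new accidentally writable ordinals) is the crux, and it is presumably exactly what the technical forcing lemmas mentioned in the abstract are designed to supply; the rest is assembling Theorems \ref{main theorem 1} and \ref{characterization of ITTM-random} around it. A secondary point to be careful about is that ITTM-computability from an oracle is not symmetric in the two halves of a join, so one must phrase the application of Theorem \ref{main theorem 1} with the correct half held fixed.
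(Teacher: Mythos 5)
Your argument breaks at the final, crucial step. You take the conull set $B=\{w \mid w\oplus z \text{ is \ITTM-random}\}$ (and earlier the analogous set with $y$ fixed) and assert that every element of this measure-one set computes $x$, so that Theorem \ref{main theorem 1} applies. No justification is given, and none is possible: the hypothesis only provides that the two particular reals $y$ and $z$ compute $x$, and if $x$ is not computable then, by Theorem \ref{main theorem 1} itself, the set $\{w \mid x\leq_w w\}$ is null and so cannot contain $B$ -- the step is circular. The parenthetical ``the computation of $x$ from the first coordinate does not depend on the second'' only shows that $x$ is computable from $y\oplus w$, not from $w$ alone. Thus the essential input to Theorem \ref{main theorem 1}, namely a positive-measure set each of whose members computes $x$, is never produced. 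The paper manufactures exactly this set in one line: fixing programs with $P(y)=Q(z)=x$, it considers $A=\{u\mid P(u)=Q(z)\}$, which is semidecidable relative to $z$ and contains $y$; if $\mu(A)=0$, then $y\notin A$ because $y$ is \ITTM-random relative to $z$ (by van Lambalgen, Lemma \ref{hardITTMvanLambalgen}), a contradiction, and if $\mu(A)>0$, then every $u\in A$ computes $x$ via $P$, so Theorem \ref{ITTMSacks} concludes.

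Your first half is essentially sound and could be completed into a correct alternative proof, but you should work at $\lambda$ rather than $\Sigma$: by van Lambalgen and Theorem \ref{characterization of ITTM-random}, $\lambda^y=\lambda^z=\lambda$, so computability of $x$ from $y$ and from $z$ places $x\in L_\lambda[y]\cap L_\lambda[z]$ by Theorem \ref{lambdazetasigma}; since $y\oplus z$ is random over $L_\Sigma$, the pair $(y,z)$ is in particular side-by-side random over $L_\lambda$, and Lemma \ref{intersection from mutually randoms} then yields $x\in L_\lambda$, i.e.\ $x$ is writable. Landing only in $L_\Sigma$, as your intersection at $\Sigma$ does, gives merely that $x$ is accidentally writable, which is what forced you into the flawed measure-theoretic upgrade in the first place.
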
 

We further analyze a decidable variant of ITTM-random that is analogous to $\Delta^1_1$-random.
We characterize this notion in Theorem \ref{characterization of decidable random} and prove an analogue to Theorem \ref{computable from mutual ITTM-randoms} and to van Lambalgen's theorem for this variant.  

All results in this paper, except for the Martin-L\"of variant in Section \ref{subsection ML}, work for Cohen reals instead of random reals, often with much simpler proofs, which we do not state explicitly. 

The main tool is a variant of random forcing suitable for models of weak set theories such as Kripke-Platek set theory. 
Previously, some results were formulated for the ideal of meager sets instead of the ideal of measure null sets, since the proofs use Cohen forcing and this is a set forcing in such models. 
Random forcing, on the other hand, is a class forcing in this contexts, 
and it is worthwhile to note that random generic is different from random over these models (see \cite{MR2860186}). 
These difficulties are overcome through an alternative definition of the forcing relation, which we call the \emph{quasi-forcing relation}. 

As a by-product, the analysis of random forcing allows more efficient proofs of classical results of higher recursion theory, such as Sacks' theorem that  $\{x\mid \omega_1^x>\om\}$ is a null set, 
although the quasi-generics used here are quite difference from generics used in forcing 
(see \cite[Remark after Theorem 6.6]{MR2860186}). 

We assume some familiarity with infinite time Turing machines (see \cite{MR1771072}), randomness (see \cite{MR2548883}) and admissible sets (see \cite{MR0424560}). 
In Section \ref{subsection ML} we will refer to some proofs in \cite[Section 3]{MR2340241} and \cite[Section 3]{Continuous-higher-randomness}. 
Moreover, we frequently use the Gandy-Spector theorem to represent $\Pi^1_1$ sets (see \cite[Theorem 5.5]{Hjorth-Vienna-notes-on-descriptive-set-theory}). 

The paper is structured as follows.  
In Section \ref{section random forcing}, we discuss random forcing over admissible sets and limits of admissible sets. 
In Section \ref{section ITTMs}, we prove results about infinite time Turing machines and computations from non-null sets. This includes the main theorem. 
In Section \ref{section random reals}, we use the previous results to prove desirable properties of randomness notions. 

We would like to thank Laurent Bienvenu for allowing us to include joint results with the first author on ITTM-genericity in Section \ref{subsection ITTM decidable random}. 
Moreover, we would like to thank Andre Nies, Philip Welch and Liang Yu for discussions related to the topic of this paper. 

\section{Random forcing over admissible sets} \label{section random forcing}

In this section, we present some results about random forcing over admissible sets and unions of admissible sets that are of independent interest. This is essential for the following proofs. The results simplify the approach to forcing over admissible sets (see \cite{MR1080970}) by avoiding a ranked forcing language. 

We first fix some (mostly standard) notation. 
A \emph{real} is a set of natural numbers or an element of the Cantor space ${}^\omega 2$. 
The basic open subsets of the Cantor space ${}^\omega 2$ will be denoted by $U_s=\{x\in {}^\omega 2\mid s\subseteq x\}$ for $s\in {}^{<\omega}2$. 
The Lebesgue measure on $2^\omega$ is the unique Borel measure $\mu$ with size $\mu(N_t)=2^{-|t|}$ for all $t\in 2^{<\omega}$. 
An \emph{admissible set} is a transitive set which satisfies Kripke-Platek set theory with the axiom of infinity. Moreover, an ordinal $\alpha$ is called admissible if $L_\alpha$ is admissible.

\subsection{The quasi-forcing relation} 

We work with the following version of random forcing. 
If $T$ is a subtree $T$ of ${}^{<\omega}2$, i.e. a downwards closed subset, let 
$$[T]= \{ x\in {}^\omega 2\mid \forall n\ x\upharpoonright n \in T\}$$ 
denote the set of (cofinal) branches of $T$. 
A \emph{perfect subtree} of $2^{<\omega}$ is a subtree without end nodes and cofinally many splitting nodes. 
We define random forcing as the set of perfect subtrees $T$ of ${}^{<\omega}2$ with $\mu([T])>0$, partially ordered by reverse incusion. 
Note that it can be easily shown (but will not be used here), for random forcing in any admissible set, that this partial order is dense in the set of Borel subsets $A$ of ${}^\omega 2$ (given by Borel codes) with $\mu(A)>0$. 
Note that random forcing is, in general, a class forcing over admissible sets, and this is the reason why we will need the following results. 

\begin{definition} 
Suppose that $\alpha$ is an ordinal and $x\in {}^\omega 2$. 
Then $x$ is \emph{random} over $L_\alpha$ if $x\in A$ for every Borel set $A$ with a Borel code in $L_\alpha$. 
\end{definition} 

We distinguish between the forcing relation for random forcing over an admissible and the \emph{quasi-forcing relation} defined below. 
In the definition of the quasi-forcing relation, the condition that a set is dense is replaced with the condition that the union of the conditions has full measure. 
Hence the quasi-forcing relation corresponds to a random real, i.e. a real which is a member of a class of definable sets of measure $1$, for instance all $\Pi^1_1$ sets of measure $1$. Such reals are sometimes called quasi-generics (see \cite{MR2601017}). 

This contrasts the notion of random generics in the sense of forcing. 
The following example shows that these two notions are different. 
Given any $n\geq 1$, we construct a dense $\Pi^1_1$ subset $A$ of the random forcing in $L_{\om}$ with $\mu(\bigcup A )<\frac{1}{n}$. 
Suppose that a $\Sigma_1$-definable enumeration $\langle B_\alpha\mid \alpha<\om\rangle$ of the Borel codes in $L_{\omega_1^\mathrm{CK}}$ for all Borel sets with positive measure and codes in $L_{\omega_1^\mathrm{CK}}$ given. We will use the same notation for a set and its code. 
Moreover, suppose that a partial surjection $f\colon \omega\rightarrow \om$ is given that is $\Sigma_1$-definable over 
$L_{\om}$. 
We define a sequence of Borel sets $A_\alpha\subseteq B_\alpha$ with $0<\mu(A_\alpha)<2^{-(i+n+1)}$, 
where $i$ is least such that $f(i)=\alpha$. 
Then  $A=\bigcup_{\alpha<\om} A_\alpha$ is a $\Pi^1_1$ set by the Gandy-Spector theorem \cite[Theorem 5.5]{Hjorth-Vienna-notes-on-descriptive-set-theory}. 
The difference is illustrated even better by Liang Yu's result \cite{MR2860186} that $\omega_1^x>\om$ for any random generic $x$ over $L_{\om}$. 
Together with a classical result (for a proof, see Lemma \ref{classical characterization of Pi11-randoms} below) shows that no random generic over $L_{\om}$ avoids every $\Pi^1_1$ null set. 

Moreover, it is shown below that the quasi-forcing relation for $\Delta_0$-formulas over admissible sets is definable, 
while we do not know if this holds for the forcing relation. 

%


We now define Boolean values for the quasi-forcing relation for random forcing. 
An $\infty$-Borel code is a set of ordinals that codes a set built from basic open subsets of ${}^{\omega}2$ and their complements by forming intersections and unions of any ordinal length. 
We will write $\bigvee_{i\in I} x_i$ for the canonical code for the union of the sets coded by $x_i$ for $i\in I$, and similarly for $\bigwedge_{i\in I} x_i$ and $\neg x$. 

\begin{definition} 
Suppose that $L_\alpha$ is admissible or an increasing union of admissible sets. 
We define $\llbracket \varphi(\sigma_0,\dots, \sigma_n) \rrbracket$ by induction in $L_\alpha$, where $\sigma_0,\dots, \sigma_n\in L_\alpha$ are names for random forcing and $\varphi(x_0, \dots, x_n)$ is a formula. 
\begin{enumerate-(a)} 
\item 
$\llbracket \sigma\in \tau\rrbracket=\bigvee_{(\nu,p)\in \tau} \llbracket \sigma=\nu\rrbracket \wedge p$. 
\item 
$\llbracket \sigma = \tau\rrbracket=(\bigwedge_{(\nu,p)\in \sigma} \llbracket \nu\in\tau\rrbracket \wedge p)\wedge
(\bigwedge_{(\nu,p)\in \tau} \llbracket \nu\in\sigma\rrbracket \wedge p)$. 
\item 
$\llbracket \exists x\in \sigma_0\ \varphi(x,\sigma_0,\dots, \sigma_n)\rrbracket= \bigvee_{(\nu,p)\in \sigma_0} \llbracket \varphi(\nu,\sigma_0,\dots, \sigma_n)\rrbracket \wedge p$. 
\item 
$\llbracket \neg \varphi(\sigma_0,\dots, \sigma_n)\rrbracket=\neg \llbracket \varphi(\sigma_0,\dots, \sigma_n)\rrbracket$. 
\item 
$\llbracket \exists x\ \varphi(x,\tau)\rrbracket=\bigcup_{\sigma\in L_\alpha} \llbracket \varphi(\sigma,\tau)\rrbracket$. 
\end{enumerate-(a)} 
\end{definition} 

We will identify $\llbracket \varphi(\sigma_0,\dots, \sigma_n)\rrbracket$ with the subset of ${}^\omega 2$ that it codes. 
This quasi-forcing relation is defined as follows. 

\begin{definition} 
Suppose that $\alpha$ is admissible or a limit of admissibles, $p$ a random condition in $L_\alpha$, $\varphi(x_0,\dots, x_n)$ formula and $\sigma_0,\dots ,\sigma_n$ random names in $L_\alpha$. 
We define $p\Vdash^{L_\alpha} \varphi(\sigma_0,\dots, \sigma_n)$ if $\mu([p]\setminus \llbracket \varphi(\sigma_0, \dots, \sigma_n)\rrbracket)=0$. 
\end{definition} 

\begin{lemma} 
Suppose that $\alpha$ is admissible or a limit of admissibles. 
Then the function which associates a Boolean value to $\Delta_0$-formulas $\varphi(\sigma_0,\dots ,\sigma_n)$ and the forcing relation for random forcing are $\Delta_1$-definable over $L_\alpha$.  
\end{lemma}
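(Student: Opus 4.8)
The plan is to argue by simultaneous induction on the complexity of the formula $\varphi$ (equivalently, on the rank of the names involved, for the atomic and bounded cases), showing that the map $(\varphi,\sigma_0,\dots,\sigma_n)\mapsto \llbracket\varphi(\sigma_0,\dots,\sigma_n)\rrbracket$ taking values among $\infty$-Borel codes is $\Sigma_1$-definable over $L_\alpha$, and then deriving $\Delta_1$-definability of the forcing relation from the $\Sigma_1$-definability of this map together with the fact that $L_\alpha$ can recognize Lebesgue-measure-zero statements about definable $\infty$-Borel sets. First I would observe that the recursion in the definition of $\llbracket\cdot\rrbracket$ for the $\Delta_0$ fragment, clauses (a)--(d), only ever refers to Boolean values of formulas of strictly lower rank and to pairs $(\nu,p)$ appearing in the names $\sigma_i$, so the recursion is a $\Sigma_1$-recursion along the membership relation restricted to the transitive closures of the $\sigma_i$; by $\Sigma_1$-recursion in admissible sets (and its standard extension to increasing unions of admissibles, using that each such formula and name lies in some member $L_\beta$ of the union), the graph of $\llbracket\cdot\rrbracket$ restricted to $\Delta_0$-formulas is $\Sigma_1$-definable over $L_\alpha$. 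The key admissibility point is $\Sigma_1$-Replacement: in clause (a) the index set $\{(\nu,p)\in\tau\}$ is a set in $L_\alpha$, and collecting the codes $\llbracket\sigma=\nu\rrbracket\wedge p$ over it yields a set in $L_\alpha$, so the canonical $\infty$-Borel code $\bigvee$ of that set is again an element of $L_\alpha$; the same applies to clauses (b) and (c), while clause (d) is just $\neg$ applied to an already-constructed code. Thus for every $\Delta_0$ formula $\varphi$ and names $\sigma_0,\dots,\sigma_n\in L_\alpha$ the code $\llbracket\varphi(\sigma_0,\dots,\sigma_n)\rrbracket$ is an element of $L_\alpha$, and the function sending the tuple to this element is $\Sigma_1$ over $L_\alpha$ — hence $\Delta_1$, since its domain is definable and its values are unique.

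Next I would pass from the Boolean value to the forcing relation. By definition $p\Vdash^{L_\alpha}\varphi(\sigma_0,\dots,\sigma_n)$ iff $\mu([p]\setminus\llbracket\varphi(\sigma_0,\dots,\sigma_n)\rrbracket)=0$. Given a fixed $\infty$-Borel code $c\in L_\alpha$ for a set $B$ and a random condition $p\in L_\alpha$, the statement $\mu([p]\setminus B)=0$ should be shown to be absolute between $L_\alpha$ and $V$, and moreover both $\Sigma_1$- and $\Pi_1$-expressible over $L_\alpha$. The natural route: $[p]\setminus B$ is coded by $c'=p\wedge\neg c\in L_\alpha$ (identifying $[p]$ with the closed set it codes), and one uses the standard fact that for an $\infty$-Borel code $c'$ living in an admissible $L_\alpha$, ``$\mu$ of the coded set is $0$'' is equivalent to a $\Sigma_1$ statement — e.g. the existence, in $L_\alpha$, of a function assigning to each rational $\varepsilon>0$ an open cover by basic clopen sets of total measure $<\varepsilon$, obtained by the usual inside-out measure computation on the code — and also to a $\Pi_1$ statement asserting that every approximation from below stays below every positive rational. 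Here admissibility ($\Sigma_1$-Replacement and $\Delta_0$-Separation) is what lets $L_\alpha$ carry out the transfinite computation of outer/inner measure approximants along the well-founded code $c'$ and collect the results. Combining, $p\Vdash^{L_\alpha}\varphi$ is $\Delta_1$.

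The main obstacle I anticipate is the second step rather than the first: making precise and verifying that ``$\mu(\text{coded set})=0$'' is simultaneously $\Sigma_1$ and $\Pi_1$ over an admissible (or limit-of-admissibles) $L_\alpha$ for an $\infty$-Borel code of possibly uncountable length living in $L_\alpha$, and that this computation is absolute to $V$. One has to check that the transfinite recursion computing the measure approximants of the code respects $\Sigma_1$-Replacement at the relevant union and unbounded-union nodes, and that the code, though of ordinal length, has all its constituent unions/intersections indexed by genuine sets of $L_\alpha$ so that the recursion does not leave the model; the limit-of-admissibles case needs the extra remark that any particular formula, name-tuple, and the resulting code already appear at some initial admissible stage, so the analysis reduces to the admissible case uniformly. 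The atomic/bounded recursion of the first step and the bookkeeping of $\infty$-Borel codes are then routine once the measure-zero predicate is pinned down as $\Delta_1$.
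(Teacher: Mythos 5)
Your proposal is correct and follows essentially the same route as the paper's (very brief) proof: the Boolean values for $\Delta_0$-formulas are obtained by a $\Sigma_1$-recursion (hence $\Delta_1$ as a total function on an admissible or limit-of-admissibles $L_\alpha$), the measure associated to an $\infty$-Borel code is likewise computed by a $\Delta_1$-recursion along the code, and combining the two gives $\Delta_1$-definability of the quasi-forcing relation. Your more detailed treatment of the measure-zero predicate (as both $\Sigma_1$ and $\Pi_1$) is just an unpacking of the paper's statement that the measure function itself is $\Delta_1$-definable, so no substantive difference.
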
 
\begin{proof} 
The Boolean values are defined by a $\Delta_1$-recursion and the measure corresponding to a code is definable by a $\Delta_1$-recursion. 
This implies that the forcing relation is $\Delta_1$-definable. 
\end{proof} 

\begin{definition} 
Suppose that $\alpha$ is an ordinal and $x\in {}^\omega 2$. 
We define $\sigma^x=\{\nu^x\mid (\nu,p)\in \sigma,\ x\in [p]\}$ for $\sigma\in L_\alpha$ by induction on the rank. 
\begin{enumerate-(a)} 
\item 
The generic extension of $L_\alpha$ by $x$ is defined as $L_\alpha[x]=\{\sigma^x\mid \sigma\in L_\alpha\}$. 
\item 
The $\alpha$th level of the $L$-hierarchy built over $x$, with $L_0[x]=\mathrm{tc}(\{x\})$, is denoted by $L_\alpha^x$. 
\end{enumerate-(a)} 
\end{definition} 

We will show in Lemma \ref{generic extension is subset of L stage} and Lemma \ref{L stage is subset of generic extension} that the sets $L_\alpha[x]$ and $L_\alpha^x$ are equal if $x$ is random over $L_\alpha$ and $\alpha$ is admissible or a limit of admissibles. 

\begin{lemma} \label{application of boolean values} 
Suppose that $L_\alpha$ is admissible or an increasing union of admissible sets, $\sigma_0,\dots, \sigma_n\in L_\alpha$ are names for random forcing, $\varphi(x_0, \dots, x_n)$ is a $\Delta_0$-formula and $x$ is random over $L_\alpha$. Then 
$$L_\alpha[x]\vDash \varphi(\sigma_0^x,\dots, \sigma_n^x)\Longleftrightarrow x\in \llbracket \varphi(\sigma_0,\dots, \sigma_n)\rrbracket.$$ 
Moreover, this holds for all formulas if $\alpha$ is countable in $L_\beta$ and $x$ is random over $L_\beta$. 
\end{lemma}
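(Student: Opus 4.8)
The plan is to prove a forcing-style truth lemma by induction on the complexity of $\varphi$, with a subsidiary induction on the ranks of the names in the two atomic cases. I will use throughout that, by the definition of $L_\alpha[x]$, every element of $L_\alpha[x]$ is of the form $\sigma^x$ for some $\sigma\in L_\alpha$, that $\tau^x=\{\nu^x\mid(\nu,p)\in\tau,\ x\in[p]\}$, and that each Boolean value $\llbracket\psi\rrbracket$ is identified with the set of reals it codes.

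First I would handle the atomic formulas $\sigma\in\tau$ and $\sigma=\tau$ by simultaneous induction on, say, the natural sum of the ranks of the two names. For $\sigma\in\tau$: unfolding the definition of $\tau^x$, we have $\sigma^x\in\tau^x$ iff there is $(\nu,p)\in\tau$ with $x\in[p]$ and $\sigma^x=\nu^x$; by the inductive hypothesis for equality at strictly smaller rank this is equivalent to there being such a $(\nu,p)$ with $x\in[p]$ and $x\in\llbracket\sigma=\nu\rrbracket$, i.e.\ to $x\in\bigvee_{(\nu,p)\in\tau}\bigl(\llbracket\sigma=\nu\rrbracket\wedge p\bigr)=\llbracket\sigma\in\tau\rrbracket$ via clause~(a). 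For $\sigma=\tau$ one uses extensionality of the transitive set $L_\alpha[x]$ together with the inductive hypothesis for membership, and matches the resulting condition with clause~(b). The only sets being decided here are the closed sets $[p]$, and $x\in[p]$ is an absolute fact about $x$. Negation is then immediate from clause~(d), since $x\in\neg\llbracket\psi\rrbracket$ iff $x\notin\llbracket\psi\rrbracket$, and finite conjunctions and disjunctions (arising when $\Delta_0$ formulas are built from bounded quantifiers via $\neg$) are the same; for a bounded existential quantifier, clause~(c) gives that $L_\alpha[x]\vDash\exists y\in\sigma_0^x\ \psi(y,\sigma_0^x,\dots,\sigma_n^x)$ iff some $(\nu,p)\in\sigma_0$ has $x\in[p]$ and $L_\alpha[x]\vDash\psi(\nu^x,\sigma_0^x,\dots,\sigma_n^x)$, which by the inductive hypothesis is $x\in\bigvee_{(\nu,p)\in\sigma_0}\bigl(\llbracket\psi(\nu,\sigma_0,\dots,\sigma_n)\rrbracket\wedge p\bigr)$. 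Bounded universal quantifiers follow by combining this with negation. In this $\Delta_0$ case the randomness of $x$ over $L_\alpha$ is used only to know that the Boolean values, which are sets of $L_\alpha$ by the previous lemma, are decided correctly by $x$, and to secure that $L_\alpha[x]$ is the intended object $L_\alpha^x$.

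For the ``moreover'', the extra hypothesis is exactly what is needed to make sense of clause~(e) for arbitrary formulas: over $L_\alpha$ alone that clause is a class operation, so the $\llbracket\varphi\rrbracket$ need not be sets of $L_\alpha$. If $\alpha$ is countable in the admissible (or limit-of-admissible) $L_\beta$, then $L_\alpha\in L_\beta$ is countable there, so every union and intersection occurring in the recursion for $\llbracket\varphi\rrbracket$---including the one in clause~(e), which ranges over the countable set $L_\alpha$, and the possibly long joins and meets in clauses~(a)--(c), which by the countable chain condition of random forcing agree modulo a null set with joins and meets of countable length---can be taken of countable length in $L_\beta$. Hence each $\llbracket\varphi\rrbracket$ is, up to a null set coded in $L_\beta$, an honest Borel code in $L_\beta$, the recursion is legitimate, and membership $x\in\llbracket\varphi\rrbracket$ is governed by the randomness of $x$ over $L_\beta$. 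The induction then extends to the unbounded existential quantifier: $L_\alpha[x]\vDash\exists y\ \varphi(y,\sigma_0^x,\dots,\sigma_n^x)$ iff some $\sigma\in L_\alpha$ has $L_\alpha[x]\vDash\varphi(\sigma^x,\sigma_0^x,\dots,\sigma_n^x)$---because every element of $L_\alpha[x]$ is some $\sigma^x$---which by the inductive hypothesis is $x\in\bigcup_{\sigma\in L_\alpha}\llbracket\varphi(\sigma,\sigma_0,\dots,\sigma_n)\rrbracket=\llbracket\exists y\ \varphi\rrbracket$.

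I expect the main obstacle to be the bookkeeping of the two interleaved inductions in the atomic case---in particular, ensuring that the rank of the relevant name pair genuinely decreases each time one passes between the $\in$- and $=$-clauses and that the inductive hypothesis is invoked only at strictly smaller rank---together with, for the last part, checking carefully that the non-$\Delta_0$ Boolean values computed inside $L_\beta$ really reduce to Borel codes there, so that ``random over $L_\beta$'' genuinely suffices to decide them.
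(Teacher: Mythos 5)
Your proposal is correct and follows essentially the same route as the paper, whose proof is simply the indicated double induction on the ranks of names (for the atomic clauses) and on formula complexity, with the countability hypothesis handling clause (e) in the ``moreover'' part. Your elaboration of the details is consistent with that argument, so there is nothing further to add.
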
 
\begin{proof} 
By induction on the ranks of names and on the formulas. 
\end{proof} 

The following is a version of the forcing theorem for the quasi-forcing relation. 

\begin{lemma} \label{forcing theorem} 
Suppose that $\alpha$ is admissible or a limit of admissibles and $p$ is a random condition in $L_\alpha$. 
Then $p\Vdash^{L_\alpha} \varphi(\sigma_0,\dots, \sigma_n)$ if and only if $L_\alpha[x]\vDash \varphi(\sigma_0^x,\dots, \sigma_n^x)$ for all random $x\in [p]$ over $L_\alpha$. 
\end{lemma}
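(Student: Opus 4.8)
The plan is to prove both directions by reducing to the case of $\Delta_0$-formulas, where Lemma \ref{application of boolean values} already gives the equivalence between the Boolean value and truth in the generic extension. First I would reduce the general forcing statement to the $\Delta_0$ case. For a $\Delta_0$-formula $\varphi$, by definition $p\Vdash^{L_\alpha}\varphi(\sigma_0,\dots,\sigma_n)$ means $\mu([p]\setminus\llbracket\varphi(\sigma_0,\dots,\sigma_n)\rrbracket)=0$. If this holds, then for every random $x\in[p]$ over $L_\alpha$ we have $x\in\llbracket\varphi(\sigma_0,\dots,\sigma_n)\rrbracket$, since the set $[p]\setminus\llbracket\varphi\rrbracket$ is (coded by) a set with a code in $L_\alpha$ of measure zero, and a random real avoids every such null set; hence by Lemma \ref{application of boolean values}, $L_\alpha[x]\vDash\varphi(\sigma_0^x,\dots,\sigma_n^x)$. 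This gives the left-to-right direction for $\Delta_0$-formulas.

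For the converse over $\Delta_0$-formulas, suppose $p\not\Vdash^{L_\alpha}\varphi$, i.e. $\mu([p]\setminus\llbracket\varphi\rrbracket)>0$. The set $[p]\setminus\llbracket\varphi\rrbracket=[p]\cap\llbracket\neg\varphi\rrbracket$ has positive measure, and by the density remark after the definition of random forcing (or directly, since $\neg\varphi$ and $p$ give an $\infty$-Borel code with a code in $L_\alpha$) there is a condition $q\leq p$ in $L_\alpha$ with $\mu([q]\setminus\llbracket\neg\varphi\rrbracket)=0$; shrinking to a perfect subtree of positive measure inside this set, $q$ is a genuine random condition in $L_\alpha$. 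Then any $x\in[q]$ random over $L_\alpha$ satisfies $x\in\llbracket\neg\varphi\rrbracket$, hence $x\notin\llbracket\varphi\rrbracket$, and by Lemma \ref{application of boolean values} $L_\alpha[x]\not\vDash\varphi(\sigma_0^x,\dots,\sigma_n^x)$; such an $x$ exists because $[q]$ has positive measure while the union of all null sets with codes in $L_\alpha$ is still null (this uses that $L_\alpha$ is admissible or a limit of admissibles, so there are only countably many Borel codes, or at least that their union of null sets is null — one invokes the relevant counting/absoluteness fact). This $x$ witnesses the failure of the right-hand side.

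To pass from $\Delta_0$ to arbitrary formulas, I would first handle the case where $\alpha$ is countable in some $L_\beta$ with $x$ random over $L_\beta$, which is the setting in which Lemma \ref{application of boolean values} holds for all formulas: the same argument as above goes through verbatim once the Boolean value clauses (c) and (e) for the quantifiers are in force. For the general admissible or limit-of-admissibles case with only $\Delta_0$-absoluteness available, the statement of the lemma should be read for the full forcing language by the standard device of noting that $L_\alpha[x]$ satisfies enough of Kripke–Platek (or ZF${}^-$, in the limit case) that $\Sigma_1$ and $\Pi_1$ formulas reduce appropriately; I would quote the companion results Lemma \ref{generic extension is subset of L stage} and Lemma \ref{L stage is subset of generic extension} to identify $L_\alpha[x]$ with $L_\alpha^x$, so that truth in the generic extension is just truth in a level of the constructible hierarchy over $x$, making the forcing relation behave as usual.

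The main obstacle I anticipate is the converse direction's extension step: strengthening $p$ to a condition $q$ forcing $\neg\varphi$ requires that the positive-measure Borel set $[p]\cap\llbracket\neg\varphi\rrbracket$ contains (or is measure-equivalent to) a perfect tree of positive measure that lies in $L_\alpha$, and that the Boolean value $\llbracket\neg\varphi\rrbracket$ genuinely has a code in $L_\alpha$ — for $\Delta_0$-formulas this is exactly the content of the preceding lemma that Boolean values are $\Delta_1$-definable over $L_\alpha$, but one must check that the $\infty$-Borel code reduces to an honest Borel code (of set-size, not class-size) so that random forcing in $L_\alpha$ sees it. For $\alpha$ admissible this is a $\Sigma_1$-reflection/boundedness argument on the recursion defining the Boolean value; for $\alpha$ a limit of admissibles one reflects into a member of the increasing union. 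Once this is in place, everything else is bookkeeping with Lemma \ref{application of boolean values} and the fact that random reals avoid all null sets coded in $L_\alpha$.
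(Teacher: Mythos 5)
Your proof is correct, and its core is the same as the paper's: both directions go through Lemma \ref{application of boolean values}, with the left-to-right direction exactly as you write it. Where you diverge is the converse. The paper does not strengthen $p$ to a condition $q$ at all: from $\mu([p]\setminus \llbracket \varphi(\sigma_0,\dots,\sigma_n)\rrbracket)>0$ it simply picks a random $x$ over $L_\alpha$ lying in $[p]\setminus \llbracket \varphi(\sigma_0,\dots,\sigma_n)\rrbracket$ (such $x$ exists since this set has positive measure and the reals random over $L_\alpha$ form a co-null set), and then Lemma \ref{application of boolean values} gives $L_\alpha[x]\vDash\neg\varphi(\sigma_0^x,\dots,\sigma_n^x)$, so the right-hand side fails. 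Your detour through $q$ --- which you identify as the main obstacle, requiring the Boolean value to be a genuine set-sized Borel code in $L_\alpha$ and a positive-measure perfect tree inside it --- is therefore unnecessary; the quasi-forcing relation is designed precisely so that no density or extension argument inside the partial order is needed, only avoidance of null sets coded in $L_\alpha$. (The reduction you attempt would work for $\Delta_0$-formulas, where the Boolean value is $\Delta_1$-definable and lies in $L_\alpha$, but it buys nothing here.) Your remarks on formula complexity are apt: the paper's proof is really a proof at the level at which Lemma \ref{application of boolean values} applies, i.e.\ for $\Delta_0$-formulas, or for all formulas under the stronger randomness hypothesis of its ``moreover'' clause; the paper leaves this implicit and indeed only invokes the forcing theorem for $\Delta_0$-statements later, so your extra reduction step is a caveat rather than a needed ingredient.
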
 
\begin{proof} 
Suppose that $p\Vdash^{L_\alpha} \varphi(\sigma_0,\dots, \sigma_n)$ and $x\in [p]$ is random over $L_\alpha$. 
Then $\mu([p]\setminus \llbracket \varphi(\sigma_0, \dots, \sigma_n)\rrbracket)=0$. 
Since $x$ is random over $L_\alpha$, $x\in \llbracket \varphi(\sigma_0, \dots, \sigma_n)\rrbracket$. 
This implies $L_\alpha[x]\vDash \varphi(\sigma_0^x,\dots, \sigma_n^x)$ by Lemma \ref{application of boolean values}. 

Suppose that $p\not\Vdash^{L_\alpha} \varphi(\sigma_0,\dots, \sigma_n)$. 
Then $\mu([p]\setminus \llbracket \varphi(\sigma_0, \dots, \sigma_n)\rrbracket)>0$. 
Suppose that $x\in [p]\setminus \llbracket \varphi(\sigma_0, \dots, \sigma_n)\rrbracket$ is random over $L_\alpha$. 
Then $L_\alpha[x]\vDash \neg\varphi(\sigma_0^x,\dots, \sigma_n^x)$ by Lemma \ref{application of boolean values}. 
\end{proof} 

The following is a version of the truth lemma for the quasi-forcing relation. 

\begin{lemma} \label{truth lemma} 
Suppose that $\alpha$ is admissible or a limit of admissibles and $x$ is random over $L_\alpha$. 
Then $L_\alpha[x]\vDash \varphi(\sigma^x)$ holds if and only if there is a random condition $p$ in $L_\alpha$ with $x\in [p]$ and $p\Vdash \varphi(\sigma)$. 
\end{lemma}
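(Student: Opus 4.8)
The backward direction is immediate from the forcing theorem, Lemma~\ref{forcing theorem}: if $p\in L_\alpha$ is a random condition with $x\in[p]$ and $p\Vdash\varphi(\sigma)$, then, $x$ being random over $L_\alpha$ and lying in $[p]$, that lemma yields $L_\alpha[x]\vDash\varphi(\sigma^x)$. So all the content is in the forward direction. The plan is to isolate the following \emph{covering fact} — an effective, measure-quantitative version of the density of random conditions in positive measure sets, carried out inside $L_\alpha$ — and then run an induction on $\varphi$ on top of it: \emph{if $x$ is random over $L_\alpha$ and $B\subseteq{}^\omega 2$ is a measurable set with an $\infty$-Borel code in $L_\alpha$ and $x\in B$, then there is a random condition $p\in L_\alpha$ with $x\in[p]$ and $\mu([p]\setminus B)=0$.} Granting this, the forward direction follows: one shows first that $L_\alpha[x]\vDash\varphi(\sigma^x)$ implies $x\in\llbracket\varphi(\sigma)\rrbracket$, and then applies the covering fact to $B=\llbracket\varphi(\sigma)\rrbracket$, noting that $\mu([p]\setminus\llbracket\varphi(\sigma)\rrbracket)=0$ is exactly $p\Vdash\varphi(\sigma)$.

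To prove the covering fact I would fix an enumeration $\langle s_i\mid i<\omega\rangle$ of ${}^{<\omega}2$ and, working inside $L_\alpha$, use a $\Delta_1$-recursion of length $\omega$ to build, for each $i$ with $\mu(U_{s_i}\cap B)>0$, a random condition $p_i$ with $[p_i]\subseteq U_{s_i}\cap B$ (up to a null set) and $\mu([p_i])>(1-2^{-i})\mu(U_{s_i}\cap B)$; this is a standard construction of a perfect subtree inside a positive measure set, and it is $\Delta_1$ in the relevant codes because the measure attached to an $\infty$-Borel code is $\Delta_1$-computable. Put $P=\bigcup_i[p_i]$, which has a code $\bigvee_i p_i\in L_\alpha$. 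A Lebesgue density argument then gives $\mu(B\setminus P)=0$: otherwise $B\setminus P$ has positive measure and a code in $L_\alpha$, hence a density point $y$, and for suitable large $i$ the string $s_i\subseteq y$ satisfies $\mu(U_{s_i}\cap(B\setminus P))>\tfrac23\cdot 2^{-|s_i|}$; but then $\mu(U_{s_i}\cap B)$, and so $\mu([p_i])$, is close to $2^{-|s_i|}$, while $[p_i]\cap(B\setminus P)=\emptyset$ forces $\mu([p_i])+\mu(U_{s_i}\cap(B\setminus P))\le 2^{-|s_i|}$ — a contradiction. Since $B\setminus P$ is null with a code in $L_\alpha$ and $x\in B$ is random over $L_\alpha$, we get $x\notin B\setminus P$, hence $x\in[p_i]$ for some $i$, and $p=p_i$ works. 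The main obstacle is packaged exactly here: over a merely admissible $L_\alpha$ one cannot take a maximal antichain of random conditions below $B$ (these need not exist), nor can one select near-measure-maximal conditions along a transfinite recursion (computing the relevant suprema is not $\Delta_1$); the $\omega$-step construction with explicit measure bounds circumvents both difficulties.

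It remains to prove, by induction on $\varphi$, the auxiliary statement $(\ast_\varphi)$: $L_\alpha[x]\vDash\varphi(\sigma^x)$ implies $x\in\llbracket\varphi(\sigma)\rrbracket$; as noted, the covering fact applied to $\llbracket\varphi(\sigma)\rrbracket$ then finishes. The base case, $\varphi$ a $\Delta_0$-formula, is exactly Lemma~\ref{application of boolean values}. For $\varphi=\exists y\,\psi(y,\sigma)$: using $L_\alpha[x]=\{\tau^x\mid\tau\in L_\alpha\}$, pick $\tau\in L_\alpha$ with $L_\alpha[x]\vDash\psi(\tau^x,\sigma^x)$; by $(\ast_\psi)$, $x\in\llbracket\psi(\tau,\sigma)\rrbracket\subseteq\llbracket\exists y\,\psi(y,\sigma)\rrbracket$. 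For $\varphi=\psi_0\wedge\psi_1$: by $(\ast_{\psi_0})$ and $(\ast_{\psi_1})$, $x\in\llbracket\psi_0(\sigma)\rrbracket\cap\llbracket\psi_1(\sigma)\rrbracket=\llbracket\varphi(\sigma)\rrbracket$, and the remaining propositional combinations are handled the same way. For $\varphi=\neg\psi$: suppose toward a contradiction that $x\in\llbracket\psi(\sigma)\rrbracket$; the covering fact yields a random $p\in L_\alpha$ with $x\in[p]$ and $\mu([p]\setminus\llbracket\psi(\sigma)\rrbracket)=0$, i.e.\ $p\Vdash\psi(\sigma)$, whence $L_\alpha[x]\vDash\psi(\sigma^x)$ by Lemma~\ref{forcing theorem}, contradicting $L_\alpha[x]\vDash\neg\psi(\sigma^x)$; so $x\notin\llbracket\psi(\sigma)\rrbracket$, i.e.\ $x\in\llbracket\neg\psi(\sigma)\rrbracket$. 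This completes the induction, modulo the routine remark that $\llbracket\varphi(\sigma)\rrbracket$ is a measurable set with the codes required by the covering fact available in $L_\alpha$: this is immediate for $\Delta_0$-formulas, and for arbitrary $\varphi$ one uses, as in the ``moreover'' clause of Lemma~\ref{application of boolean values}, that $\alpha$ is countable in a level $L_\beta$ over which $x$ is random, so that $\llbracket\varphi(\sigma)\rrbracket$ remains an $\infty$-Borel set of length below $\omega_1$, hence measurable.
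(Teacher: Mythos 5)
Your proof is correct and follows essentially the same route as the paper: the backward direction is the forcing theorem, and the forward direction reduces to $x\in\llbracket \varphi(\sigma)\rrbracket$ plus the fact that the Boolean value is covered, up to a null set coded in $L_\alpha$, by conditions that force $\varphi$, so that the random $x$ lands in one of them. Your ``covering fact'' is just a worked-out version of the step the paper dispatches in one line (that $\mu(\llbracket \varphi(\sigma)\rrbracket)$ is the supremum of $\mu([p])$ over conditions $p\in L_\alpha$ with $[p]\subseteq\llbracket \varphi(\sigma)\rrbracket$), and your induction on $\varphi$ re-derives the direction of Lemma~\ref{application of boolean values} that the paper simply cites.
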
 
\begin{proof} 
Suppose that $x\in [p]$ and $p\Vdash \varphi(\sigma)$. 
Then $\mu([p]\setminus \llbracket \varphi(\sigma)\rrbracket)=0$. 
Since $x$ is random over $L_\alpha$, $x\in L_\alpha[x]\vDash \varphi(\sigma^x)$ holds. 
Then $L_\alpha[x]\vDash \varphi(\sigma^x)$ by Lemma \ref{application of boolean values}. 

Suppose that $L_\alpha[x]\vDash \varphi(\sigma^x)$ holds. 
Then $x\in \llbracket \varphi(\sigma)\rrbracket$ by Lemma \ref{application of boolean values}. 
Since $\mu(\llbracket \varphi(\sigma)\rrbracket)$ is the supremum of $\mu([p])$, where $p$ is a condition in $L_\alpha$ with $[p]\subseteq \llbracket \varphi(\sigma)\rrbracket$, and $x$ is random over $L_\alpha$, there is a condition $p$ in $L_\alpha$ with $x\in [p]$. 
Since $[p]\subseteq \llbracket \varphi(\sigma)\rrbracket$,  $p\Vdash^{L_\alpha} \varphi(\sigma)$. 
\end{proof}

\subsection{The generic extension} 

If $\alpha$ is admissible or a limit of admissibles and $x$ is random over $L_\alpha$, we show that $L_\alpha[x]$ is equal to $L^x_\alpha$. 

\begin{lemma} \label{generic extension is subset of L stage} 
Suppose that $\alpha$ is admissible or a limit of admissibles and $x$ is random over $L_\alpha$. 
Then for all $\gamma<\alpha$ and all $\sigma\in L_{\gamma}$, $\sigma^x\in L_{\gamma+2}^x$. 
\end{lemma}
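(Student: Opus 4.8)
The plan is to prove $\sigma^x \in L^x_{\gamma+2}$ by induction on $\gamma < \alpha$, using the definition $\sigma^x = \{\nu^x \mid (\nu,p)\in\sigma,\ x\in[p]\}$. Fix $\gamma$ and assume the claim holds for all $\gamma' < \gamma$. Let $\sigma \in L_\gamma$. Every pair $(\nu,p) \in \sigma$ satisfies $\nu \in L_{\gamma'}$ for some $\gamma' < \gamma$, so by the induction hypothesis $\nu^x \in L^x_{\gamma'+2} \subseteq L^x_{\gamma+1}$, the last inclusion holding because $\gamma'+2 \le \gamma+1$ whenever $\gamma' < \gamma$ (using that $\gamma$ is a limit ordinal in the relevant cases, or simply absorbing the finite shift — one must be mildly careful when $\gamma = \gamma'+1$, but then $\gamma'+2 = \gamma+1$ and the inclusion is literal equality). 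Thus every element of $\sigma^x$ lies in $L^x_{\gamma+1}$.

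Next I would argue that $\sigma^x$ is itself definable over $L^x_{\gamma+1}$, which gives $\sigma^x \in L^x_{\gamma+2}$. The point is that $\sigma^x = \{\nu^x \mid (\nu,p)\in\sigma \text{ and } x\in[p]\}$ can be read off as follows: the parameter $x$ is available (it lies in $L_0^x = \mathrm{tc}(\{x\})$, hence in every level), the parameter $\sigma$ is available since $\sigma \in L_\gamma \subseteq L^x_\gamma \subseteq L^x_{\gamma+1}$ (the constructible hierarchy over $x$ contains the ordinary constructible hierarchy at every level, as $L_0^x$ is transitive and contains $x$), and the map $(\nu,p)\mapsto \nu^x$ restricted to pairs in $\sigma$ is — by the induction hypothesis, collecting uniformly — a set in $L^x_{\gamma+1}$, or at the very least the relation ``$y = \nu^x$ for some $(\nu,p)\in\sigma$ with $x\in[p]$'' is definable over $L^x_{\gamma+1}$ with parameters $\sigma$ and $x$, since membership $x\in[p]$ is a $\Delta_0$ property of $x$ and $p$. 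Hence $\sigma^x$, being the collection of all such $y$, is a definable subset of $L^x_{\gamma+1}$ and therefore an element of $L^x_{\gamma+2}$.

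I expect the main obstacle to be bookkeeping of the index: one wants $\sigma^x \in L^x_{\gamma+2}$ and not merely $\sigma^x \in L^x_{\gamma+3}$ or worse, so the induction hypothesis must be tight, and one must verify that when the elements $\nu^x$ sit in levels $L^x_{\gamma'+2}$ for various $\gamma' < \gamma$, they collectively sit in $L^x_{\gamma+1}$ rather than spilling into $L^x_{\gamma+2}$. This works because $\sup_{\gamma'<\gamma}(\gamma'+2) \le \gamma+1$: for limit $\gamma$ the supremum is exactly $\gamma$, and for successor $\gamma = \delta+1$ the only new case $\gamma' = \delta$ gives $\delta+2 = \gamma+1$. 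A secondary subtlety is confirming that the defining formula for $\sigma^x$ over $L^x_{\gamma+1}$ genuinely uses only parameters and quantifiers ranging over $L^x_{\gamma+1}$; this is where one invokes that $x \in [p]$ is absolute and that $\sigma$ and $x$ are both in $L^x_{\gamma+1}$. Once the index arithmetic is pinned down, the rest is routine and the lemma follows; this bound is then exactly what is needed to compare $L_\alpha[x]$ with $L^x_\alpha$ level by level in the subsequent lemmas.
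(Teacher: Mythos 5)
Your index bookkeeping (the elements $\nu^x$ all landing in $L^x_{\gamma+1}$, since $\sup_{\gamma'<\gamma}(\gamma'+2)\le\gamma+1$) is fine and matches the paper, but there is a genuine gap at the step you describe as ``by the induction hypothesis, collecting uniformly''. The induction hypothesis you carry is only the element-wise statement that each individual value $\nu^x$ lies in $L^x_{\gamma'+2}$; it does not give you that the assignment $(\nu,p)\mapsto\nu^x$ restricted to $\sigma$ is a \emph{set} in $L^x_{\gamma+1}$, nor that the relation ``$y=\nu^x$ for some $(\nu,p)\in\sigma$ with $x\in[p]$'' is definable over $L^x_{\gamma+1}$. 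The clause $x\in[p]$ is indeed $\Delta_0$, but ``$y=\nu^x$'' is defined by a recursion on names, and to express it over $L^x_{\gamma+1}$ you need a witness for that recursion (a partial evaluation function) to be available inside $L^x_{\gamma+1}$. You cannot appeal to $\Sigma$-recursion here, because the intermediate levels $L^x_{\gamma+1}$ need not be admissible --- the whole point of the lemma is the tight level-by-level bound $\gamma+2$ below an $\alpha$ that is only assumed admissible (or a limit of admissibles) at the top.

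The paper closes exactly this gap by strengthening the induction: it proves simultaneously that the approximate evaluation functions $f_{\beta,\sigma}\colon\mathrm{tc}(\sigma)\cap L_\beta\to L_\alpha[x]$ (sending $(\tau,p)$ to $\tau^x$ when $x\in[p]$ and to $\emptyset$ otherwise) and the sequences $F_\beta=\langle f_{\theta,\sigma}\mid\theta<\beta\rangle$ are themselves elements of the relevant levels of the $L^x$-hierarchy. At successor stages $f_{\beta,\sigma}$ is defined from $F_\theta$ by a first-order formula; at limit stages one must additionally observe that $F_\delta$ is the \emph{unique} function satisfying a definable recursive description (domain $\delta$, correct successor clause, continuity at limits), so that it can be recovered definably at the limit level. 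With this stronger hypothesis, $\sigma^x=f_{\gamma,\sigma}(\sigma)$ is definable over $L^x_{\gamma+1}$ and hence in $L^x_{\gamma+2}$. So your overall plan (induction on levels, then definability over $L^x_{\gamma+1}$) is the right shape, but as written the crucial uniformity step is asserted rather than proved; to repair it you must carry the evaluation functions, not just their values, through the induction.
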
 
\begin{proof} 
Suppose that $\sigma\in L_{\gamma}$ is a name. We define for all $\beta<\gamma$ the $\beta$-th approximate evaluation for $\sigma$ as the function 
$$f_{\beta,\sigma}:\text{tc}(\sigma)\cap L_{\beta}\rightarrow L_{\alpha}[x]$$
which maps $(\tau,p)$ to $\tau^{x}$ if $x\in [p]$ and to $\emptyset$ otherwise. Moreover, let $F_{\gamma}(\beta)=f_{\beta,\sigma}$ for $\beta<\gamma$. 

We will show by simultaneous induction that $f_{\beta,\sigma}, F_{\beta} \in L_{\beta+2}$ for all $\beta<\gamma$. 
It will then follow easily that $\sigma^{x}$ is definable over $L_{\gamma+1}$ and hence an element of $L_{\gamma+2}$. 
Suppose that $\beta=\theta+1$. Then $F_{\theta}\in L_{\theta+1}$ by the inductive hypothesis. We define $f_{\beta,\sigma}$ over $L_{\theta}$ by 
$$f_{\beta,\sigma}(\tau,p)=\{F_{\theta}(\bar{\tau})\mid x\in[p],\ \exists q\ ((\bar{\tau},q)\in\tau,\  x\in[q])\}$$
Then $f_{\beta,\sigma}\in L_\beta= L_{\theta+1}$. Let $F_{\beta}=F_{\theta}\cup\{(\theta,f_{\theta,\sigma})\}$. 
If  $\beta$ is a limit ordinal, we define $f_{\beta,\sigma}$ by 
$$f_{\beta,\sigma}((\tau,p))=\{F_{\delta}(\bar{\tau})\mid x\in[p],\ \delta<\gamma,\ \tau^{\prime}\in L_{\delta},\ \exists{q}((\bar{\tau},q)\in\tau,\  x\in [q])\}.$$ 
To define $F_{\beta}$ in the limit case, we proceed as follows. 
Note that for $\delta<\beta$, $F_{\delta}$ is the unique function which satisfies the following in $L_{\beta}$: 
$\mathrm{dom}(F)=\delta$, $F(0)=0$, $F$ is continuous at all limits $\gamma<\delta$, and $F(\eta+1)$ is defined as in the successor case above for all $\eta<\delta$. 
It follows that $f_{\gamma,\sigma}$ is definable over $L_{\gamma+1}$ and hence $\sigma^{x}=f_{\gamma,\sigma}(\sigma)\in L_{\gamma+2}$. 
\end{proof} 


\begin{lemma} \label{L stage is subset of generic extension} 
Suppose that $\alpha$ is admissible or a limit of admissibles and $x$ is random over $L_\alpha$. 
Then $L_{\alpha}^x\subseteq L_{\alpha}[x]$. 
\end{lemma}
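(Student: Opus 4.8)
The plan is to show that $L_\delta^x\in L_\alpha[x]$ for every $\delta<\alpha$. Since $L_\alpha[x]$ is transitive --- this is immediate from the definition of $\sigma^x$: if $(\nu,p)\in\sigma\in L_\alpha$ then $\nu\in L_\alpha$ by transitivity of $L_\alpha$, so $\nu^x\in L_\alpha[x]$ --- this yields $L_\delta^x\subseteq L_\alpha[x]$, and taking the union over $\delta<\alpha$ gives $L_\alpha^x\subseteq L_\alpha[x]$ as required (and, combined with Lemma \ref{generic extension is subset of L stage}, equality). Note first that $x\in L_\alpha[x]$ via the canonical name $\dot x=\{(\check n,T_n)\mid n\in\omega\}$, where $T_n=\{s\in{}^{<\omega}2\mid |s|>n\rightarrow s(n)=1\}$ is a perfect subtree with $\mu([T_n])=\tfrac12$; then $\dot x\in L_\alpha$ and $\dot x^x=x$, so by transitivity $L_0^x=\mathrm{tc}(\{x\})\subseteq L_\alpha[x]$ (for $x$ random, hence infinite as a subset of $\omega$).

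Next I would define random names $\tau_\delta\in L_\alpha$, by recursion on $\delta<\alpha$, with the intention that $\tau_\delta^x=L_\delta^x$ whenever $x$ is random over $L_\alpha$. Write $\mathbf 1={}^{<\omega}2$ for the maximal condition. Put $\tau_0=\{(\dot x,\mathbf 1)\}\cup\{(\check n,\mathbf 1)\mid n\in\omega\}$, so $\tau_0^x=\mathrm{tc}(\{x\})$; and $\tau_\lambda=\bigcup_{\delta<\lambda}\tau_\delta$ at limit $\lambda$, so that $\tau_\lambda^x=\bigcup_{\delta<\lambda}\tau_\delta^x$. For the successor step, $\tau_{\delta+1}$ should be a name for $\mathrm{Def}(\tau_\delta^x)$. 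Given a formula $\varphi(v,w_1,\dots,w_k)$ and names $\bar\rho=(\rho_1,\dots,\rho_k)$ with $(\rho_i,p_i)\in\tau_\delta$, let $\psi_\varphi(m,v,\bar w)$ be the $\Delta_0$-formula ``$v\in m\wedge\varphi^m(v,\bar w)$'' (recall that satisfaction in a set $M$ is literally the relativization $\varphi^M$, which is $\Delta_0$ in $M$), and let $\mu_{\varphi,\bar\rho}$ be a name such that $\nu^x\in\mu_{\varphi,\bar\rho}^x$ iff there is $(\nu,p)\in\tau_\delta$ with $x\in[p]\cap\bigcap_i[p_i]\cap\llbracket\psi_\varphi(\tau_\delta,\nu,\bar\rho)\rrbracket$. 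Since $\psi_\varphi$ is $\Delta_0$ and the parameters lie in $L_\alpha$, the Boolean value $\llbracket\psi_\varphi(\tau_\delta,\nu,\bar\rho)\rrbracket$ is an element of $L_\alpha$ (it codes a Borel, in fact set-generated, set) by the definability of the $\Delta_0$ quasi-forcing relation established above; using that the perfect-tree conditions of positive measure are dense in the positive-measure Borel sets, I replace each such Borel side condition $B\in L_\alpha$ by the set $\{q\mid q\text{ a condition in }L_\alpha,\ [q]\subseteq B\}\in L_\alpha$, which for $x$ random over $L_\alpha$ selects exactly the reals in $B$ (their symmetric difference is a null Borel set with a code in $L_\alpha$). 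Thus $\mu_{\varphi,\bar\rho}$ may be taken to be a genuine random name in $L_\alpha$, and I set $\tau_{\delta+1}=\{(\mu_{\varphi,\bar\rho},\mathbf 1)\mid\varphi\text{ a formula},\ \bar\rho\text{ names with }(\rho_i,p_i)\in\tau_\delta\}$.

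The verification proceeds by simultaneous induction on $\delta<\alpha$ of the two statements: $\tau_\delta\in L_\alpha$, and $\tau_\delta^x=L_\delta^x$ for $x$ random over $L_\alpha$. For the second at a successor $\delta+1$: applying Lemma \ref{application of boolean values} to the $\Delta_0$-formula $\psi_\varphi$ gives $x\in\llbracket\psi_\varphi(\tau_\delta,\nu,\bar\rho)\rrbracket$ iff $L_\alpha[x]\vDash\psi_\varphi(\tau_\delta^x,\nu^x,\bar\rho^x)$, i.e. iff $\nu^x\in L_\delta^x$ and $\varphi^{L_\delta^x}(\nu^x,\bar\rho^x)$ holds; the guard $x\in\bigcap_i[p_i]$ ensures $\bar\rho^x\in(L_\delta^x)^k$, so $\mu_{\varphi,\bar\rho}^x$ is precisely the subset of $L_\delta^x$ defined over $(L_\delta^x,\in)$ by $\varphi$ with parameters $\bar\rho^x$ (and $\emptyset\in\mathrm{Def}(L_\delta^x)$ when the guard fails). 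As $\varphi$ and $\bar\rho$ range over all choices, and every element of $L_\delta^x=\tau_\delta^x$ is $\rho^x$ for some $(\rho,p)\in\tau_\delta$ with $x\in[p]$, we sweep out exactly $\mathrm{Def}(L_\delta^x)=L_{\delta+1}^x$. The base and limit cases are immediate. For the first statement: the $\Delta_0$-Boolean-value function and the map $B\mapsto\{q\mid [q]\subseteq B\}$ are $\Delta_1$ over $L_\alpha$, so the passage $\tau_\delta\mapsto\tau_{\delta+1}$ is $\Delta_1$ over $L_\alpha$; by $\Sigma$-recursion the sequence $\langle\tau_\delta\mid\delta<\gamma\rangle$ then lies in $L_\alpha$ for each $\gamma<\alpha$ when $\alpha$ is admissible, and when $\alpha$ is a limit of admissibles one localizes, running the recursion up to $\delta$ inside $L_\beta$ for an admissible $\beta$ with $\delta<\beta<\alpha$ (all relevant Boolean values already lie in $L_\beta$). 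This completes the construction, and the opening remark finishes the proof.

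I expect the only real obstacle to be the bookkeeping ensuring that the name recursion stays inside $L_\alpha$ --- that is, that ``definability over $L_\delta^x$'' is captured by names in $L_\alpha$. This is exactly what the definability of the $\Delta_0$ quasi-forcing relation provides, once Borel side conditions are converted back to legitimate random conditions via density; everything else is routine induction.
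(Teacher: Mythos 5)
Your overall strategy is the same as the paper's: build, by a $\Sigma_1$/$\Delta_1$ recursion inside $L_\alpha$, names $\tau_\delta$ with $\tau_\delta^x=L_\delta^x$ for random $x$, capture $\mathrm{Def}(L_\delta^x)$ at successor stages via Boolean values and the quasi-forcing relation for relativized formulas, and finish by transitivity of $L_\alpha[x]$; your device of attaching the Boolean value $\llbracket\psi_\varphi(\tau_\delta,\nu,\bar\rho)\rrbracket$ as an extra side condition is, if anything, more careful than the paper's filtering $\{(\sigma,p)\in\tau_\gamma\mid p\Vdash\varphi^{\tau_\gamma}(\sigma)\}$. However, one step fails as written: the claim that $\{q \mid q \text{ a condition in } L_\alpha,\ [q]\subseteq B\}$ is an element of $L_\alpha$. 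Random forcing is a \emph{class} forcing over these $L_\alpha$ (the paper emphasizes this): new reals, and hence new positive-measure perfect subtrees of any fixed condition contained in $B$, appear cofinally in $L_\alpha$ for the relevant $\alpha$ (such as $\omega_1^{\mathrm{ck}}$, $\lambda$, $\zeta$, $\Sigma$), so this collection is a proper class of $L_\alpha$. Consequently $\mu_{\varphi,\bar\rho}$, and with it $\tau_{\delta+1}$, as you define them are not sets of $L_\alpha$, and the recursion does not stay inside $L_\alpha$; for the same reason the remark that $B\mapsto\{q\mid[q]\subseteq B\}$ is $\Delta_1$ does not help.

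The repair is routine and is exactly the bounding that the paper carries out with its ordinals $\delta_{\sigma,p}$ and $\alpha_\gamma$: you only need a \emph{set-sized} family of conditions covering $B'=[p]\cap\bigcap_i[p_i]\cap\llbracket\psi_\varphi\rrbracket$ up to measure zero. Since the measure attached to a code is $\Delta_1$ and $\mu(B')$ is the supremum of $\mu([q])$ over conditions $q\in L_\alpha$ with $[q]\subseteq B'$ (the fact already used in the proof of Lemma \ref{truth lemma}), admissibility lets you choose, for each $n$, the $<_L$-least condition $q_n$ with $[q_n]\subseteq B'$ and $\mu([q_n])>\mu(B')-2^{-n}$; the sequence $\langle q_n\mid n\in\omega\rangle$ is then an element of $L_\alpha$, and a random $x\in B'$ lies in some $[q_n]$ because $B'\setminus\bigcup_n[q_n]$ is null with a code in $L_\alpha$. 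Building $\mu_{\varphi,\bar\rho}$ from these $q_n$ (or, as in the paper, bounding all relevant Boolean values below some $\alpha_{\gamma+1}<\alpha$ obtained by $\Sigma_1$-collection) restores set-hood, and the rest of your argument --- the evaluation computation via Lemma \ref{application of boolean values}, the $\Sigma$-recursion when $\alpha$ is admissible, and the localization to admissible $\beta<\alpha$ in the limit-of-admissibles case --- then goes through as you describe.
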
 
\begin{proof} 
It is sufficient to prove this for the case that $\alpha$ is admissible. 
It is sufficient to show that there is a $\Sigma_1$-definable sequence $\langle \tau_\gamma,\alpha_\gamma \mid \gamma<\alpha \rangle$ such that each $\tau_\gamma$ is a name, $\alpha_\gamma$ is an ordinal, $\sup_{\gamma<\alpha}\alpha_\gamma=\alpha$, $\tau_{\gamma}$ is uniformly $\Sigma_1$-definable over $L_{\alpha_\gamma}$ and $\tau^x=L_{\gamma}[x]$. Since $L_\gamma[x]$ is transitive, this implies the claim. 

Suppose that $\tau_\gamma$ and $\alpha_\gamma$ are defined. Suppose that $(\sigma,p)\in \tau_\gamma$. 
Let $\varphi^x$ denote the relativization of a formula $\varphi$ to a set $x$. 
Since $\alpha$ is admissible, there is a least ordinal
$\delta_{\sigma,p}$ such that $\llbracket \varphi^{\tau_\gamma} (\sigma_0,\dots, \sigma_n)\rrbracket \subseteq \delta_{\sigma,p}$ for all formulas $\varphi(x_0,\dots, x_n)$ and all names $\sigma_0,\dots,\sigma_n$ such that there are conditions $p_i$ with $(\sigma_i, p_i)\in \tau_\gamma$ for all $i\leq n$. 
 
Let $\alpha_{\gamma+1}=\sup_{(\sigma,p)\in \tau_\gamma}\delta_{\sigma,p}$. Then $\alpha_{\gamma+1}$ is uniformly $\Sigma_1$-definable from $\alpha_\gamma$ and $\tau_\gamma$. 
Moreover, let $\tau_\gamma^{\varphi}=\{(\sigma,p)\in \tau_\gamma\mid p\Vdash \phi^{\tau_\gamma}(\sigma))\}$. Then $\tau_\gamma^{\phi}$ is uniformly $\Sigma_1$-definable over $L_{\alpha_{\gamma+1}}$. 
By forming unions at limits, we define the sequence $\langle \tau_\gamma,\alpha_\gamma \mid \gamma<\alpha \rangle$ in a $\Sigma_1$ recursion. 
\end{proof} 

We now argue that $L_\alpha[x]$ is admissible if $\alpha$ is admissible and $x$ is sufficiently random.

\begin{lemma} \label{preservation of admissibility by randoms} 
Suppose that $\alpha$ is admissible or a limit of admissibles, and $x$ is random over $L_{\alpha+1}$. 
Then $L_\alpha[x]$ is admissible or a limit of admissibles, respectively. 
\end{lemma}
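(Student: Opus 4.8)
The plan is to prove the two cases (admissible, limit of admissibles) essentially in one stroke, reducing the limit case to the admissible case. First observe that if $\alpha=\sup_{i<\lambda}\alpha_i$ is an increasing union of admissibles $\alpha_i$ and $x$ is random over $L_{\alpha+1}$, then $x$ is random over each $L_{\alpha_i+1}$ (a Borel code in $L_{\alpha_i+1}$ lies in $L_{\alpha+1}$), so by the admissible case each $L_{\alpha_i}[x]$ is admissible; and by Lemma~\ref{generic extension is subset of L stage} together with Lemma~\ref{L stage is subset of generic extension}, $L_{\alpha_i}[x]=L_{\alpha_i}^x$, so $L_\alpha[x]=\bigcup_{i<\lambda}L_{\alpha_i}[x]=\bigcup_{i<\lambda}L_{\alpha_i}^x=L_\alpha^x$ is an increasing union of admissible sets. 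So it remains to treat the case $\alpha$ admissible.

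For $\alpha$ admissible, I would verify the single nontrivial admissibility axiom, $\Sigma_1$-collection, for $L_\alpha[x]=L_\alpha^x$ (the rest of Kripke-Platek holds in any $L_\gamma^x$). So suppose $L_\alpha[x]\vDash\forall a\in b\ \exists y\ \psi(a,y,\sigma^x)$ with $\psi$ a $\Sigma_0$-formula and $b=\beta^x$ a name in $L_\alpha$; I must find a bound. Pull this back to the forcing side: for each $(\tau,q)\in\beta$ with $x\in[q]$, the set of conditions $p$ with $[p]\subseteq[q]$ forcing $\exists y\ \psi(\tau,y,\sigma)$ has union of measure $\mu([q])$ (by the truth lemma, Lemma~\ref{truth lemma}, applied inside $[q]$, since $L_\alpha[x]\vDash\exists y\ \psi(\tau^x,y,\sigma^x)$ for random $x\in[q]$). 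For such $p$, using the definability of the quasi-forcing relation over $L_\alpha$ (the Lemma after Definition of $p\Vdash^{L_\alpha}$) and $\Sigma_1$-reflection/collection in $L_\alpha$, one finds a single ordinal $\gamma_{\tau,p}<\alpha$ and a name $\nu\in L_{\gamma_{\tau,p}}$ with $p\Vdash^{L_\alpha}\psi(\tau,\nu,\sigma)$. Now here is the key measure-theoretic point, analogous to the deleted Lemma~\ref{eventual constant sequence of infty Borel codes}: the function sending $n$ to the least $\gamma<\alpha$ such that the conditions $p$ with $\gamma_{\tau,p}<\gamma$ (ranging over all relevant $(\tau,q)\in\beta$) have union of measure at least $(1-2^{-n})\mu([\beta])$ is $\Sigma_1$-definable over $L_\alpha$, hence bounded below $\alpha$ by admissibility; let $\gamma^*$ be the supremum. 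Then the conditions $p$ with witness-name in $L_{\gamma^*}$ have union of full measure inside $[\beta]$, so since $x$ is random over $L_\alpha$, for $L_\alpha[x]$-almost every relevant $a\in b$ the witness $y$ can be taken in $L_{\gamma^*}[x]$, and $L_{\gamma^*+\omega}[x]$ (say) bounds all witnesses; this ordinal is $<\alpha$.

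The main obstacle I expect is the measure-theoretic bookkeeping in the boundedness step: one must be careful that the relevant family of conditions is indexed in a way that is genuinely $\Sigma_1$ over $L_\alpha$ (so that admissibility applies), and that passing from "union of conditions has measure $1$ inside $[\beta]$" to "almost every $a\in b$ has a witness of small rank" is legitimate — this uses that $x$ is random over $L_\alpha$ to ensure $x$ lands in the measure-one piece, which is exactly why the hypothesis is randomness over $L_{\alpha+1}$ rather than over $L_\alpha$ (the relevant $\infty$-Borel code and the enumeration of conditions live one level up). A secondary subtlety is the reduction $L_\alpha[x]=L_\alpha^x$, which is precisely Lemmas~\ref{generic extension is subset of L stage} and~\ref{L stage is subset of generic extension} and requires $x$ random over $L_\alpha$ — fortunately implied by the stronger hypothesis.
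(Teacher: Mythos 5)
Your reduction of the limit case to the admissible case is fine, and your overall strategy (a $\Sigma_1$-definable measure-modulus function over $L_\alpha$, bounded using admissibility of $L_\alpha$, followed by an appeal to randomness over $L_{\alpha+1}$) is exactly the spirit of the paper's proof, which refutes a cofinal $\Sigma_1$-definable function rather than verifying collection directly. But the step that launches your measure bookkeeping is not justified: you claim that for each $(\tau,q)\in\beta$ the conditions $p$ with $[p]\subseteq[q]$ forcing $\exists y\,\psi(\tau,y,\sigma)$ have union of measure $\mu([q])$, ``since $L_\alpha[x]\vDash\exists y\,\psi(\tau^x,y,\sigma^x)$ for random $x\in[q]$.'' The collection hypothesis is a statement about the one given real $x$ only; for other randoms $x'\in[q]$ the existential statement may simply fail in $L_\alpha[x']$, so that union can have measure strictly below $\mu([q])$, and your modulus function calibrated against $(1-2^{-n})\mu([\beta])$ is then never total. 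The paper avoids exactly this by never claiming fullness: it forms the Boolean value of the entire hypothesis (there, $A=\bigcap_n\llbracket\exists\gamma\ \dot f(n)\in L_\gamma[\dot x]\rrbracket$), observes that $\mu(A)=\epsilon>0$ only because $A$ is coded in $L_{\alpha+1}$ and contains the random $x$, and then shows that the part of $A$ on which witnesses are unbounded is null, by bounding, for each rational $\delta\leq\epsilon$ separately, a $\Sigma_1$-definable modulus function via admissibility.

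Two further points would need repair. First, from ``$p$ forces $\exists y\,\psi(\tau,y,\sigma)$'' in the quasi-forcing sense you cannot extract a single name $\nu\in L_{\gamma_{\tau,p}}$ with $p\Vdash\psi(\tau,\nu,\sigma)$: different names may be needed on different positive-measure pieces of $[p]$, so one must work directly with the increasing sets $\bigcup_{\nu\in L_\gamma}\llbracket\psi(\tau,\nu,\sigma)\rrbracket$ (as the paper does with $\llbracket\dot f(n)\in L_{\bar\alpha}[\dot x]\rrbracket$) rather than with pairs $(p,\nu)$. Second, your conclusion ``for $L_\alpha[x]$-almost every relevant $a\in b$ the witness can be taken in $L_{\gamma^*}[x]$'' conflates the measure on the reals with quantification over $b$: collection needs a bound for \emph{all} $a\in b$, and the correct ending is that the set of randoms $x'$ in the Boolean value of the hypothesis for which some element of $\beta^{x'}$ has no witness with name in $L_{\gamma^*}$ is null and coded in $L_{\alpha+1}$, so the given $x$ avoids it; one then translates membership in the Boolean value back into truth in $L_\alpha[x]$ via Lemma \ref{application of boolean values}. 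With these repairs your argument becomes essentially the paper's proof, recast in the collection formulation.
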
 
\begin{proof} 
It is sufficient to prove this for the case where $\alpha$ is admissible. 
Suppose that $f$ is a $\Sigma_1$-definable function over $L_\alpha[x]$ that is cofinal in $\alpha$ and has domain $\eta<\alpha$. We will assume that $\eta=\omega$ to simplify the notation. 

Suppose that $\dot{x}$ is a name for the random generic 
and that $\dot{f}$ is a name for $f$. 
Since $f$ is a function in $L_\alpha[x]$ and $x$ is random over $L_{\alpha+1}$, 
$$\mu(\bigcap_{n\in\omega} \llbracket \exists \alpha\ \dot{f}(n)\in L_\alpha[\dot{x}]\rrbracket)>0,$$ 
where the Boolean value of existential formulas is defined as a union in the obvious way. 
Let $\mu(\bigcap_{n\in\omega} \llbracket \exists \alpha\ \dot{f}(n)\in L_\alpha[\dot{x}]\rrbracket)=\epsilon$. 

\begin{claim} \label{claim for preservation of admissibility} 
$\mu(\bigcap_{n\in\omega} \llbracket \exists \alpha\ \dot{f}(n)\in L_\alpha[\dot{x}]\rrbracket\setminus \llbracket \exists g\ \forall n\ (\dot{f}(n)=g(n)) \rrbracket)=0$. 
\end{claim} 
\begin{proof} 
Suppose that $\delta\leq\epsilon$ with $\delta\in \mathbb{Q}$. 
We consider the $\Delta_0$-definable function $h$ that maps $n$ to the least $\bar{\alpha}<\alpha$ such that 
$$\mu(\bigcap_{i\leq n} \llbracket \dot{f}(i)\in L_{\bar{\alpha}}[\dot{x}]\rrbracket)\geq\delta$$ 
and this $\Sigma_1$-statement (i.e. the statement that the measure is at least $\delta$) is witnessed in $L_{\bar{\alpha}}$. 
Since $\alpha$ is admissible, we obtain some $\gamma<\alpha$ with $\mu(\bigcap_{n\in\omega} \llbracket \exists \alpha\ \dot{f}(n)\in L_\gamma\rrbracket)\geq \delta$ 
and hence 
$$\mu(\bigcap_{n\in\omega} \llbracket \exists \alpha\ \dot{f}(n)\in L_\alpha\rrbracket\setminus \llbracket \exists g\ \forall n\ (\dot{f}(n)=g(n)) \rrbracket)\leq \epsilon - \delta.$$ 
\end{proof} 

Since the set in Claim \ref{claim for preservation of admissibility} is definable over $L_\alpha$, this implies the statement of Lemma \ref{preservation of admissibility by randoms} by Lemma \ref{application of boolean values}. 
\end{proof} 

As an example for how the previous can be applied to prove known theorems, we consider the following classical result (see [Theorem 9.3.9, Nies]). 
Note that random over $L_{\om}$ in our notation is equivalent to $\Delta^1_1$-random.

\begin{lemma} \label{classical characterization of Pi11-randoms} (see \cite[Theorem 9.3.9]{MR2548883}) 
A real $x$ is $\Pi^1_1$-random if and only if $x$ is $\Delta^1_1$-random and $\omega_1^x=\om$. 
\end{lemma}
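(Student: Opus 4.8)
The plan is to prove the two implications separately. The main tool for the direction \emph{from the conjunction to $\Pi^1_1$-randomness} is the Gandy--Spector representation of $\Pi^1_1$ sets, combined with the observation --- a special case of Lemmas \ref{generic extension is subset of L stage} and \ref{L stage is subset of generic extension} --- that if $x$ is random over $L_{\om}$ then $L_{\om}[x]=L_{\om}^x$. Recall also, from the remark preceding the statement, that being random over $L_{\om}$ is the same as being $\Delta^1_1$-random, and that $\Pi^1_1$-randomness means avoiding every $\Pi^1_1$ null set.

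\emph{Suppose $x$ is $\Delta^1_1$-random and $\omega_1^x=\om$; I show $x$ is $\Pi^1_1$-random.} Let $A\subseteq{}^\omega2$ be a $\Pi^1_1$ set with $\mu(A)=0$; I must show $x\notin A$. By the Gandy--Spector normal form, fix a recursive assignment $y\mapsto T_y$ of trees on $\omega$, computed uniformly from $y$, with $y\in A$ iff $T_y$ is well-founded, and set $A_\alpha=\{y\mid T_y$ has rank $<\alpha\}$ for $\alpha<\omega_1$. Then $\langle A_\alpha\mid\alpha<\omega_1\rangle$ is an increasing sequence of Borel sets with union $A$; for every recursive $\alpha<\om$ the set $A_\alpha$ is $\Delta^1_1$, i.e.\ has a Borel code in $L_{\om}$; and for every real $y$ one has $y\in A$ iff $y\in A_\xi$ for some $\xi<\omega_1^y$, since the rank of a $y$-recursive well-founded tree is an ordinal below $\omega_1^y$. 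As $A_\alpha\subseteq A$ we get $\mu(A_\alpha)=0$ for each $\alpha<\om$, so $A_\alpha$ is a null Borel set with a code in $L_{\om}$; since $x$ is random over $L_{\om}$, $x\notin A_\alpha$ for all $\alpha<\om$. If we had $x\in A$, then $\omega_1^x=\om$ would give $x\in A_\xi$ for some $\xi<\om$, a contradiction. Hence $x\notin A$.

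\emph{Suppose $x$ is $\Pi^1_1$-random; I show $x$ is $\Delta^1_1$-random and $\omega_1^x=\om$.} Every $\Delta^1_1$ set is $\Pi^1_1$, so $x$ avoids every $\Delta^1_1$ null set and is thus $\Delta^1_1$-random. For the second clause, note that $\omega_1^x\geq\om$ always, as every ordinal below $\om$ is recursive and hence has an $x$-recursive code; so it suffices to see $x\notin S$, where $S=\{y\mid\omega_1^y>\om\}=\{y\mid L_{\om}^y$ is not admissible$\}$. The set $S$ is $\Pi^1_1$ (classical). Moreover $S$ is null: since $L_{\om+1}$ is countable, it contains only countably many Borel codes, so the set $R$ of reals random over $L_{\om+1}$ is conull; by Lemma \ref{preservation of admissibility by randoms} with $\alpha=\om$, together with Lemmas \ref{generic extension is subset of L stage} and \ref{L stage is subset of generic extension}, every $y\in R$ has $L_{\om}^y=L_{\om}[y]$ admissible, hence $\omega_1^y\leq\om$ and so $y\notin S$; thus $S\subseteq{}^\omega2\setminus R$ is null (which incidentally recovers Sacks' theorem). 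Since $x$ is $\Pi^1_1$-random it avoids the $\Pi^1_1$ null set $S$, so $\omega_1^x=\om$.

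\emph{Main obstacle.} The delicate step is the first direction: one must be sure that the Borel approximations $A_\alpha$ at recursive stages $\alpha<\om$ genuinely have codes in $L_{\om}$, so that randomness over $L_{\om}$ applies, and that for a real $x$ with $\omega_1^x=\om$ membership in a $\Pi^1_1$ set is already decided by these stages --- this is exactly what the Gandy--Spector analysis provides, and is where I would take most care. The second direction is routine once one grants that $\{y\mid\omega_1^y>\om\}$ is $\Pi^1_1$ and null, the nullity coming for free from Lemma \ref{preservation of admissibility by randoms}.
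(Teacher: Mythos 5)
Your proof is correct, and the forward direction (that a $\Pi^1_1$-random $x$ satisfies $\omega_1^x=\om$) is exactly the paper's argument: the set $\{y\mid \omega_1^y>\om\}$ is $\Pi^1_1$ (the paper justifies this via Gandy--Spector rather than citing it as classical) and is null because randoms over $L_{\om+1}$ form a conull set on which Lemma \ref{preservation of admissibility by randoms}, together with Lemmas \ref{generic extension is subset of L stage} and \ref{L stage is subset of generic extension}, forces $\omega_1^y=\om$. In the converse direction you take a genuinely different, and somewhat more self-contained, route: the paper fixes the \emph{largest} $\Pi^1_1$ null set $A$ (the universal test from Hjorth--Nies), decomposes it by Gandy--Spector as contained in $\{y\mid\omega_1^y>\om\}\cup\bigcup_{\alpha<\om}A_\alpha$ with each $A_\alpha$ coded in $L_{\om}$, and uses maximality of $A$ to see that this is an equality, so a $\Delta^1_1$-random $x$ with $\omega_1^x=\om$ avoids $A$ and hence every $\Pi^1_1$ null set; you instead stratify an \emph{arbitrary} $\Pi^1_1$ null set $A$ directly, observing that $A_\alpha\subseteq A$ already gives $\mu(A_\alpha)=0$ for every $\alpha<\om$ and that $\omega_1^x=\om$ bounds the rank of $T_x$ below $\om$, so membership is decided at a stage coded in $L_{\om}$. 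Your version avoids any appeal to the existence of a universal $\Pi^1_1$ test, while the paper's version is a one-line reduction once that test (which it needs elsewhere anyway, cf.\ Section \ref{subsection ITTM-random reals}) is in hand; the underlying boundedness mechanism is the same in both.
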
 
\begin{proof} 
We first claim that $\omega_1^x=\om$ for every $\Pi^1_1$-random real. 
The set of random reals over $L_{\alpha+1}$ has measure $1$, and for  these reals $x$, we have $\omega_1^x=\om$ by Lemma \ref{preservation of admissibility by randoms}. 
Moreover $\omega_1^x>\omega_1$ if and only if there is an admissible ordinal in $L_{\omega_1^x}[x]$, hence the set of these reals is $\Pi^1_1$ by the Gandy-Spector theorem \cite[Theorem 5.5]{Hjorth-Vienna-notes-on-descriptive-set-theory}. 
Thus $\omega_1^x=\om$. 

In the other direction, let $A$ denote the largest $\Pi^1_1$ null set (see \cite[Theorem 5.2]{MR2340241} and Section \ref{subsection ITTM-random reals} below). 
Then $A\subseteq \{x\mid \omega_1^x<\om\}\cup \bigcup_{\alpha<\om} A_\alpha$, where $A_\alpha$ is a Borel set with a code in $L_{\om}$, by the Gandy-Spector theorem \cite[Theorem 5.5]{Hjorth-Vienna-notes-on-descriptive-set-theory}. 
Since $A$ is the largest $\Pi^1_1$ null set, equality holds. 
If $x$ is $\Delta^1_1$-random and $\omega_1^x=\om$, then $x\notin A_\alpha$ for all $\alpha<\om$ and hence $x\notin A$. 
\end{proof} 

\subsection{Side-by-side randoms} 

Two reals $x,y$ are \emph{side-by-side random} over $L_\alpha$ if $\langle x,y\rangle$ is random over $L_\alpha$ for the Lebesgue measure on ${}^{\omega}2\times{}^{\omega}2$. 
The following Lemma \ref{intersection from mutually randoms} is analogous to known results for arbitrary forcings over models of set theory, however the classical proof does not work in our setting. 

\begin{lemma} 
Suppose that $x,y$ are side-by-side random over $L_\alpha$. Then $x$ is random over $L_\alpha$. 
\end{lemma}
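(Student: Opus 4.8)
The plan is a direct Fubini argument carried out at the level of Borel codes, which sidesteps the usual forcing-theoretic proof (the one that factors through a projection between the one- and two-dimensional forcings, or through the measure algebra); that proof does not transfer here, since over an admissible $L_\alpha$ random forcing is a proper class and the measure algebra of $L_\alpha$ need not exist. What must be shown is that $x\notin B$ for every null Borel set $B\subseteq{}^\omega 2$ with a Borel code $c\in L_\alpha$, so fix such $B$ and $c$.

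The key step is to pass to the cylinder $\tilde B=B\times{}^\omega 2\subseteq{}^\omega 2\times{}^\omega 2$ and to produce a Borel code $\tilde c\in L_\alpha$ for it. The point is that the cylinder operation $A\mapsto A\times{}^\omega 2$ commutes with complements taken inside ${}^\omega 2\times{}^\omega 2$, with countable unions, and with countable intersections, and sends the basic clopen set $U_s$ to $U_s\times U_\emptyset$. Hence a code $\tilde c$ for $\tilde B$ is obtained from $c$ by leaving the underlying well-founded tree of $c$ unchanged and relabelling each leaf $U_s$ by $U_s\times U_\emptyset$. This is an absolute recursion of the simplest kind: the rank of $\tilde c$ exceeds that of $c$ by only a fixed finite amount, so $\tilde c\in L_\alpha$, using that $\alpha$ is, in all cases of interest, admissible or a limit of admissibles, which is enough for $L_\alpha$ to be closed under this recursion. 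One checks by induction on $c$ that $\tilde c$ codes exactly $B\times{}^\omega 2$.

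Next, $\mu(\tilde B)=0$: given $\varepsilon>0$, use outer regularity to pick an open $O\supseteq B$ with $\mu(O)<\varepsilon$, and write $O$ as a disjoint union of basic clopen sets $U_{s_i}$; then $O\times{}^\omega 2=\bigsqcup_i U_{s_i}\times{}^\omega 2$ is open, contains $\tilde B$, and has measure $\sum_i 2^{-|s_i|}=\mu(O)<\varepsilon$. So $\tilde B$ is a null Borel set with a code in $L_\alpha$. Since $\langle x,y\rangle$ is random over $L_\alpha$, we conclude $\langle x,y\rangle\notin\tilde B$, that is, $x\notin B$, which is what was required.

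The only point needing any care is the verification that the transformed code really is an element of $L_\alpha$ and codes the intended set; this is exactly the kind of absoluteness bookkeeping that the $\infty$-Borel codes and Boolean values introduced in the previous subsections are designed to make routine, and there is no measure-theoretic obstacle, since all measures here are computed in $V$. The analogous statement for $y$ is of course symmetric.
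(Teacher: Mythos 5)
Your argument is correct and is essentially the paper's own proof, which simply observes that $\langle x,y\rangle\in A\times{}^\omega 2$ for every measure-one Borel set $A$ coded in $L_\alpha$ and hence $x\in A$; your code-relabelling and the verification that $\mu(B\times{}^\omega 2)=0$ just spell out the routine details the paper leaves implicit. No gap to report.
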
 
\begin{proof} 
Suppose that $A$ is a Borel subset of ${}^{\omega}2$ with Borel code in $L_\alpha$. Then $\langle x,y\rangle\in A\times {}^{\omega}2$. Hence $x\in A$. 
\end{proof} 

\begin{lemma}\label{application of koenigs lemma} 
Suppose that $\langle A_s \mid s\in {}^{<\omega}2 \rangle$ is a sequence of Lebesgue measurable subsets of ${}^\omega 2$ such that 
$A_t\subseteq A_s$ for all $s\subseteq t$ in ${}^n 2$ 
and $\mu(\bigcap_n A_{x\upharpoonright n})=0$ for all $x\in {}^{\omega}2$. 
Then for every $\epsilon>0$, there is some $n$ such that for all $s\in {}^n 2$, $\mu(A_s)<\epsilon$. 
\end{lemma}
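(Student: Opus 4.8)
The plan is to argue by contradiction via König's lemma, which is exactly the flavour the lemma's label suggests. Suppose the conclusion fails for some fixed $\epsilon>0$; then for every $n$ there is at least one $s\in{}^n2$ with $\mu(A_s)\geq\epsilon$. Form the set
$T=\{\,s\in{}^{<\omega}2 \mid \mu(A_s)\geq\epsilon\,\}$.
The first thing I would check is that $T$ is a subtree of ${}^{<\omega}2$, i.e. downward closed: if $s\subseteq t$ then by the nesting hypothesis $A_t\subseteq A_s$, so $\mu(A_t)\leq\mu(A_s)$, and hence $t\in T$ forces $s\in T$. By the failure assumption, $T$ meets every level ${}^n2$, so $T$ is infinite.

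Next I would invoke König's lemma: ${}^{<\omega}2$ is finitely branching and $T$ is an infinite subtree, so $T$ has an infinite branch $x\in{}^\omega2$, meaning $x\upharpoonright n\in T$, i.e. $\mu(A_{x\upharpoonright n})\geq\epsilon$, for every $n$. Now the sequence $\langle A_{x\upharpoonright n}\mid n\in\omega\rangle$ is a decreasing chain of measurable subsets of the probability space ${}^\omega2$ (all of finite measure), so continuity of measure from above applies and gives $\mu(\bigcap_n A_{x\upharpoonright n})=\lim_n\mu(A_{x\upharpoonright n})\geq\epsilon>0$. This contradicts the hypothesis $\mu(\bigcap_n A_{x\upharpoonright n})=0$, and the lemma follows.

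I do not expect any real obstacle here: the argument is short, and the only two points that need a word of justification are that $T$ is genuinely downward closed (handled by monotonicity of $\mu$ together with the nesting of the $A_s$) and that continuity from above is legitimate (it is, since every $A_s$ sits inside ${}^\omega2$ and therefore has finite measure). If one wants to avoid the contrapositive phrasing, the same computation shows directly that $n\mapsto\max_{s\in{}^n2}\mu(A_s)$ is a nonincreasing sequence whose infimum must be $0$, again by König's lemma applied to the tree of nodes where this maximum stays $\geq\epsilon$.
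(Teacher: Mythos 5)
Your proof is correct and follows essentially the same route as the paper: the same tree $T=\{s\in{}^{<\omega}2\mid\mu(A_s)\geq\epsilon\}$, König's lemma to extract an infinite branch $x$, and then continuity of measure from above to get $\mu(\bigcap_n A_{x\upharpoonright n})\geq\epsilon$, contradicting the hypothesis. You merely spell out two details the paper leaves implicit (downward closure of $T$ and the legitimacy of continuity from above), which is fine.
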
 
\begin{proof} 
If the claim fails, then the tree $T=\{s\in {}^{<\omega}2\mid \mu(A_s)\geq\epsilon\}$ is infinite. 
By K\"onig's lemma, $T$ has an infinite branch $x\in {}^{\omega}2$. 
Then $\mu(\bigcap_n A_{x\upharpoonright n})\geq\epsilon$, contradicting the assumption. 
\end{proof} 

We use the forcing theorem for random forcing over admissible sets $L_\alpha$ to prove an analogue to the fact that the intersection of mutually generic extensions is equal to the ground model. 

\begin{lemma} \label{intersection from mutually randoms} 
Suppose that $L_\alpha$ is admissible or an increasing union of admissible sets and that $x,y$ are 
side-by-side random over $L_\alpha$. Then $L_\alpha[x]\cap L_\alpha[y]=L_\alpha$. 
\end{lemma}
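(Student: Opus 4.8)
The plan is to prove $L_\alpha[x]\cap L_\alpha[y]=L_\alpha$ by the usual ``name-splitting'' argument, adapted to the quasi-forcing relation. The inclusion $L_\alpha\subseteq L_\alpha[x]\cap L_\alpha[y]$ is immediate since every $a\in L_\alpha$ has a canonical check-name $\check a$ with $\check a^x=\check a^y=a$. For the converse, suppose $z\in L_\alpha[x]\cap L_\alpha[y]$; writing $z=\sigma^x=\tau^y$ for names $\sigma,\tau\in L_\alpha$, and using the product structure of the forcing (side-by-side random is random for the product measure on ${}^\omega2\times{}^\omega2$), I would find a condition in the product, of the form $p\times q$ with $\langle x,y\rangle\in[p]\times[q]$, which forces $\dot\sigma^{\dot x}=\dot\tau^{\dot y}$ over $L_\alpha$, where $\dot x,\dot y$ are the names for the two coordinates of the product-random real. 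The goal is to show this forces $\dot\sigma^{\dot x}\in\check L_\alpha$, i.e. that $z\in L_\alpha$.

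First I would set up a symmetry/mutual-genericity observation: if $\langle x,y\rangle$ is random over $L_\alpha$ for the product measure, then for (measure-one-many) fixed $y$, the slice $x$ is random over $L_\alpha[y]$, and symmetrically — this is a Fubini-type statement for the quasi-forcing relation, provable from Lemma~\ref{application of koenigs lemma} together with the forcing theorem (Lemma~\ref{forcing theorem}) and the definability of the Boolean values. Then I would argue as follows. Fix a value $a$ with $\sigma^x=a$; since $a\in L_\alpha[y]$ as well, and $x$ is random over $L_\alpha[y]$, there is a condition $p\in L_\alpha[y]$ with $x\in[p]$ forcing $\dot\sigma=\check a$ over $L_\alpha[y]$. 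But $p$ can in turn be captured by a name over $L_\alpha$, and by shrinking $q$ we may assume $q$ (containing $y$) decides which $p$ occurs. Now the key point: for every branch $x'\in[p]$ random over $L_\alpha$, we get $\sigma^{x'}=a$, so $a=\sigma^{x'}$ is forced to be the same regardless of $x'$ — more precisely, the Boolean value $\llbracket\dot\sigma=\check a\rrbracket$ has $[p]$ contained in it up to measure zero. Running the symmetric argument, and intersecting, one concludes that $a$ is already computed by the ground model names alone, hence $a\in L_\alpha$; the formal way to say this is that $\llbracket \exists w\in\check L_\alpha\ \dot\sigma^{\dot x}=w\rrbracket$ has full measure below $p\times q$, and then Lemma~\ref{forcing theorem} gives $z=\sigma^x\in L_\alpha$.

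The main obstacle I expect is exactly the Fubini/mutual-randomness step: in the classical setting one uses that the two-step iteration of generic filters factors as a product and that genericity is preserved, but here ``random over $L_\alpha$'' is not ``random generic over $L_\alpha$'' (as the excerpt emphasizes via Yu's result), so I cannot quote a product lemma for forcing and must instead argue directly with measures and Boolean values. Concretely, I would need: given a $\Sigma_1$-over-$L_\alpha$ family of product-Borel codes of measure one, the set of $y$ whose $x$-slice has measure one is itself of measure one and coded over $L_\alpha[y]$ in a $\Sigma_1$ way — this is where Lemma~\ref{application of koenigs lemma} (a König's-lemma packing argument) does the real work, turning ``every vertical section is null'' into uniform smallness, and where admissibility of $\alpha$ (or being a limit of admissibles) is used to bound the relevant ordinals. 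Once this slice lemma is in place, the rest is the standard name-pushing argument and should go through routinely using Lemmas~\ref{application of boolean values}, \ref{forcing theorem} and \ref{truth lemma}.
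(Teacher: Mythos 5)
Your plan reduces everything to two steps that are precisely the ones you cannot take for granted here, and neither is carried out. First, the Fubini/slice step (``$x$ is random over $L_\alpha[y]$'', plus the ability to run the quasi-forcing machinery over $L_\alpha[y]$): to invoke a truth lemma over $L_\alpha[y]$, to compute measures of sets coded in $L_\alpha[y]$ by $\Delta_1$-recursion, and to have definable Boolean values there, you need $L_\alpha[y]$ to be admissible (or a limit of admissibles). The hypothesis of Lemma~\ref{intersection from mutually randoms} only gives randomness over $L_\alpha$, whereas preservation of admissibility (Lemma~\ref{preservation of admissibility by randoms}) needs randomness over $L_{\alpha+1}$; so your appeal to ``a condition $p\in L_\alpha[y]$ forcing $\dot\sigma=\check a$ over $L_\alpha[y]$'' is not licensed by the lemmas you cite. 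Second, and more seriously, the concluding step (``running the symmetric argument, and intersecting, one concludes $a\in L_\alpha$'') is exactly the point where the classical mutual-genericity proof uses density/decision arguments for generic filters, and this is what fails for quasi-generics; the paper flags this explicitly before the lemma (``the classical proof does not work in our setting''). Your proposed formalization $\llbracket \exists w\in\check L_\alpha\ \dot\sigma^{\dot x}=w\rrbracket$ does not repair this: random forcing is a class forcing over $L_\alpha$, so $\check L_\alpha$ is not a name in $L_\alpha$, the Boolean value of such an unbounded existential is only a union over all names and need not lie in $L_\alpha$, and Lemma~\ref{application of boolean values} applies to it only for reals random over some $L_\beta$ in which $\alpha$ is countable --- again beyond the hypotheses.

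The paper's proof avoids the slice lemma entirely and is a direct product-measure computation. Take a name $\dot x$ for $z$ (a real) depending only on the first coordinate and a name $\dot y$ depending only on the second, view both as names for the side-by-side (product) random forcing, and suppose some condition $p$ with $\mu([p])\geq\epsilon>0$ forces $\dot x=\dot y$. By Lemma~\ref{forcing theorem}, $p$ forces $\bigvee_{s\in{}^k2}\,\dot x\upharpoonright k=\dot y\upharpoonright k=s$ for each $k$; setting $A_s=\llbracket\dot x\upharpoonright k=s\rrbracket$ and $B_s=\llbracket\dot y\upharpoonright k=s\rrbracket$, one gets $\mu([p]\setminus\bigcup_s A_s\times B_s)=0$. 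Lemma~\ref{application of koenigs lemma} (the packing lemma you correctly identified as central, but used here on the $A_s$ directly, not to prove a Fubini statement) yields a level $n$ with $\mu(A_s)<\epsilon$ for all $s\in{}^n2$, and then the product structure of the measure gives $\mu([p])\leq\sum_{s\in{}^n2}\mu(A_s)\mu(B_s)<\epsilon$, a contradiction; combined with the truth lemma over $L_\alpha$ for the product forcing this pins $z$ down in $L_\alpha$. In short, independence of the two coordinates of the product measure does all the work that mutual genericity does classically, and no forcing over $L_\alpha[y]$, no admissibility of the extension, and no quantification over $\check L_\alpha$ is ever needed. As it stands, your proposal has a genuine gap at both of the steps described above.
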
 
\begin{proof} 
Let $\mathbb{P}$ denote the random forcing on ${}^{\omega}2$ in $L_\alpha$ and $\mathbb{Q}$ the random forcing on ${}^{\omega}2\times{}^{\omega}2$ in $L_\alpha$. 
Suppose that $z\in L_\alpha[x]\cap L_\alpha[y]$. 
Moreover, suppose that $\dot{x}, \dot{y}$ are $\mathbb{P}$-names for $z$ with $\dot{x}^x=z$ and $\dot{y}^y=z$. 
We can assume that $\dot{x},\dot{y}$ are $\mathbb{Q}$-names by identifying them with the $\mathbb{Q}$-names induced by $\dot{x}$, $\dot{y}$. 
Then every Borel subset of ${}^{\omega}2$ that occurs in $\dot{x}$ is of the form $A\times {}^{\omega}2$ and every Borel subset of ${}^{\omega}2$ occuring in $\dot{y}$ is of the form ${}^{\omega}2\times A$. 




\begin{claim} 
No condition $p$ forces over $L_\alpha$ that $\dot{x}=\dot{y}$. 
\end{claim} 
\begin{proof} 
Suppose that $p\Vdash \dot{x}=\dot{y}$ and $\mu([p])\geq\epsilon>0$. 
Then $p\Vdash \bigvee_{s\in {}^k 2} \dot{x}\upharpoonright k=\dot{y}\upharpoonright k=s$ by Lemma \ref{forcing theorem}. 
Let $A_s=\llbracket \dot{x}\upharpoonright k=s\rrbracket$ and $B_s=\llbracket \dot{y}\upharpoonright k=s\rrbracket$. 
Then $\mu([p]\setminus \bigcup_{s\in {}^n 2} (A_s \times B_s))=0$ by Lemma \ref{application of boolean values}. 

There is some $n$ such that $\mu(A_s)<\epsilon$ for all $s\in {}^n 2$ by Lemma \ref{application of koenigs lemma}. 
Since $\sum_{s\in {}^n 2} \mu(B_s)=1$, 
$\sum_{s\in {}^n 2} \mu(A_s)\mu(B_s)<\epsilon$. 
The assumption $p\Vdash \dot{x}=\dot{y}$ implies that $\mu([p]\setminus \bigcup_{s\in {}^n 2} A_s\times B_s)=0$. 
Hence $\mu([p])\leq \mu(\sum_{s\in {}^n 2} \mu(A_s)\mu(B_s))<\epsilon $, 
contradicting the assumption that $\mu([p])\geq \epsilon$. 
\end{proof} 
This completes the proof of Lemma \ref{intersection from mutually randoms}. 
\end{proof}

\section{Computations from non-null sets} \label{section ITTMs} 

In this section, we prove an analogue to the following result of Sacks: any real that is computable from all elements of a set of positive measure is itself computable. This is essential to analyze randomness notions later.

\subsection{Facts about infinite time Turing machines} \label{subsection intro to ITTMs}

An infinite time Turing machine (ITTM) is a Turing machine that is allowed to run for an arbitrary ordinal time, with the rule of forming the inferior limit in each tape cell and of the (numbered) states in each limit step of the computation. 
The inputs and outputs of such machines are reals. 

We recall some basic facts about these machines (see \cite{MR1771072, MR2493990}). 
The computable sequences are here called \emph{writable} to distinguish this from the following concepts of computability. 
These notions from \cite{MR1771072} are interesting on their own and will be essential in the following proofs via results in \cite{MR2493990}. 

\begin{defini}(See \cite{MR1771072}) \label{definition: writable etc} 
\begin{enumerate-(a)} 
\item 
A real $x$ is \emph{writable} (or \emph{computable}) if and only if there is an ITTM-program $P$ such that $P$, when run on the empty input, halts with $x$ written on the output tape.
\item 
A real $x$ is \emph{eventually writable} if and only if there is an ITTM-program $P$ such that $P$, when run on the empty input, has from some point of time on $x$ written on the output tape and never changes
the content of the output tape from this time on.
\item 
A real $x$ is \emph{accidentally writable} if and only if there is an ITTM-program $P$ such that $P$, when run with empty input, has $x$ written on the output tape at some time (but may overwrite this later on).
\end{enumerate-(a)} 
\end{defini}

We write $P^x\downarrow=i$ if $P^x$ halts with output $i$. 
The notation $\Sigma_n$ will always refer to the standard Levy hierarchy, obtained by counting the number of quantifier changes around a $\Delta_0$ kernel. 

The ordinal $\lambda$ is defined as the supremum of the halting times of ITTM-computations (i.e. the \emph{clockable ordinals}), and equivalently \cite[Theorem 1.1]{MR1734198} the supremum of the writable ordinals, i.e. the ordinals coded by writable reals. 
Moreover, $\zeta$ is defined as the supremum of the eventually writable ordinals, and $\Sigma$ is the supremum of the accidentally writable ordinals. 
The ordinals $\lambda^x$, $\zeta^x$ and $\Sigma^x$ are defined relative to an oracle $x$. 

We will use the following theorem by Welch \cite[Theorem 1, Corollary 2]{MR2493990}. 

\begin{thm} \label{lambdazetasigma} (see \cite[Theorem 1, Corollary 2]{MR2493990}) 
Suppose that $y$ is a real. 
Then $\lambda^{y},\zeta^{y},\Sigma^{y}$ have the following properties. 
\begin{enumerate-(1)} 
\item 
$L_{\zeta^{y}}[y]$ is the set of writable reals in $y$. 
\item 
$L_{\zeta^{y}}[y]$ is the set of eventually writable reals in $y$. 
\item 
$L_{\Sigma^{y}}[y]$ is the set of accidentally  reals in $y$. 
\end{enumerate-(1)} 
Moreover $(\lambda^{y},\zeta^{y},\Sigma^{y})$ is the lexically minimal triple of ordinals with 
$$L_{\lambda^{y}}[y]\prec_{\Sigma_{1}}L_{\zeta^{y}}[y]\prec_{\Sigma_{2}}L_{\Sigma^{y}}[y].$$ 

\end{thm}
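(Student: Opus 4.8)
The plan is to reconstruct Welch's analysis, whose engine is that an ITTM run carries an ordinal clock which can be matched level by level against the constructible hierarchy over the oracle $y$. Items (1)--(3) and the elementarity/minimality assertion are two faces of this matching, so I would prove them together. The technical inputs are a \emph{snapshot lemma}, a \emph{universal enumeration machine}, and the \emph{eventual periodicity} of the universal ITTM. First I would prove the snapshot lemma: the configuration (tape contents, head position, state) of any ITTM program $P$ on input $y$ at ordinal time $\alpha$ is uniformly $\Delta_1$-definable over $L_{\alpha+1}[y]$ from $\alpha$ and $y$, the only non-routine point being absoluteness of the liminf rule at limits. Hence any real occurring on a tape lies in some $L_\beta[y]$, and since every halting computation halts before $\lambda^y$, every writable real lies in $L_{\lambda^y}[y]$. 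For the converse direction let $E$ be the program that builds $L[y]$ level by level along its own clock, printing each new real as it is constructed; $E$ never halts, so it runs through all ordinal times, so every real of $L[y]$ is written at some stage and is accidentally writable, and a real of $L_\gamma[y]$ with $\gamma<\lambda^y$ is writable (feed $E$ a writable code for some $\gamma'\ge\gamma$ and halt once $L_{\gamma'}[y]$ is built). This gives writable $=L_{\lambda^y}[y]\cap{}^\omega2$ outright, and the inclusions $L_{\Sigma^y}[y]\cap{}^\omega2\subseteq$ accidentally writable and $L_{\zeta^y}[y]\cap{}^\omega2\subseteq$ eventually writable.

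The remaining inclusions require Welch's eventual-periodicity lemma for the universal machine $U^y$ running all programs in parallel: there is a least ordinal whose $U^y$-snapshot recurs, namely $\zeta^y$; the least later recurrence is at $\Sigma^y$; and $U^y$ is periodic with that period from $\zeta^y$ onwards. Granting this, any tape in any computation that ever stabilizes has already reached its final value by $\zeta^y$, and any real that ever appears has already appeared before $\Sigma^y$; combined with the snapshot lemma this closes the loops, giving eventually writable $=L_{\zeta^y}[y]\cap{}^\omega2$ and accidentally writable $=L_{\Sigma^y}[y]\cap{}^\omega2$.

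For the elementarity I would use search machines. Given a $\Sigma_1$ statement $\exists u\,\varphi(u,p)$ with $p$ writable that holds in $L_{\zeta^y}[y]$, the machine that searches $L[y]$ in the constructible order for a witness, checking the $\Delta_0$ kernel on codes, halts, hence halts before $\lambda^y$, hence exhibits a witness already in $L_{\lambda^y}[y]$; thus $L_{\lambda^y}[y]\prec_{\Sigma_1}L_{\zeta^y}[y]$. For $L_{\zeta^y}[y]\prec_{\Sigma_2}L_{\Sigma^y}[y]$, run the analogous search for a witness to $\exists u\,\forall v\,\varphi(u,v,p)$ with $p$ eventually writable, but verify the $\Pi_1$ part "eventually": keep testing all $v$ and advance the candidate $u$ whenever a counterexample surfaces; the output stabilizes, so by the periodicity lemma it stabilizes before $\zeta^y$ on a witness good in $L_{\zeta^y}[y]$. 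Finally, lexical minimality of $(\lambda^y,\zeta^y,\Sigma^y)$: at the first coordinate where a putatively smaller triple diverges, the corresponding halting-, stabilization-, or appearance-search machine would witness a failure of $\Sigma_1$- (resp.\ $\Sigma_2$-) reflection there, because $\lambda^y,\zeta^y,\Sigma^y$ are precisely the first heights at which those machines have finished their job; I would make this precise by identifying the three ordinals with the appropriate stable ordinals.

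The hard part will be Welch's eventual-periodicity lemma --- that the universal computation genuinely loops, with loop-start and loop-end equal to the suprema of the eventually writable and the accidentally writable ordinals. This needs a delicate liminf analysis at limit stages and carries the real content of the theorem; everything else is bookkeeping around the snapshot lemma and the search machines.
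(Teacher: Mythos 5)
First, a point of reference: the paper does not prove this theorem at all; it is imported verbatim from Welch \cite{MR2493990} (and the paper's statement even carries typos --- item (1) should read $L_{\lambda^{y}}[y]$ and item (3) is missing the word ``writable''). So your proposal is really a reconstruction of Welch's proof, and its architecture is the right one: a snapshot-absoluteness lemma, the eventual-periodicity analysis of the universal machine with loop start $\zeta^{y}$ and first recurrence $\Sigma^{y}$, search machines for the two reflection statements, and minimality read off from the same machines. You also correctly identify the periodicity lemma as the place where the real work happens.

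Two of your steps are, however, wrong or circular as written. (a) The claim that your enumeration machine $E$ ``runs through all ordinal times, so every real of $L[y]$ is written at some stage and is accidentally writable'' is false, and it contradicts both item (3) and your own later remark that anything that ever appears does so before $\Sigma^{y}$: the accidentally writable reals are exactly those in $L_{\Sigma^{y}}[y]$, a proper initial segment of the reals of $L[y]$, and a machine trying to build $L[y]$ level by level starts looping and never produces codes for levels of height at least $\Sigma^{y}$. The correct argument for $L_{\Sigma^{y}}[y]\cap{}^{\omega}2\subseteq$ accidentally writable goes through accidentally writable ordinal codes cofinal in $\Sigma^{y}$: simulate the universal machine, at each stage interpret the current content of a designated tape as an ordinal code, unfold $L[y]$ up to it, and copy the reals found onto the output tape. (b) In the $\Sigma_{1}$-reflection you assert that the machine searching $L[y]$ in constructible order for a witness ``halts, hence halts before $\lambda^{y}$''. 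Nothing guarantees that such a naive search halts: if the least level containing a witness lay between $\lambda^{y}$ and $\zeta^{y}$, the search would have to run at least that long, so invoking ``halting computations halt before $\lambda^{y}$'' presupposes what is to be proved. The repair is to drive the search by an eventually writable code for an ordinal above the witness level (eventually writable ordinals are cofinal in $\zeta^{y}$), so the computation does halt, and then to use snapshot absoluteness together with admissibility of $L_{\lambda^{y}}[y]$ to collapse the coded level and pull the witness below $\lambda^{y}$; the $\Sigma_{2}$ case similarly needs the characterization of $\zeta^{y}$ via stabilization and the loop structure, not merely ``the output stabilizes''. With these repairs your outline does match Welch's proof, but as it stands these two steps are genuine gaps.
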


It is worthwhile to note that the precise definition of the Levy hierarchy is important for the reflection in Theorem \ref{lambdazetasigma}. 
The characterization of $\lambda$, $\zeta$ and $\Sigma$ 
fails if we allow arbitrary additional bounded quantifiers in the Levy hierarchy, since this variant of $\Sigma_2$-formulas allows to express the fact that a set is admissible. However, $L_\zeta$ is admissible \cite[Fact 2.2]{MR2493990}, but $L_\Sigma$ is not admissible \cite[Lemma 6]{MR2493990}. 

We will also use the following information about $\lambda$, $\zeta$ and $\Sigma$. 

\begin{thm}\label{ITTMcharacteristics} 
\begin{enumerate-(a)} 
\item 
If the output of an ITTM-program $P$ stabilizes, then it stabilizes before time $\zeta$. 
\item 
All non-halting ITTM-computations loop from time $\Sigma$ on. 
\item 
$\lambda$ and $\zeta$ are admissible limits of admissible ordinals 
(and more). 
\item \label{ITTMcharacteristics every set is countable} 
In $L_\lambda$ every set is countable, and the same holds for $L_\zeta$ and $L_\Sigma$. 
\end{enumerate-(a)} 
Moreover, all of these statements relativize to oracles. 
\end{thm}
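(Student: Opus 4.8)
The plan is to obtain all four items, together with their relativizations, from Welch's analysis of $\lambda$, $\zeta$ and $\Sigma$ (Theorem~\ref{lambdazetasigma} and \cite{MR2493990}) together with the basic properties of ITTMs from \cite{MR1771072}; most of the work is assembling known facts, so I will only indicate the arguments that actually need one. The common engine for \textbf{(a)} and \textbf{(b)} is Welch's looping theorem \cite{MR2493990}: for a universal ITTM $U$ run on empty input, if $c_t$ denotes the configuration at stage $t$, then $c_\zeta=c_\Sigma$ and the configuration sequence is periodic on $[\zeta,\infty)$ with the interval $[\zeta,\Sigma)$ as a period; hence a tape cell changes cofinally often below $\Sigma$ if and only if it is non-constant on $[\zeta,\Sigma)$. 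Since an arbitrary program $P$ on empty input can be simulated inside $U$, a faithful copy of its computation has the same periodicity from stage $\zeta$ on. For \textbf{(b)} this periodicity is precisely the statement that a non-halting computation loops from time $\Sigma$ on, and $\Sigma$ is a uniform bound because $\Sigma=\Sigma^U$. For \textbf{(a)}, if the output tape of $P$ stabilizes, say to $y$, then every output cell is eventually constant, hence does not change cofinally often below $\Sigma$, hence is constant on $[\zeta,\Sigma)$, hence by periodicity constant on all of $[\zeta,\infty)$; so the output already equals $y$ at every stage $\geq\zeta$. To push stabilization below $\zeta$ one uses that $L_\zeta$ is admissible \cite[Fact~2.2]{MR2493990}: if the output differed from $y$ at stages cofinal in $\zeta$, the corresponding set of stages is $\Sigma_1$-definable over $L_\zeta$ (the computation of $P$ below any stage $<\zeta$ is an element of $L_\zeta$), yielding a $\Sigma_1$-definable cofinal map $\omega\to\zeta$, which contradicts admissibility.

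For \textbf{(c)}, admissibility of $L_\zeta$ is \cite[Fact~2.2]{MR2493990}; admissibility of $L_\lambda$ follows from $L_\lambda\prec_{\Sigma_1}L_\zeta$ (Theorem~\ref{lambdazetasigma}) by the standard transfer argument, namely each instance of $\Delta_0$-collection in $L_\lambda$ reflects up to the corresponding $\Sigma_1$ statement in $L_\zeta$, which holds there by admissibility, and reflects back down. That $\lambda$ and $\zeta$ are limits of admissibles, and the stronger closure properties hidden in ``(and more)'' (recursive inaccessibility, recursive Mahloness, etc.), are the familiar higher-recursion-theoretic facts about these ordinals, which I would simply cite from \cite{MR1771072,MR2493990}; the underlying point is that an ITTM, from a (eventually) writable code for an ordinal $\gamma$, can construct the levels of $L$ above $\gamma$ and halt (resp.\ stabilize) once admissibility is detected, so the relevant ordinals remain below $\lambda$ (resp.\ $\zeta$).

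For \textbf{(d)}, let $x\in L_\lambda$, so $x\in L_\gamma$ for some $\gamma<\lambda$. The writable ordinals form an initial segment of $\Ord$, so $\gamma$ is writable, and from a writable code for $\gamma$ an ITTM computes, and halts with, a real coding the structure $(L_\gamma,\in)$ on underlying set $\omega$; this real is writable, hence belongs to $L_\lambda\cap{}^\omega2$ by Theorem~\ref{lambdazetasigma}, hence to some $L_\delta$ with $\delta<\lambda$. Thus $L_\lambda\models$``$L_\gamma$ is countable'', and a fortiori ``$x$ is countable''. The same argument, with ``writable'' replaced by ``eventually writable'' resp.\ ``accidentally writable'' and $\lambda$ by $\zeta$ resp.\ $\Sigma$, using that the eventually (resp.\ accidentally) writable reals are exactly $L_\zeta\cap{}^\omega2$ (resp.\ $L_\Sigma\cap{}^\omega2$) so that every ordinal below $\zeta$ (resp.\ $\Sigma$) has such a code, gives the statements for $L_\zeta$ and $L_\Sigma$; alternatively one proves it for $L_\Sigma$ and transfers it downward along $L_\lambda\prec_{\Sigma_1}L_\zeta\prec_{\Sigma_2}L_\Sigma$. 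Every step above uses only Theorem~\ref{lambdazetasigma}, closure of initial segments of $L$ under ITTM-computation, and admissibility, all of which hold relative to an arbitrary oracle $y$ with $L_\gamma[y]$ in place of $L_\gamma$, so the relativized statements follow verbatim.

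The hard part will be \textbf{(a)}: upgrading ``the output equals $y$ from stage $\zeta$ on'' to ``the output has already stabilized strictly before $\zeta$'' requires care with the $\liminf$ rule at the limit stage $\zeta$ and with the verification that a not-yet-stabilized output genuinely produces a $\Sigma_1$-definable cofinal $\omega$-sequence into $\zeta$. A lesser nuisance is making precise the ``initial segment'' claims for the eventually and accidentally writable ordinals used in \textbf{(d)}, which is exactly where the identifications of these classes of reals with $L_\zeta\cap{}^\omega2$ and $L_\Sigma\cap{}^\omega2$ enter.
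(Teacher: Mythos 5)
The paper does not actually prove this theorem; it is quoted from Hamkins--Lewis and Welch with a pointer to \cite{MR1771072, MR2493990}, so there is no internal proof to compare against. Your assembly of (b), (c), (d) from Theorem \ref{lambdazetasigma} is essentially right: the transfer of admissibility from $L_\zeta$ to $L_\lambda$ along $\Sigma_1$-elementarity is the standard argument (and is the same one the paper uses inside Lemma \ref{Sigma1 reflection}), and the countability of every set in $L_\lambda$, $L_\zeta$, $L_\Sigma$ via writable/eventually writable/accidentally writable codes for the levels $L_\gamma$ is correct, granted the identification of these classes of reals with the reals of $L_\lambda$, $L_\zeta$, $L_\Sigma$. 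Two minor caveats: in (a)/(b) the detour through the universal machine needs the (standard but not free) remark that simulation overhead does not push stages past $\zeta$ or $\Sigma$; it is cleaner to apply Welch's looping argument to the program $P$ itself.

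The genuine gap is exactly where you flagged it, in (a). From ``the output equals $y$ at every stage $\geq\zeta$'' you try to get stabilization strictly before $\zeta$ by claiming that a cofinal set of stages below $\zeta$ at which the output differs from $y$ yields a $\Sigma_1$-definable cofinal map $\omega\to\zeta$, contradicting admissibility. A cofinal subset of $\zeta$ is not a cofinal $\omega$-sequence, and the natural way to produce one --- for each output cell $i$, take the least stage from which cell $i$ agrees with $y(i)$ up to $\zeta$ --- has a graph requiring a universal quantifier over all stages below $\zeta$, so it is only $\Sigma_2$-definable over $L_\zeta$; admissibility gives $\Sigma_1$-bounding and does not bound it. Nor does anything local rule out the bad scenario: for a cell with $y(i)=0$ the liminf rule is perfectly compatible with cofinal flashing below $\zeta$ followed by the constant value $0$ from $\zeta$ on, i.e.\ with stabilization exactly at $\zeta$. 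The standard repair uses the other half of the theorem you quote, namely $L_\zeta\prec_{\Sigma_2}L_\Sigma$: since the computation history of $P$ up to any stage $t<\Sigma$ lies in $L_\Sigma$ and is uniformly $\Delta_1$ there, the statement ``there is a stage $\sigma$ such that the output at every later stage equals the output at stage $\sigma$'' is $\Sigma_2$; it holds in $L_\Sigma$ (witnessed by $\sigma=\zeta$, using your periodicity argument), hence it holds in $L_\zeta$, giving $\sigma<\zeta$ with the output constant on $[\sigma,\zeta)$, and then constant from $\sigma$ onwards by the liminf rule at $\zeta$ and the constancy beyond $\zeta$. With that substitution your outline matches the intended (cited) proofs.
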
 
The proofs can be found in \cite{MR1771072, MR2493990}. 
We will write $x\leq_{w}y$, $x\leq_{ew}y$, $y\leq_{aw}y$ to indicate that $x$ is writable, eventually writable or accidentally writable, respectively, in the oracle $y$.

\begin{lemma} \label{characterization of ITTM-semidecidable} 
The following are equivalent for a subset $A$ of ${}^\omega 2$. 
\begin{enumerate-(a)} 
\item 
$A$ is ITTM-semidecidable. 
\item 
There is $\Sigma_1$-formula $\varphi(x)$ such that for all $x\in {}^\omega 2$, $x\in A$ if and only $L_{\lambda^x}[x]\models \varphi(x)$. 
\end{enumerate-(a)} 
\end{lemma}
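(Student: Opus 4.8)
The plan is to characterize ITTM-semidecidable sets of reals via $\Sigma_1$-definability over the models $L_{\lambda^x}[x]$, using the correspondence between writable objects and the $L$-hierarchy provided by Theorem~\ref{lambdazetasigma} and Theorem~\ref{ITTMcharacteristics}.

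\medskip
\textbf{(b) $\Rightarrow$ (a).} Suppose $\varphi(x)$ is $\Sigma_1$ and $x\in A$ iff $L_{\lambda^x}[x]\models\varphi(x)$. To semidecide $A$ on input $x$, I would run an ITTM that successively writes the reals coding the levels $L_\beta[x]$ for $\beta$ ranging over the writable-in-$x$ ordinals (using that $L_{\lambda^x}[x]$ is exactly the set of writable-in-$x$ reals, by Theorem~\ref{lambdazetasigma}(1), and that one can uniformly enumerate codes for initial segments of the $L$-hierarchy), and at each stage checks whether the $\Sigma_1$ statement $\varphi(x)$ is witnessed in the current level $L_\beta[x]$. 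Since $\varphi$ is $\Sigma_1$ and $L_{\lambda^x}[x]\models\varphi(x)$ iff the witness appears in some $L_\beta[x]$ with $\beta<\lambda^x$ (here one uses that $\lambda^x$ is admissible, Theorem~\ref{ITTMcharacteristics}(c), so $\Sigma_1$ truth in $L_{\lambda^x}[x]$ reflects to a proper initial segment), the machine halts precisely when $x\in A$. If $x\notin A$, no witness is ever found and the computation runs forever; this is exactly semidecidability.

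\medskip
\textbf{(a) $\Rightarrow$ (b).} Conversely, suppose $A$ is ITTM-semidecidable, say by a program $P$, so that $x\in A$ iff $P^x{\downarrow}$. The halting computation $P^x$, if it halts, halts at some writable-in-$x$ ordinal time, i.e.\ before $\lambda^x$, and the entire computation (the sequence of tape contents and states up to the halting time) is an object that is $\Sigma_1$-definable over $L_{\lambda^x}[x]$: the assertion ``there exists an ordinal $\gamma$ and a function $c$ with domain $\gamma+1$ coding a halting run of $P$ on input $x$ according to the ITTM transition and liminf rules'' is a $\Sigma_1$ formula $\varphi(x)$ (the correctness of a run is a $\Delta_0$, indeed arithmetic, condition on $c$). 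If $x\in A$, such a run exists in $V$, it is countable and has writable length, hence lives inside $L_{\lambda^x}[x]$ (every set in $L_\lambda$ is countable, Theorem~\ref{ITTMcharacteristics}(\ref{ITTMcharacteristics every set is countable}), and $\lambda^x$ is admissible so it is closed under the relevant constructions), so $L_{\lambda^x}[x]\models\varphi(x)$. If $x\notin A$, there is no halting run at all, so certainly none in $L_{\lambda^x}[x]$, giving $L_{\lambda^x}[x]\not\models\varphi(x)$.

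\medskip
\textbf{Main obstacle.} The delicate point is the equivalence ``$P^x$ halts'' $\Leftrightarrow$ ``$P^x$ halts at a time $<\lambda^x$'' $\Leftrightarrow$ ``$L_{\lambda^x}[x]$ sees a halting run''. One direction is the definition of $\lambda^x$ as the supremum of clockable-in-$x$ ordinals; the reverse, that a halting run is actually an element of (not merely $\Sigma_1$ over) the structure in which we evaluate $\varphi$, rests on admissibility of $\lambda^x$ together with the absoluteness of the ITTM transition rules between $L_{\lambda^x}[x]$ and $V$ — in particular that the liminf rule at limit stages is computed the same way inside the model. I would cite Theorem~\ref{lambdazetasigma} and Theorem~\ref{ITTMcharacteristics} for these facts and spell out only the formula $\varphi$ and the reflection step. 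Care is also needed that $\varphi$ genuinely lies at the $\Sigma_1$ level of the Levy hierarchy as defined in the paper (no hidden unbounded quantifiers inside the ``run'' predicate), which is why the run is coded as a single function and its correctness is phrased with bounded quantifiers only.
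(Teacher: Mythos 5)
Your proposal is correct and follows essentially the same route as the paper: for (a)$\Rightarrow$(b) the $\Sigma_1$-formula asserts the existence of a halting run (which lies in $L_{\lambda^x}[x]$ since halting times are below $\lambda^x$), and for (b)$\Rightarrow$(a) one searches through writable-in-$x$ codes for initial segments of $L_{\lambda^x}[x]$ for a witness to $\varphi(x)$, using Theorem~\ref{lambdazetasigma}. You have merely spelled out the absoluteness and reflection details that the paper leaves implicit.
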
 
\begin{proof} 
In the forward direction, the $\Sigma_1$-formula simply states the existence of a halting computation. 
In the other direction, we can search for a writable code for an initials segment of $L_{\lambda^x}[x]$ which satisfies $\varphi(x)$, using the fact that every set in $L_{\lambda^x}[x]$ has a writable code in $x$ by Theorem \ref{lambdazetasigma}. 
\end{proof} 

We call a subset of $2^{<\omega}$ \emph{enumerable} if there is an \ITTM\ listing its elements. 
It follows from Lemma \ref{characterization of ITTM-semidecidable} that it is equivalent for a subset $A$ of $2^{<\omega}$ that $A$ is semidecidable, $A$ is enumerable or that $A$ is $\Sigma_1$-definable over $L_{\lambda}$. 

Note that every \ITTM-semidecidable set is absolutely $\Delta^1_2$, i.e. it remains $\Delta^1_2$ with the same definition in any inner model and in any forcing extension. Therefore such sets are Lebesgue measurable and have the property of Baire by \cite[Exercise 14.4]{MR2731169}.

\subsection{Preserving reflection properties by random forcing} \label{subsection reflection} 

The following reflection argument is an essential step in the proof of the preservation of $\lambda$, $\zeta$ and $\Sigma$ (see Section \ref{subsection writable reals relative to many oracles} below) with respect to random forcing. 
We show that for admissibles or limits of admissibles $\alpha<\beta$, the statements $L_\alpha\prec_{\Sigma_1} L_\beta$ and $L_\alpha\prec_{\Sigma_2} L_\beta$ are preserved to generic extensions by sufficiently random reals (i.e. $L_\alpha[x] \prec_{\Sigma_n} L_\beta[x]$ holds for all sufficiently random reals $x$). 

\begin{definition} 
Suppose that $A$ is a Lebesgue measurable subset of ${}^\omega 2$. 
An element $x$ of ${}^\omega 2$ is a \emph{(Lebesgue) density point of $A$} if $\lim_n \frac{\mu(A\cap U_{x\upharpoonright n})}{\mu(U_{x\upharpoonright n})}=1$. 
Let $D(A)$ denote the set of density points of $A$. 
\end{definition} 

We will often use the following version of Lebesgue's density theorem. 
\begin{thm} \label{Lebesgue density theorem} (Lebesgue, see \cite[Section 8]{MR3003923}) 
If $A$ is any Lebesgue measurable subset of ${}^\omega 2$, 
then $\mu(A\triangle D(A))=0$. 
\end{thm}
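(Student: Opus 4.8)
The plan is to prove the two halves $\mu(A\setminus D(A))=0$ and $\mu(D(A)\setminus A)=0$ separately, deriving the first from the second applied to the complement. For a measurable $A\subseteq{}^\omega 2$ write $f_n(x)=\frac{\mu(A\cap U_{x\upharpoonright n})}{\mu(U_{x\upharpoonright n})}=2^n\mu(A\cap U_{x\upharpoonright n})$, so that $x\in D(A)$ iff $\lim_n f_n(x)=1$, and note that the analogous quantity for $A^c$ is $1-f_n(x)$. The key lemma is: for every measurable $A$ and every rational $q\in(0,1)$, the set $N_q=\{x\notin A:\limsup_n f_n(x)>q\}$ is $\mu$-null. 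Granting this, $\{x\notin A:\lim_n f_n(x)\neq 0\}\subseteq\bigcup_{q\in\mathbb Q\cap(0,1)}N_q$ is null, so for $\mu$-a.e.\ $x\notin A$ we have $\lim_n f_n(x)=0$, hence $x\notin D(A)$; this is $\mu(D(A)\setminus A)=0$. Applying the same to $A^c$ shows that for $\mu$-a.e.\ $x\in A$ one has $\lim_n(1-f_n(x))=0$, i.e.\ $x\in D(A)$, which is $\mu(A\setminus D(A))=0$; adding the two gives $\mu(A\triangle D(A))=0$.

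To prove the key lemma I would use a Vitali-type covering argument, which is particularly clean in Cantor space because the basic clopen sets $U_s$ form a tree: any two of them are either nested or disjoint. Fix $\delta>0$. By outer regularity of $\mu$ choose an open set $O\supseteq A^c$ with $\mu(O\cap A)<\delta$. For each $x\in N_q$, since $x\in O$ and $O$ is open, all sufficiently long cylinders $U_{x\upharpoonright n}$ are contained in $O$; and since $\limsup_n f_n(x)>q$, infinitely many $n$ satisfy $\mu(A\cap U_{x\upharpoonright n})>q\,\mu(U_{x\upharpoonright n})$. Hence the family $\mathcal C=\{U_s : U_s\subseteq O,\ \mu(A\cap U_s)>q\,\mu(U_s)\}$ covers $N_q$. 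Because cylinder lengths are finite naturals, every member of $\mathcal C$ lies below a $\subseteq$-maximal member of $\mathcal C$, and distinct maximal members are incomparable, hence disjoint. Letting $(U_{s_i})_{i\in I}$ enumerate the maximal members, we get $N_q\subseteq\bigcup_{i\in I}U_{s_i}\subseteq O$ with the $U_{s_i}$ pairwise disjoint, so
\[
\mu(N_q)\le\sum_{i\in I}\mu(U_{s_i})<\frac1q\sum_{i\in I}\mu(A\cap U_{s_i})=\frac1q\,\mu\Bigl(A\cap\bigcup_{i\in I}U_{s_i}\Bigr)\le\frac1q\,\mu(A\cap O)<\frac\delta q.
\]
Since $\delta>0$ was arbitrary, $\mu(N_q)=0$, proving the lemma.

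I do not expect a genuine obstacle here: the one step that needs care is the covering extraction, and in ${}^{<\omega}2$ it is painless since passing to $\subseteq$-maximal cylinders replaces the general Vitali covering lemma, the only subtlety being to first shrink the cover inside an outer-regular open approximation $O$ of $A^c$ so that $\sum_i\mu(A\cap U_{s_i})$ is controlled by $\delta$. As an alternative one could simply invoke Doob's martingale convergence theorem: with $\mathcal F_n=\sigma(U_s:s\in{}^n2)$ the sequence $f_n=E[\mathbf 1_A\mid\mathcal F_n]$ is a uniformly bounded martingale, hence converges $\mu$-a.e.; identifying the limit with $\mathbf 1_A$ (which holds on a generating $\pi$-system and extends by a monotone class argument) gives $\lim_n f_n(x)=\mathbf 1_A(x)$ for a.e.\ $x$, and the conclusion $\mu(A\triangle D(A))=0$ is immediate. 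I would present the elementary covering argument as the main proof, since it keeps the section self-contained, and mention the martingale route as a remark.
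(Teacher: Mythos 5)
Your proof is correct, and it is worth noting that the paper itself does not prove this statement at all: it is quoted as a known classical result with a citation (Lebesgue; see the reference to Andretta--Camerlo), so there is no in-paper argument to compare against. Your reduction to the key lemma is sound: for each rational $q\in(0,1)$ the set $N_q=\{x\notin A:\limsup_n f_n(x)>q\}$ is null, which gives $\mu(D(A)\setminus A)=0$, and applying this to the complement gives $\mu(A\setminus D(A))=0$. The covering argument also checks out, and it exploits exactly the feature that makes the Cantor-space case easier than the Euclidean one: basic clopen sets are nested or disjoint, so passing to $\subseteq$-maximal cylinders of the family $\mathcal C=\{U_s:U_s\subseteq O,\ \mu(A\cap U_s)>q\,\mu(U_s)\}$ replaces the Vitali covering lemma, and the preliminary shrinking inside an outer-regular open $O\supseteq A^c$ with $\mu(O\cap A)<\delta$ is precisely what controls $\sum_i\mu(A\cap U_{s_i})\le\mu(A\cap O)<\delta$. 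Two cosmetic points: the bound $\mu(N_q)\le\sum_i\mu(U_{s_i})$ uses that $N_q$ is measurable (it is, since each $f_n$ is constant on length-$n$ cylinders, hence Borel, and $A$ is measurable), or else one should phrase it with outer measure; and the strict inequality in your display needs the index set to be nonempty, though the empty case is trivial and the weak inequality $\mu(N_q)\le\delta/q$ is all you use. The martingale remark is also fine ($f_n=E[\mathbf 1_A\mid\mathcal F_n]$ converges a.e.\ to $\mathbf 1_A$ by L\'evy's upward theorem, modulo replacing $A$ by a Borel set differing from it by a null set), and it is the slicker route if one is willing to import measure-theoretic probability; your elementary covering argument has the advantage of keeping the statement self-contained, which the paper chose not to do.
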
 

We now prove $\Sigma_1$ reflection and then $\Sigma_2$-reflection in random extensions, from a stronger hypothhesis. 

\begin{lemma}  \label{Sigma1 reflection} 
Suppose that $\alpha<\beta$, $\beta$ is 
admissible or a limit of admissibles and $L_\alpha\prec_{\Sigma_1} L_\beta$. 
If $x$ is random over $L_\beta$, then $L_\alpha[x]\prec_{\Sigma_1} L_\beta[x]$. 
\end{lemma}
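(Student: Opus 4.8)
The plan is to establish the two directions of $\Sigma_1$-elementarity separately. Since every name in $L_\alpha$ lies in $L_\beta$, and every Borel null set with a code in $L_\alpha$ has a code in $L_\beta$, we get $L_\alpha[x]\subseteq L_\beta[x]$ and that $x$ is random over $L_\alpha$ as well; as $\Sigma_1$-formulas are upward absolute between transitive sets, only the downward direction requires work. So fix a $\Delta_0$-formula $\psi$ and names $\bar\sigma=\sigma_0,\dots,\sigma_n\in L_\alpha$, and write $E^{M}=\bigcup\{\llbracket\psi(\tau,\bar\sigma)\rrbracket\mid\tau\in M\}$ for a transitive $M\ni\bar\sigma$. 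By Lemma~\ref{application of boolean values}, applied to the $\Delta_0$-formula $\psi$ over $L_\beta$ and over $L_\alpha$, one has $L_\beta[x]\vDash\exists y\,\psi(y,\bar\sigma^x)$ iff $x\in E^{L_\beta}$, and likewise with $\alpha$ in place of $\beta$; so it suffices to show $x\in E^{L_\beta}\Rightarrow x\in E^{L_\alpha}$. I write the argument for $\beta$ admissible (so that $L_\alpha$ is admissible too, as $L_\alpha\prec_{\Sigma_1}L_\beta$); the limit-of-admissibles case reduces to this in the standard way.

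The structural point is that $\alpha<\beta$, so $L_\alpha\in L_\beta$; hence $E^{L_\alpha}$, being the union indexed by the \emph{set} $L_\alpha$ of the $\infty$-Borel sets $\llbracket\psi(\tau,\bar\sigma)\rrbracket$, is an $\infty$-Borel set with a code in $L_\beta$, even though $E^{L_\beta}$ is only class-definable over $L_\beta$. Granting for a moment that $\mu(E^{L_\alpha})=\mu(E^{L_\beta})$, we finish as follows. If $x\in E^{L_\beta}$, fix $\tau^\ast\in L_\beta$ with $x\in\llbracket\psi(\tau^\ast,\bar\sigma)\rrbracket$. Then $\llbracket\psi(\tau^\ast,\bar\sigma)\rrbracket\setminus E^{L_\alpha}$ is an $\infty$-Borel set with a code in $L_\beta$ and of measure at most $\mu(E^{L_\beta})-\mu(E^{L_\alpha})=0$. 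A real random over $L_\beta$ avoids every $\infty$-Borel null set with a code in $L_\beta$: apply the inner approximation of $\infty$-Borel sets by conditions used in the proof of Lemma~\ref{truth lemma} to the conull complement, take a countable union of the resulting conditions (a $\Sigma_1$-collection argument bounds their ranks), and obtain a conull Borel set with a code in $L_\beta$ disjoint from the given set. Hence $x$ avoids $\llbracket\psi(\tau^\ast,\bar\sigma)\rrbracket\setminus E^{L_\alpha}$; since $x\in\llbracket\psi(\tau^\ast,\bar\sigma)\rrbracket$, this yields $x\in E^{L_\alpha}$, whence $L_\alpha[x]\vDash\exists y\,\psi(y,\bar\sigma^x)$ by Lemma~\ref{application of boolean values} again.

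It remains to prove $\mu(E^{L_\alpha})=\mu(E^{L_\beta})$, which is where $L_\alpha\prec_{\Sigma_1}L_\beta$ enters. For $\gamma<\beta$ let $E_\gamma=\bigcup\{\llbracket\psi(\tau,\bar\sigma)\rrbracket\mid\tau\in L_\gamma\}$, an $\infty$-Borel set obtained from $\gamma$ and $\bar\sigma$ by a $\Delta_1$-recursion, so that $E^{L_\alpha}=\bigcup_{\gamma<\alpha}E_\gamma$ increasingly. A $\Sigma_1$-collection argument in an admissible $L$ (using that $\mu([p])\in L$ for a condition $p\in L$, so that the set of rationals below it lies in $L$) shows that for such $p$ the relation $p\Vdash^{L}\exists y\,\psi(y,\bar\sigma)$ is equivalent to the statement that $\mu([p]\setminus E_\gamma)=0$ for some $\gamma$; together with the $\Delta_1$-definability of the measure of a code and of the forcing relation for $\Delta_0$-formulas, this makes
$$R(q)\ :\equiv\ \exists p\,\bigl(p\text{ a random condition}\ \wedge\ \mu([p])>q\ \wedge\ p\Vdash^{L}\exists y\,\psi(y,\bar\sigma)\bigr)$$
a $\Sigma_1$-property of the rational $q$ with parameters in $L_\alpha$. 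By the inner approximation property, $\{q\in\mathbb{Q}\mid L\vDash R(q)\}=\mathbb{Q}\cap[0,\mu(E^{L}))$ for $L=L_\beta$, and the identical computation in $L_\alpha$ gives $\{q\in\mathbb{Q}\mid L_\alpha\vDash R(q)\}=\mathbb{Q}\cap[0,\mu(E^{L_\alpha}))$. Since these two sets of rationals agree by $\Sigma_1$-elementarity, $\mu(E^{L_\alpha})=\mu(E^{L_\beta})$.

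The main obstacle is this last step: controlling the measure of the class-sized Boolean value $E^{L_\beta}$ through its set-sized shadow $E^{L_\alpha}$, and the bookkeeping needed to see that $R(q)$ is genuinely $\Sigma_1$ — in particular the passage from $\mu([p]\setminus E^{L})=0$ to a single stage $\mu([p]\setminus E_\gamma)=0$, which rests on $\Sigma_1$-collection — together with the auxiliary fact that reals random over $L_\beta$ avoid $\infty$-Borel null sets coded in $L_\beta$. Everything else is a routine combination of the forcing theorem, the truth lemma, and Lemma~\ref{application of boolean values}.
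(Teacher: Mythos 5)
Your proof is correct at the level of rigor the paper itself adopts, but it takes a genuinely different route. The paper localizes: it applies the Lebesgue density theorem to the Boolean value $A=\llbracket \varphi(\sigma,\tau)\rrbracket$ of a chosen witness name $\sigma\in L_\beta$, covers $A$ up to a null set by basic open sets on which $A$ has relative measure close to $1$, reflects the $\Sigma_1$-statement ``there is a condition of large relative measure inside $U_s$ forcing $\exists \nu\,\varphi(\nu,\tau)$'' from $L_\beta$ to $L_\alpha$ to obtain conditions $p^n_s\in L_\alpha$, shows that the union $B$ of these conditions covers $A$ up to a null set coded in $L_\beta$, and then lets the random $x\in A$ land in some $[p^n_s]$. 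You instead compare the global measures of the existential Boolean values $E^{L_\alpha}$ and $E^{L_\beta}$: your rational-cut predicate $R(q)$, made $\Sigma_1$ by using admissibility ($\Sigma_1$-collection) to replace ``$p$ forces the existential'' by ``$\mu([p]\setminus E_\gamma)=0$ for some set stage $\gamma$'' --- the same collection trick as in the proof of Lemma \ref{preservation of admissibility by randoms} --- transfers between $L_\alpha$ and $L_\beta$ by $\Sigma_1$-elementarity, giving $\mu(E^{L_\alpha})=\mu(E^{L_\beta})$, after which one application of randomness to the null set $\llbracket\psi(\tau^\ast,\bar\sigma)\rrbracket\setminus E^{L_\alpha}$ (which has a code in $L_\beta$ since $L_\alpha\in L_\beta$) finishes the argument. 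What each buys: your argument is shorter and avoids the density theorem and the antichain bookkeeping altogether, while the paper's localized version is the template reused almost verbatim for the $\Sigma_n$ case (Lemma \ref{Sigma2 reflection}), where one must control the complexity of statements about relative measures (Lemma \ref{complexity of measure of Boolean values}). Both arguments rest on the same suppressed background facts --- correctness of the internally computed measure and inner approximation of ($\infty$-)Borel-coded Boolean values by conditions of the model, which the paper also uses without proof (in the proof of Lemma \ref{truth lemma} and in asserting that $\psi_n(s)$ holds in $L_\beta$), and the fact that a real random over $L_\beta$ avoids null Boolean values coded in $L_\beta$, for which your countable-union-of-conditions reduction is actually more explicit than anything in the paper. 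Your deferral of the limit-of-admissibles case is harmless: a $\Sigma_1$ fact of $L_\beta[x]$ with parameters in $L_\alpha[x]$ already holds in $L_{\beta'}[x]$ for some admissible $\beta'$ with $\alpha<\beta'<\beta$, and $L_\alpha\prec_{\Sigma_1}L_{\beta'}$ follows from $L_\alpha\prec_{\Sigma_1}L_\beta$ by upward persistence of $\Sigma_1$-formulas, so the admissible case applies and the conclusion passes to $L_\beta[x]$ again by persistence.
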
 
\begin{proof} 
Note that the assumption $L_\alpha\prec_{\Sigma_1} L_\beta$ implies that $L_\alpha$ is admissible. 
To see this, note that for any $\Sigma_1$-definable function $f\colon z\rightarrow L_\alpha$ over $L_\alpha$, the set $L_\alpha$ is a witness for the $\Sigma_1$-collection scheme for $f$ in $L_\beta$. 
It follows from the assumption $L_\alpha\prec_{\Sigma_1} L_\beta$ that 
there is a set in $L_\alpha$ witnessing the $\Sigma_1$-collection scheme for $f$ in $L_\alpha$, and in particuar $f\in L_\alpha$. 

Suppose that $L_\beta[x]\vDash \exists v\ \varphi(v,\tau^x)$, where $\varphi(v,w)$ is a $\Delta_0$-formula and $\tau\in L_\alpha$. 
We choose a witness $y\in L_\beta$ such that $\varphi(y,\tau^x)$ holds in $L_\beta[x]$. 
Moreover, suppose that $\sigma\in L_\beta$ is a name with $\sigma^x=y$. 
Let $A=\llbracket \varphi(\sigma,\tau) \rrbracket$ and 
let $A_n$ denote the set of $s\in {}^{<\omega}2$ with $\frac{\mu(A\cap U_s)}{\mu(U_s)}>1-2^{-n}$, for $n\in\omega$. 

In the next claim, we conclude from the Lebesgue density theorem \ref{Lebesgue density theorem} that $A$ is almost everywhere covered by the sets $U_s$ for $s\in A_n$. 
By an \emph{antichain} in $2^{<\omega}$ we mean a subset of $2^{<\omega}$ whose elements are pairwise incomparable with respect to $\subseteq$. 
Moreover, an \emph{antichain} in a subset $C$ of $2^{<\omega}$ is an antichain $\bar{C}\subseteq C$. 
A \emph{maximal antichain in $C$} is maximal with respect to $\subseteq$ among all antichains in $C$. 

\begin{claim} \label{measure of covering} 
If $A^\star$ is a maximal antichain in $A_n$, then $\mu(A\cap \bigcup_{s\in A^\star} U_s)=\mu(A)$.  
\end{claim} 
\begin{proof} 
Suppose that the claim fails and hence $\mu(A\setminus \bigcup_{s\in A^\star} U_s)>0$. 
Then there is a density point $z$ of $A\setminus \bigcup_{s\in A^\star} U_s$ by the Lebesgue density theorem \ref{Lebesgue density theorem}. 
Hence there is some $k$ with $\frac{\mu(A\cap U_{z\upharpoonright k})}{\mu(U_{z\upharpoonright k})}>1-2^{-n}$ and 
thus $t:= z{\upharpoonright}k\in A_n$, by the definition of $A_n$. 
However, $t$ is incomparable with all elements of $A^\star$, since $z\notin \bigcup_{s\in A^\star} U_s$. 
This contradicts the assumption that $A^\star$ is maximal. 
\end{proof} 

We choose a maximal antichain $A^\star_n$ in $A_n$ for each $n$. Since $A$ has a Borel code in $L_\beta$, we can choose $A^\star_n$ such that the sequence $\langle A^\star_n\mid n\in\omega \rangle$ is an element of $L_\beta$. 

We now aim to reflect the $\Sigma_1$-statement $\exists v\ \varphi(v,\tau)$ from $L_\beta[x]$ to $L_\alpha[x]$. 
Note that we do not have $\sigma$ and $A$ available in $L_\alpha$, but will instead obtain a name in $L_\alpha$ from $\sigma$ by reflection (i.e. by using the assumption that $L_\alpha\prec_{\Sigma_1} L_\beta$). 
The following argument ensures that there is in fact a subset $B$ of $A$ in $L_\beta$ with full measure relative to $A$ which witnesses the reflection, i.e. for randoms  in $B$ over $L_\beta$, the statement reflects. 

Suppose that $s\in A_n$ is given. 
We consider the $\Sigma_1$-formula $\psi_n(s)$ which states that there is a condition $p$ such that $[p]\subseteq U_s$, $\frac{\mu([p]\cap U_s)}{\mu(U_s)}>1-2^{-n}$ and $\exists \nu \ (p\Vdash \varphi(\nu,\tau))$. 
Since $s\in A_n$, $\psi_n(s)$  holds in $L_\beta$, and therefore in $L_\alpha$, by the assumption 
$L_\alpha\prec_{\Sigma_1} L_\beta$. 

Let $p_s^n$ denote the $<_L$-least condition in $L_\alpha$ witnessing $\psi_n(s)$ (in fact any choice would work, as long as the sequence $\langle p_s^n \mid n\in\omega \rangle$ is an element of $L_\beta$). 
Let $B_n=\bigcup_{s\in A^\star_{2n}} [p_s^{2n}]$ 
and $B=\bigcup_{n\in\omega} B_n$.

\begin{claim} 
$\mu(A\setminus B)=0$. 
\end{claim} 
\begin{proof} 
We have $\frac{\mu([p_s^{2n}]\cap U_s)}{\mu(U_s)}>1-2^{-2n}$  for all $s\in A_{2n}$ by the choice of $p_s^{2n}$, and 
$\frac{\mu(A\cap U_s)}{\mu(U_s)}>1-2^{-2n}$ for all $s\in A_{2n}$ by the definition of $A_{2n}$. 
Hence $\frac{\mu(A\cap B_n\cap U_s)}{\mu(U_s)}>1-2^{-n}$ for all $s\in A_{2n}$, by the definition of $B_n$. 
Therefore 
$$\frac{\mu(A\cap B_n\cap U_s)}{\mu(A\cap U_s)}\geq \frac{\mu(A\cap B_n\cap U_s)}{\mu(U_s)}>1-2^{-n}.$$ 
Moreover
$$\mu(\bigcup_{s\in A^\star_n} (A\cap U_s))=\mu(A\cap \bigcup_{s\in A^\star_n} U_s)=\mu(A)$$
by Claim \ref{measure of covering}. 
Hence the sets $U_s$ for $s\in A^\star_{2n}$ partition $A$ up to a null set. 
By applying the previous inequality separately for each $s\in A^\star_{2n}$, we obtain 
$\frac{\mu(A\cap B_n)}{\mu(A)} >1-2^{-n}$. 
Hence $\frac{\mu(A\cap B)}{\mu(A)}=1$ and $\mu(A\setminus B)=0$. 
\end{proof} 

Since $A$ has a Borel code in $L_\beta$ and therefore $\langle A_n\mid n\in\omega \rangle$ 
is an element of $L_\beta$, there is a sequence $\langle b_n\mid n\in\omega \rangle\in L_\beta$ such that $b_n$ is a Borel code for $B_n$. 
Therefore $B=\bigcup_{n\in\omega} B_n$ has a Borel code in $L_\beta$. 

\begin{claim} 
$L_\alpha[x]\vDash \exists v\ \varphi(v,\tau^x)$. 
\end{claim} 
\begin{proof} 
Recall that $\varphi(y,\tau^x)$ holds in $L_\beta[x]$ and $A=\llbracket \varphi(\sigma,\tau) \rrbracket$, therefore $x\in A$ by Lemma \ref{application of boolean values}. 
Since $x$ is random over $L_\beta$ by the assumption, and we have already proved that $\mu(A\triangle B)=0$, we have 
$x\in B$. 
Then there is some $n$ with $x\in B_n$. 
By the definition of $B_n$, there is some $s\in A_{2n}$ with $x\in [p_s^{2n}]\cap A$. 
By the definition of $p_s^{2n}$, there is a name $\nu\in L_\alpha$ such that $p_s^{2n}\Vdash^{L_\alpha} \varphi(\nu,\tau)$. 
Since $x\in [p_s^{2n}]$, Lemma \ref{forcing theorem} implies that $L_\alpha[x]\vDash \varphi(\nu^x,\tau)$. 
\end{proof} 

Hence the statement $\exists v\ \varphi(v,\tau^x)$ reflects to $L_\alpha[x]$. 
\end{proof}

We now move to the preservation of $\Sigma_n$-reflection 
under an appropriate hypothesis. 
The next result shows that the statement $L_\alpha\prec_{\Sigma_n} L_\beta$ is preserved for sufficiently random reals $x$, i.e. $L_\alpha[x]\prec_{\Sigma_n} L_\beta[x]$ holds in the generic extension. 
We first need the following lemma. 

\begin{lemma} \label{complexity of measure of Boolean values} 
Suppose that $\alpha$ is admissible or a limit of admissible ordinals, $t\in {}^{<\omega}2$, $\sigma\in L_\alpha$, $\epsilon\in \mathbb{Q}$, 
$n\geq 1$ and $\varphi$ is a formula. The formulas in the following claims have the parameters $t$, $\sigma$ and $\epsilon$. Let $m_{\sigma,t}=\mu(\llbracket \varphi(\sigma)\rrbracket\cap U_t)$. 
\begin{enumerate-(1)} 
\item 
If $\varphi$ is $\Sigma_n$, then 
\begin{enumerate-(a)} 
\item 
$m_{\sigma,t}>\epsilon$ is equivalent to a $\Sigma_n$-formula. 
\item 
$m_{\sigma,t}\leq\epsilon$ is equivalent to a $\Pi_n$-formula. 
\end{enumerate-(a)} 
\item 
If $\varphi$ is $\Pi_n$, then 
\begin{enumerate-(a)} 
\item 
$m_{\sigma,t}<\epsilon$ is equivalent to a $\Pi_n$-formula. 
\item 
$m_{\sigma,t}\geq\epsilon$ is equivalent to a $\Sigma_n$-formula. 
\end{enumerate-(a)} 
\end{enumerate-(1)} 
\end{lemma}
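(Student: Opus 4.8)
The plan is to prove the four equivalences of Lemma~\ref{complexity of measure of Boolean values} by a single simultaneous induction on $n\ge 1$, using at each step the fact that the Boolean value $\llbracket\varphi(\sigma)\rrbracket$ is an $\infty$-Borel code built definably in $L_\alpha$ from the Boolean values of the instances of the quantifier-free (or lower-complexity) matrix, together with the definability of the measure function on $\infty$-Borel codes established in the lemma after the definition of the quasi-forcing relation. The key observation making everything run is the monotone approximation of measures from inside: for a union $A=\bigcup_{i\in I}A_i$ of sets with codes appearing uniformly in $L_\alpha$, one has $\mu(A\cap U_t)>\epsilon$ if and only if there is a \emph{finite} subset $F\subseteq I$ with $\mu(\bigcup_{i\in F}A_i\cap U_t)>\epsilon$, and the latter inner measure of a finite union of $\infty$-Borel sets is $\Delta_1$-computable from the codes; dually, $\mu(A\cap U_t)\le\epsilon$ is the negation. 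The same remark handles intersections from the outside via complementation.

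First I would treat the base case $n=1$. If $\varphi=\exists y\,\psi(y,\sigma)$ with $\psi$ a $\Delta_0$-formula, then $\llbracket\varphi(\sigma)\rrbracket=\bigvee_{(\nu,p)\in\sigma}\llbracket\psi(\nu,\sigma)\rrbracket\wedge p$ together with the level-$(e)$ clause $\bigcup_{\tau\in L_\alpha}\llbracket\psi(\tau,\sigma)\rrbracket$; in either reading $\llbracket\varphi(\sigma)\rrbracket\cap U_t$ is a union, over a $\Sigma_1$-definable index set, of $\infty$-Borel sets whose codes are uniformly $\Delta_1$ and whose finite subunions have $\Delta_1$-definable inner measure. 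Hence $m_{\sigma,t}>\epsilon$ unwinds as ``there exists a finite tuple of names and a rational $\epsilon'>\epsilon$ such that the $\Delta_0$-forcing relation holds and the $\Delta_1$-computed measure of the corresponding finite union exceeds $\epsilon'$'', which is $\Sigma_1$; its negation $m_{\sigma,t}\le\epsilon$ is then $\Pi_1$. This gives clause (1) for $n=1$, and clause (2) for $n=1$ follows by applying clause (1) to $\neg\varphi$ and using $\llbracket\neg\varphi\rrbracket=\neg\llbracket\varphi\rrbracket$, so $\mu(\llbracket\neg\varphi(\sigma)\rrbracket\cap U_t)=\mu(U_t)-m_{\sigma,t}$ and the rational shift of $\epsilon$ converts the inequalities accordingly.

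For the inductive step, suppose the claim holds below $n$ and let $\varphi=\exists y\,\psi(y,\sigma)$ with $\psi$ being $\Pi_{n-1}$ (so $\varphi$ is $\Sigma_n$). Again $\llbracket\varphi(\sigma)\rrbracket\cap U_t=\bigcup_{\tau\in L_\alpha}(\llbracket\psi(\tau,\sigma)\rrbracket\cap U_t)$. By the inner-approximation remark, $m_{\sigma,t}>\epsilon$ holds iff there are finitely many names $\tau_0,\dots,\tau_k$ and a rational $\epsilon'>\epsilon$ with $\mu\big(\bigcup_{j\le k}\llbracket\psi(\tau_j,\sigma)\rrbracket\cap U_t\big)>\epsilon'$. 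The measure of this finite union is an integer-combination (inclusion–exclusion) of measures of finite intersections $\bigcap_{j\in S}\llbracket\psi(\tau_j,\sigma)\rrbracket$; each such intersection is the Boolean value of a single $\Pi_{n-1}$-formula (a finite conjunction of $\Pi_{n-1}$-formulas is again $\Pi_{n-1}$, renaming the matrix), so by the induction hypothesis clause (2) applied to these conjunctions, each ``$\mu(\cdot\cap U_t)<q$'' is $\Pi_{n-1}$ and ``$\mu(\cdot\cap U_t)\ge q$'' is $\Sigma_{n-1}$. A Boolean combination of such conditions that pins down the inclusion–exclusion value to lie above $\epsilon'$ is (after putting into prenex form and absorbing the outer existential block over the $\tau_j$) a $\Sigma_n$-formula; so $m_{\sigma,t}>\epsilon$ is $\Sigma_n$ and its negation is $\Pi_n$, giving clause (1) at level $n$. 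Clause (2) at level $n$ follows by applying clause (1) at level $n$ to $\neg\varphi$ exactly as in the base case, using the complementation identity for Boolean values and shifting $\epsilon$ by rationals.

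The main obstacle I anticipate is bookkeeping the complexity of the inclusion–exclusion expression for the measure of a finite union so that the alternation count genuinely stays at $n$ and does not drift to $n+1$: one must be careful that the \emph{finite} conjunction/intersection steps do not add a quantifier block (they do not, since finite conjunctions of $\Pi_{n-1}$ stay $\Pi_{n-1}$), and that the single outer existential block over the witnessing tuple of names and the rational $\epsilon'$ merges with, rather than stacks on top of, the leading $\exists$ coming from the $\Sigma_{n-1}$-measure comparisons. The second delicate point is justifying the inner-approximation principle itself in the weak metatheory: for a union over a $\Sigma_1$-definable family one uses admissibility (or the limit-of-admissibles hypothesis) to reflect ``$\mu(\bigcup_{i<\xi}A_i)>\epsilon$'' down to a bounded stage, which is precisely where $\Sigma_1$-collection in $L_\alpha$ enters, much as in Lemma~\ref{preservation of admissibility by randoms}; once this is in place the argument is routine.
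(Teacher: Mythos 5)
Your route diverges from the paper's at the crucial step. The paper does not use inclusion--exclusion at all: after reducing to a finite union $\bigcup_{i\le k}\llbracket\varphi(\sigma_i,\tau)\rrbracket\cap U_t$, it invokes the Lebesgue density theorem to rewrite ``the union has measure $>\epsilon$'' as ``there are finitely many pairwise incompatible extensions $t_0,\dots,t_l$ of $t$ and rationals $\epsilon_0,\dots,\epsilon_l$ summing to $\epsilon$ such that on each $U_{t_j}$ a \emph{single} disjunct already has measure $>\epsilon_j$''. This needs only one-sided (lower) estimates on measures of single Boolean values, so all conjuncts have the same $\Sigma$-type, and the remaining uniformity over the unboundedly many indices is absorbed by a universal $\Sigma_n$-formula. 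Your inclusion--exclusion computation instead forces two-sided estimates (lower bounds on the odd-size intersections, upper bounds on the even-size ones), i.e.\ a mixed $\Sigma$/$\Pi$ matrix, and that is precisely where your sketch has genuine gaps.

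Concretely: (i) your derivation of clause (2) by complementation proves the \emph{opposite} classification from the one you later use as induction hypothesis. From (1a) for $\Sigma_n$-formulas and $\mu(\llbracket\neg\varphi(\sigma)\rrbracket\cap U_t)=\mu(U_t)-m_{\sigma,t}$ one obtains, for $\Pi_n$-formulas, that $m<\epsilon$ is $\Sigma_n$ and $m\ge\epsilon$ is $\Pi_n$ --- the swap of (2a)/(2b). Your inductive step then cites (2) at level $n-1$ in the unswapped orientation (``$<q$ is $\Pi_{n-1}$, $\ge q$ is $\Sigma_{n-1}$''), which you have not established; the induction can be repaired by consistently working with the form the negation actually yields (which also suffices for the application in Lemma \ref{Sigma2 reflection}), but as written the hypothesis and the conclusion of your induction do not match. (ii) More seriously, the inclusion--exclusion expression has $2^{k+1}-1$ terms where $k$ is existentially quantified, so it is not a fixed Boolean combination that can simply be ``put into prenex form''; you must quantify over families of rationals and witnesses indexed by the subsets of $\{0,\dots,k\}$ and then verify that the resulting bounded universal quantifiers do not raise the Levy complexity. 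This is exactly the delicate point here: the lemma must be applied over $L_\Sigma$, which is not admissible, so no collection is available to absorb bounded quantifiers, and the paper deliberately works with the strict Levy hierarchy in which bounded quantifiers are not free. The count can be salvaged (extract the existential witnesses of the $\Sigma$-side conjuncts into one finite sequence so that the bounded $\forall$ over subsets only stands in front of $\Pi$-material, or use a universal formula as the paper does), but your ``bookkeeping'' paragraph identifies non-issues (fixed finite conjunctions, merging one existential block) while leaving this actual obstruction unaddressed. Finally, your worry that the inner approximation of the union needs $\Sigma_1$-collection is unnecessary: passing to finite subunions is just countable additivity in $V$, the same first step the paper takes.
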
 
\begin{proof} 
For $\Delta_0$-formulas $\varphi$, the claim holds since the function mapping $\sigma$ to $\llbracket \varphi(\sigma)\rrbracket$ is $\Delta_1$-definable in $\sigma$. 

Suppose that $\varphi(x,y)$ is a $\Pi_n$-formula. We aim to prove the first claim for the formula $\exists x \varphi(x,y)$. 

We have $\mu(\llbracket \exists x \varphi(x,y)\rrbracket\cap U_t)>\epsilon$ if and only if there is some $k$ and some $\sigma_0,\dots,\sigma_k$ such that $\mu(\llbracket \bigvee_{i\leq k} \varphi(\sigma_i,\tau)\rrbracket\cap U_t)>\epsilon$. By the Lebesgue density theorem \ref{Lebesgue density theorem}, the last inequality is equivalent to the statement that there is some $l$, a sequence $t_0,\dots, t_l$ of pairwise incompatible extensions of $t$ and some $\epsilon_0,\dots, \epsilon_l\in\mathbb{Q}$ such that $\epsilon=\sum_{i\leq k} \epsilon_i$ and for all $j\leq l$, there is some $i\leq k$ such that $\mu(\llbracket \varphi(\sigma_i,y)\rrbracket\cap U_{t_i})>\epsilon_i$. Using a universal $\Sigma_n$-formula, we obtain an equivalent $\Sigma_n$-statement. 

We have $\mu(\llbracket \exists x \varphi(x,y)\rrbracket\cap U_t)\leq \epsilon$ if and only if for all $\sigma_0,\dots, \sigma_k$, $\mu(\llbracket \bigvee_{i\leq k}\varphi(\sigma_i,\tau)\rrbracket)\leq\epsilon$. This is a $\Pi_n$-statement by argument in the previous case. 

The second claim follows by switching to negations. 
\end{proof} 

\begin{lemma} \label{Sigma2 reflection} 
Suppose that $\alpha<\beta$, $\beta$ is admissible or a limit of admissibles, $n\geq1$ and $L_\alpha\prec_{\Sigma_n} L_\beta$. 
Suppose that $\beta$ is countable in $L_\gamma$ and that $x$ is random over $L_{\gamma}$. 
Then $L_\alpha[x]\prec_{\Sigma_n} L_\beta[x]$. 
\end{lemma}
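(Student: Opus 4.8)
The plan is to induct on $n$, using Lemma~\ref{Sigma1 reflection} as the base case $n=1$ (noting that its hypothesis ``$x$ is random over $L_\beta$'' is implied by ``$x$ is random over $L_\gamma$'' when $\beta$ is countable in $L_\gamma$, since $L_\beta\subseteq L_\gamma$). For the inductive step, suppose the statement holds for $n$ and that $L_\alpha\prec_{\Sigma_{n+1}} L_\beta$, so in particular $L_\alpha\prec_{\Sigma_n} L_\beta$ and hence $L_\alpha[x]\prec_{\Sigma_n} L_\beta[x]$ by the inductive hypothesis. It remains to reflect $\Sigma_{n+1}$-statements with parameters from $L_\alpha[x]$.

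So suppose $L_\beta[x]\vDash \exists v\,\varphi(v,\tau^x)$ where $\varphi$ is $\Pi_n$ and $\tau\in L_\alpha$. Pick a witness $y\in L_\beta[x]$ and a name $\sigma\in L_\beta$ with $\sigma^x=y$; let $A=\llbracket\varphi(\sigma,\tau)\rrbracket$, so $x\in A$ by Lemma~\ref{application of boolean values} (the ``moreover'' clause applies, since $\beta$ is countable in $L_\gamma$ and $x$ is random over $L_\gamma$). The strategy mirrors the proof of Lemma~\ref{Sigma1 reflection}: cover $A$ up to measure zero by basic open sets $U_s$ on which $A$ has high relative density (using Theorem~\ref{Lebesgue density theorem}), and on each such $U_s$ use the reflection hypothesis $L_\alpha\prec_{\Sigma_{n+1}} L_\beta$ to obtain, inside $L_\alpha$, a condition $p$ with $[p]\subseteq U_s$, high density $\tfrac{\mu([p]\cap U_s)}{\mu(U_s)}$, and a name $\nu$ with $p\Vdash\varphi(\nu,\tau)$. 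The point where the argument must be upgraded from the $\Sigma_1$ case is that the statement ``there exist a condition $p$ and a name $\nu$ with $[p]\subseteq U_s$, $\tfrac{\mu([p]\cap U_s)}{\mu(U_s)}>1-2^{-n}$ and $p\Vdash\varphi(\nu,\tau)$'' must itself be $\Sigma_{n+1}$ over $L_\beta$ so that it reflects to $L_\alpha$. This is exactly what Lemma~\ref{complexity of measure of Boolean values} delivers: $p\Vdash\varphi(\nu,\tau)$ means $\mu([p]\setminus\llbracket\varphi(\nu,\tau)\rrbracket)=0$, equivalently $\mu(\llbracket\neg\varphi(\nu,\tau)\rrbracket\cap[p])=0$, and since $\neg\varphi$ is $\Sigma_n$ this is $\Pi_n$, so prefixing with $\exists p\,\exists\nu$ and the rational density requirement keeps it $\Sigma_{n+1}$.

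From here the bookkeeping is as in Lemma~\ref{Sigma1 reflection}: choose maximal antichains $A^\star_m$ in the sets $A_m=\{s:\tfrac{\mu(A\cap U_s)}{\mu(U_s)}>1-2^{-m}\}$ with the sequence $\langle A^\star_m\mid m\in\omega\rangle\in L_\beta$ (possible since $A$ has a Borel code in $L_\beta$), pick for each $s\in A_{2m}$ the $<_L$-least witnessing pair $(p^{2m}_s,\nu^{2m}_s)\in L_\alpha$ with the sequence of these lying in $L_\beta$, set $B_m=\bigcup_{s\in A^\star_{2m}}[p^{2m}_s]$ and $B=\bigcup_m B_m$, and run the same density estimate to get $\mu(A\setminus B)=0$, hence a Borel code for $B$ in $L_\beta$. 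Since $x$ is random over $L_\gamma$ and $B$ has a code in $L_\beta\subseteq L_\gamma$ with $\mu(A\triangle B)=0$, we get $x\in B$, so $x\in[p^{2m}_s]$ for some $m$ and some $s\in A_{2m}$, and then $p^{2m}_s\Vdash\varphi(\nu^{2m}_s,\tau)$ together with the forcing theorem (Lemma~\ref{forcing theorem}, or its ``moreover'' version for arbitrary formulas under the countability hypothesis) gives $L_\alpha[x]\vDash\varphi((\nu^{2m}_s)^x,\tau)$, so $L_\alpha[x]\vDash\exists v\,\varphi(v,\tau^x)$, completing the reflection. The main obstacle is making precise that the witnessing statement about $(p,\nu)$ has complexity exactly $\Sigma_{n+1}$ — this is where Lemma~\ref{complexity of measure of Boolean values} is essential and where one must be careful that the forcing relation $p\Vdash\varphi(\nu,\tau)$ for a $\Pi_n$-formula $\varphi$, expressed via a measure-zero condition on a Boolean value, lands on the correct side of the Levy hierarchy; the density-theorem covering argument and the $L_\beta$-definability of all the auxiliary sequences are then routine adaptations of Lemma~\ref{Sigma1 reflection}.
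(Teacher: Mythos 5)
Your overall architecture (density covering of $A=\llbracket\varphi(\sigma_0,\tau)\rrbracket$, reflecting a witnessing statement to $L_\alpha$, choosing $<_L$-least witnesses, building $B$ with $\mu(A\setminus B)=0$ and a code in $L_\gamma$, and using randomness over $L_\gamma$ to land in $B$) matches the paper, but the inductive step contains a genuine gap at exactly the point you flag as "the main obstacle". You reflect the statement ``there exist a condition $p$ and a name $\nu$ with $[p]\subseteq U_s$, high relative measure, and $p\Vdash\varphi(\nu,\tau)$'', and you claim this is $\Sigma_{n+1}$ because $p\Vdash\varphi(\nu,\tau)$, i.e.\ $\mu(\llbracket\neg\varphi(\nu,\tau)\rrbracket\cap[p])=0$, is $\Pi_n$ ``by Lemma~\ref{complexity of measure of Boolean values}''. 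But that lemma only bounds the complexity of $\mu(\llbracket\psi(\sigma)\rrbracket\cap U_t)$ for \emph{basic open} sets $U_t$, $t\in{}^{<\omega}2$; it says nothing about $\mu(\llbracket\psi(\sigma)\rrbracket\cap[p])$ for an arbitrary perfect-tree condition $p$. If you try to reduce to the lemma by approximating the closed set $[p]$ from outside by the open sets $O_k=\bigcup\{U_t: t\in p,\ |t|=k\}$, you get $\mu(X\cap[p])\le\epsilon$ iff for all rational $\delta>0$ there is $k$ with $\mu(X\cap O_k)\le\epsilon+\delta$, and the extra $\forall\delta\,\exists k$ (plus the finite-sum manipulations) pushes the naive count past $\Pi_n$, so the witnessing statement is no longer seen to be $\Sigma_{n+1}$ and the hypothesis $L_\alpha\prec_{\Sigma_{n+1}}L_\beta$ no longer yields reflection. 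One could probably repair this by re-proving an analogue of Lemma~\ref{complexity of measure of Boolean values} with $[p]\cap U_t$ in place of $U_t$ (rerunning its induction and the density/splitting argument relative to $[p]$), but that is additional work you neither do nor cite; in the $\Sigma_1$ case the paper gets away with conditions only because the forcing relation for $\Delta_0$-formulas is $\Delta_1$-definable.

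The paper's proof is engineered precisely to avoid this: it never quantifies over conditions in the reflected statement. Instead it reflects $\psi_n(s)$ saying ``there is a \emph{name} $\sigma$ with $\frac{\mu(\llbracket\varphi(\sigma,\tau)\rrbracket\cap U_s)}{\mu(U_s)}>1-2^{-n}$'', which Lemma~\ref{complexity of measure of Boolean values} does cover (only basic opens $U_s$ appear), and then uses the Boolean values $B_{\sigma_s^{2n}}=\llbracket\varphi(\sigma_s^{2n},\tau)\rrbracket$ themselves, rather than conditions $[p_s^{2n}]$, as the pieces of the covering set $B$; membership $x\in B_{\sigma_s^{2n}}$ then gives $L_\alpha[x]\vDash\varphi((\sigma_s^{2n})^x,\tau^x)$ directly via Lemma~\ref{application of boolean values}. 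If you replace your condition-name pairs by this name-only formulation (and keep your induction on $n$, which is a reasonable way to organize the $\prec_{\Sigma_n}$ bookkeeping), the rest of your argument goes through as written.
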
 
\begin{proof} 
Note that the assumption $L_\alpha\prec_{\Sigma_1} L_\beta$ implies that $L_\alpha$ is admissible, 
as in the proof of Lemma \ref{Sigma1 reflection}. 

Suppose that the statement $\exists u\ \varphi(u,\tau^x)$ holds in $L_\beta[x]$, where $n=m+1$, $\varphi$ is $\Pi_m$ and $\tau\in L_\alpha$. 
Suppose that $\sigma_0$ is a name in $L_\beta$ with $L_\beta[x]\vDash \varphi(\sigma_0^x, \tau^x)$. 

Let $A=\llbracket \varphi(\sigma_0,\tau) \rrbracket$. 
Since $\beta$ is countable in $L_\gamma$, $A$ has a Borel code in $L_\gamma$. 
It follows from Lemma \ref{forcing theorem} that $x\in A$ and $\mu(A)>0$. 
Let $A_n$ denote the set of $s\in {}^ {<\omega}2$ such that 
$$\frac{\mu(A \cap U_s)}{\mu(U_{s})}> 1- 2^{-n}.$$ 

\begin{claim} \label{measure of covering for sigma 2} 
Suppose that $A^\star$ is a maximal antichain in $A_n$. 
Then 
$\mu(A\cap \bigcup_{s\in A^\star} U_s)=\mu(A)$. 
\end{claim} 
\begin{proof} 
The proof is identical to the proof of Claim \ref{measure of covering} 
via the Lebesgue density theorem \ref{Lebesgue density theorem}. 
\end{proof} 

We choose a maximal antichain $A^\star_n$ in $A_n$ for each $n$. Since $A$ has a Borel code in $L_{\beta+1}\subseteq L_\gamma$, it is possible to choose $A^\star_n$ such that the sequence $\langle A^\star_n\mid n\in\omega \rangle$ is an element of $L_\gamma$. 

Let $B_\sigma=\llbracket \varphi(\sigma,\tau) \rrbracket$. Then $A=B_{\sigma_0}$. 
We consider the statement $\psi_n(s)$ stating that there is some name $\sigma$ such that $\frac{\mu(B_\sigma\cap U_s)}{\mu(U_s)}>1-2^{-n}$. This is a $\Sigma_n$-statement by Lemma \ref{complexity of measure of Boolean values}. 

Since $s\in A_n$, $\psi_n(s)$ holds in $L_\beta$. Since $L_\alpha\prec_{\Sigma_n} L_\beta$, this implies that $\psi_n(s)$ holds in $L_\alpha$. 
Let $\sigma_s^n$ denote the $<_L$-least name in $L_\alpha$ witnessing $\psi_n(s)$, for $s\in A_n$ (in fact any choice would work, as long as the sequence $\langle \sigma_s^n \mid n\in\omega \rangle$ is an element of $L_\beta$). 

Let $B_n=\bigcup_{s\in A^\star_{2n}} B_{\sigma^{2n}_s}$ and $B=\bigcup_n B_n$. 
Since $\beta$ is countable in $L_\gamma$, $\langle \sigma_s^n\mid n\in\omega\rangle$ is an element of $L_\beta$ for each $s\in 2^{<\omega}$ and the sets $B_\sigma$ have Borel codes in $L_\gamma$ for all names $\sigma\in L_\beta$, uniformly in $\sigma$, the set $B$ has a Borel code in $L_\gamma$. 

\begin{claim} \label{A minus B is null} 
$\mu(A \setminus B)=0$. 
\end{claim} 
\begin{proof} 
We have $\frac{\mu(A\cap U_s)}{\mu(U_s)}>1-2^{-2n}$ for all $s\in A_{2n}^\star$ by the definition of $A_{2n}$ and $\frac{\mu(B_s\cap U_s)}{\mu(U_s)}> 1-2^{-2n}$ for all $s\in A_{2n}$ by the choice of $\sigma^{2n}_s$. Hence 
$$\frac{\mu(A\cap B_s)}{\mu(A\cap U_s)}\geq \frac{\mu(A\cap B_s)}{\mu(U_s)}>1-2^{-n}$$ 
for all $s\in A_{2n}$. 
Moreover, 
$$\mu(\bigcup_{s\in A^\star_n} (A\cap U_s))=\mu(A\cap \bigcup_{s\in A^\star_n} U_s)=\mu(A)$$
by Claim \ref{measure of covering for sigma 2}.
Since $A_n^\star\subseteq A_n$ is an antichain, the sets $A \cap U_s$ for $s\in A_n^\star$ are pairwise disjoint.
Therefore the previous inequality implies that 
$$\frac{\mu(A\cap B_n)}{\mu(A)}>1-2^{-n}.$$ 
Since $B=\bigcup_n B_n$, this implies $\frac{\mu(A\cap B)}{\mu(A)}=1$ and hence $\mu(A\setminus B)=0$. 
\end{proof} 

\begin{claim} \label{Sigma2 statement reflects} 
$\varphi((\sigma_s^{2n})^x,\tau^x)$ holds in $L_\alpha[x]$. 
\end{claim} 
\begin{proof} 
We have $x\in A$ by the assumption. Since $A$ and $B$ have Borel codes in $L_\gamma$, $\mu(A\setminus B)=0$ and $x$ is random over $L_\gamma$, $x\in B$. Then $x\in B_n$ for some $n$ and $x\in B_{\sigma^{2n}_s}=\llbracket \varphi(\sigma^{2n}_s,\tau)\rrbracket$ for some $s\in A^\star_{2n}$. 
By Lemma \ref{forcing theorem}, $\varphi((\sigma^{2n}_s)^x,\tau)$ holds in $L_\alpha[x]$. 
\end{proof} 

Hence the statement $\exists u\ \varphi(u,\tau^x)$ reflects to $L_\alpha[x]$. 
\end{proof} 

The assumptions in Lemma \ref{Sigma2 reflection} for $n=2$ are not optimal for the application to \ITTM s below. 
We will see in Section \ref{subsection ITTM-random reals} that ITTM-randomness is a sufficient assumption for the applications. 

\subsection{Writable reals from non-null sets} \label{subsection writable reals relative to many oracles} 

We will prove an analogue to the following theorem for infinite time Turing machines. Let $\leq_{\text{T}}$ denote Turing reducibility. 

\begin{thm} (Sacks, see \cite[Corollary 11.7.2]{MR2732288}) \label{writable reals from non-null sets} 
If a real $x$ is computable if and only if $\{y \mid x \leq_{\text{T}}y\}$ has positive Lebesgue measure. 
\end{thm}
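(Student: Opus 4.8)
One direction is immediate: if $x$ is computable, then $x\leq_{\text{T}}y$ for every real $y$, so $\{y\mid x\leq_{\text{T}}y\}={}^\omega 2$ has measure $1>0$. The content is the converse, and the plan is to carry out Sacks' measure-theoretic argument.

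First I would write $\{y\mid x\leq_{\text{T}}y\}=\bigcup_{e\in\omega}B_e$, where $B_e=\{y\mid \Phi_e^y=x\}$, and use countable additivity of $\mu$ to fix an index $e$ with $\mu(B_e)=\delta>0$; then I fix a rational $q$ with $0<q<\delta$. For $\tau\in{}^{<\omega}2$ set $U^e_\tau=\{y\mid \forall k<|\tau|\ \Phi_e^y(k)\downarrow=\tau(k)\}$. Each $U^e_\tau$ is a c.e.\ open set, uniformly in $e$ and $\tau$, being a finite intersection of the c.e.\ open conditions ``$\Phi_e^y(k)$ halts with value $\tau(k)$''; hence $\mu(U^e_\tau)$ is a uniformly left-c.e.\ real, i.e.\ one can compute a nondecreasing sequence of rationals with supremum $\mu(U^e_\tau)$.

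The key step, and the only one that is not bookkeeping, is a disjointness observation that pins down $x$. For $n\in\omega$ let $V_n=U^e_{(x\upharpoonright n)^\frown(1-x(n))}$. If $y\in V_n\cap V_m$ with $n<m$, then $y\in V_m$ forces $\Phi_e^y(n)=x(n)$ (since $n<m$) while $y\in V_n$ forces $\Phi_e^y(n)=1-x(n)$, a contradiction; so the $V_n$ are pairwise disjoint, whence $\sum_n\mu(V_n)\le 1$ and $\mu(V_n)\to 0$. I then fix $n_0$ with $\mu(V_n)<q$ for all $n\ge n_0$. On the other hand $B_e\subseteq U^e_{x\upharpoonright m}$ for every $m$, so $\mu(U^e_{(x\upharpoonright n)^\frown x(n)})=\mu(U^e_{x\upharpoonright(n+1)})\ge\delta>q$ for every $n$. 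Thus for $n\ge n_0$ the threshold $q$ separates the two immediate successor cylinders of $x\upharpoonright n$: the one through $x(n)$ has measure $>q$, the one through $1-x(n)$ has measure $<q$.

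To finish I would exhibit the program. Hard-wire $e$, $q$, $n_0$ and the finite string $x\upharpoonright n_0$ into a Turing program; it prints $x\upharpoonright n_0$ outright, and, having already produced $x\upharpoonright n$ for some $n\ge n_0$, it runs in parallel the uniform left-approximations to $\mu(U^e_{(x\upharpoonright n)^\frown 0})$ and $\mu(U^e_{(x\upharpoonright n)^\frown 1})$ and waits until one of them exceeds $q$. By the previous paragraph this happens for $i=x(n)$ and never for $i=1-x(n)$, so the program may safely output that $i$ as $x(n)$ and continue. This computation halts at each coordinate and produces $x$, so $x$ is computable; the non-uniform parameters cause no problem, since computability only asks for the existence of a program. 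The hard part is really just isolating the right invariant --- the family $(V_n)$ --- so that the off-branch measures vanish and a single rational threshold decides $x(n)$ uniformly from $n_0$ on; this is also precisely where the infinite-time analogue proved below needs genuinely new work, since over the relevant admissible sets the statement ``the measure of a semidecidable open set is left-c.e.'' is no longer available, which is what the random-forcing machinery developed above is meant to replace.
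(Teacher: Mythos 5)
Your argument is correct, and it is the standard proof of Sacks' theorem: the decomposition of the cone into the sets $B_e$, the pairwise disjointness of the off-branch sets $V_n$ (which forces $\mu(V_n)\to 0$), and the use of a single rational threshold $q$ together with uniformly left-c.e.\ approximations to $\mu(U^e_{(x\upharpoonright n)^\frown i})$ to decide $x(n)$ for $n\ge n_0$ all check out, and the non-uniformity in $e,q,n_0,x\upharpoonright n_0$ is harmless. Be aware, though, that the paper does not prove this statement at all: it is quoted as a classical result (with a reference to Downey--Hirschfeldt) and serves only as the template for the ITTM analogue, Theorem \ref{ITTMSacks}, whose proof takes a genuinely different route. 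There, one picks two side-by-side random reals $r,s$ inside the positive-measure set, shows $\lambda^r=\lambda^s=\lambda$ by preservation of $\Sigma_1$- and $\Sigma_2$-reflection under random quasi-forcing, and concludes $x\in L_\lambda[r]\cap L_\lambda[s]=L_\lambda$ from the mutual-randoms intersection lemma (Lemma \ref{intersection from mutually randoms}). Your approach buys a fully elementary, effective argument, available precisely because Turing functionals have finite use, so the $U^e_\tau$ are c.e.\ open with uniformly left-c.e.\ measure; the paper's approach buys generality for reducibilities whose lower cone relative to $y$ is a level $L_\alpha[y]$ of the constructible hierarchy. Two small caveats on your closing remark: the obstruction to lifting your argument is less the failure of left-enumerability of measures (the paper observes that measures of Borel sets are ITTM-computable by a $\Delta_1$-recursion) than the lack of any finite-use/open-set structure for the witnessing sets; and conversely, the paper's intersection strategy has no counterpart at the Turing level, since mutually ML-random reals can share non-computable information (the two halves of $\Omega$), which is exactly why the classical proof must go through measure thresholds rather than mutual genericity.
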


In \cite{Infinite-computations}, analogues of this theorem for other machines were considered. 
It was asked if this holds for infinite time Turing machines, and this was only proved for non-meager Borel sets, via Cohen forcing over levels of the constructible hierarchy. 
With the results in Section \ref{section random forcing}, we prove this for Lebesgue measure. 

\begin{thm} \label{ITTMSacks} 
\begin{enumerate-(1)}
\item 
A real $x$ is writable if and only if $\mu(\{y:x\leq_{w}y\})>0$
\item 
A real $x$ is eventually writable if and only if $\mu(\{y:x\leq_{ew}y\})>0$
\item 
A real $x$ is accidentally writable if and only if $\mu(\{y:x\leq_{aw}y\})>0$
\end{enumerate-(1)}
\end{thm}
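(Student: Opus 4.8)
The plan is to prove each of the three equivalences by the same strategy, exploiting the characterizations from Theorem \ref{lambdazetasigma} together with the reflection results of Section \ref{subsection reflection}. The nontrivial direction in each case is: if the relevant reducibility set has positive measure, then $x$ is writable (resp.\ eventually, accidentally writable); the converse is immediate since if $x$ is writable from the empty oracle then $x\leq_w y$ for every $y$, so the set is all of ${}^\omega 2$ and has measure $1$. So fix $A=\{y: x\leq_w y\}$ (or the $\leq_{ew}$, $\leq_{aw}$ analogue) and assume $\mu(A)>0$. By Lemma \ref{characterization of ITTM-semidecidable} and its eventual/accidental variants (the relation $x\leq_w y$ is ITTM-semidecidable in $x,y$, and similarly for the others), $A$ is a reasonably definable set; in particular it is Lebesgue measurable. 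Passing to a Lebesgue density point via Theorem \ref{Lebesgue density theorem}, I may shrink $A$ to a positive-measure set concentrated in a basic clopen set, and in fact it suffices to work with a single random condition $p$ (a positive-measure perfect tree) such that $[p]\subseteq A$ up to a null set, and with a level $L_\beta$ high enough that $p$, and a name witnessing the computation, live in $L_\beta$ and $x$ is in $L_\beta$.

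The key step is then the following. Take $y\in[p]$ random over $L_{\gamma}$ for a suitably large $\gamma$ (large enough that the reflection lemmas apply and that $\beta$ is countable in $L_\gamma$); the set of such $y$ has full measure, so it meets $A$, and we may assume our chosen $y$ lies in $A$. Since $y$ is random over (a sufficiently tall) $L_\gamma$, Lemma \ref{preservation of admissibility by randoms}, Lemma \ref{Sigma1 reflection} and Lemma \ref{Sigma2 reflection} give that the generic extension $L_\alpha[y]$ preserves the relevant reflection configuration; concretely, by Theorem \ref{lambdazetasigma} the triple $(\lambda,\zeta,\Sigma)$ is the lexically least triple with $L_\lambda\prec_{\Sigma_1}L_\zeta\prec_{\Sigma_2}L_\Sigma$, and preservation of $\Sigma_1$- and $\Sigma_2$-elementarity to $L_\lambda[y]\prec_{\Sigma_1}L_\zeta[y]\prec_{\Sigma_2}L_\Sigma[y]$ together with minimality forces $\lambda^y=\lambda$, $\zeta^y=\zeta$, $\Sigma^y=\Sigma$ for such $y$. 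Now use the characterizations in Theorem \ref{lambdazetasigma}: $L_{\zeta^y}[y]$ is exactly the set of eventually writable reals in $y$, $L_{\zeta^{y}}[y]$ the writable ones (at the appropriate level), and $L_{\Sigma^y}[y]$ the accidentally writable ones. Since $x\leq_w y$, we have $x\in L_{\lambda^y}[y]=L_{\lambda}[y]$ (writable case), and similarly $x\in L_{\zeta}[y]$ or $x\in L_{\Sigma}[y]$ in the other two cases.

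Finally, I need to descend from ``$x\in L_\lambda[y]$ for a random $y$ over a tall $L_\gamma$'' to ``$x$ is writable''. By Lemma \ref{generic extension is subset of L stage} and Lemma \ref{L stage is subset of generic extension}, $L_\alpha[y]=L_\alpha^y$ for $y$ random over $L_\alpha$, so $x\in L_\lambda[y]$ means $x$ has a name $\dot x\in L_\lambda$ with $\dot x^y=x$. Apply the truth lemma (Lemma \ref{truth lemma}) / forcing theorem (Lemma \ref{forcing theorem}) for the quasi-forcing relation: there is a random condition $q\in L_\lambda$ with $y\in[q]$ forcing $\dot x$ to be a specific real, i.e. $q\Vdash^{L_\lambda}\dot x(n)=i$ for the actual digits of $x$. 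But then the same digits are forced on a positive-measure set of randoms, and since $x$ is independent of $y$ — being already an element of $L_\lambda$ — we conclude $x\in L_\lambda$; hence $x$ is writable by Theorem \ref{lambdazetasigma}. The cases for $\zeta$ and $\Sigma$ are identical, using $L_\zeta$ and $L_\Sigma$ in place of $L_\lambda$ and the corresponding clauses of Theorem \ref{lambdazetasigma}.

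The main obstacle, and the place I would be most careful, is the preservation step for $\Sigma$: since $L_\Sigma$ is \emph{not} admissible (unlike $L_\lambda$ and $L_\zeta$), I cannot directly apply the admissibility-based reflection lemmas to it, and the hypotheses of Lemma \ref{Sigma2 reflection} must be met by working at an admissible level above $\Sigma$ — concretely, one should run the argument inside some admissible $L_\delta$ with $\Sigma<\delta$ in which the configuration $L_\lambda\prec_{\Sigma_1}L_\zeta\prec_{\Sigma_2}L_\Sigma$ is expressible, preserve that whole configuration to a random extension, and only then invoke minimality of the triple. Getting the quantifier complexities to line up (the $\Sigma_2$-elementarity of $L_\zeta\prec L_\Sigma$ is exactly at the edge of what Lemma \ref{Sigma2 reflection} delivers, and the remark after that lemma notes the hypotheses are not optimal) and checking that ITTM-randomness of $y$ — rather than randomness over some much taller $L_\gamma$ — already suffices, is where the real work lies; the promised sharpening is presumably carried out in Section \ref{subsection ITTM-random reals}.
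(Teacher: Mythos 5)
The first half of your argument (choosing a sufficiently random $y\in A$, preserving the configuration $L_\lambda[y]\prec_{\Sigma_1}L_\zeta[y]\prec_{\Sigma_2}L_\Sigma[y]$ via Lemma \ref{Sigma2 reflection} applied at the limit-of-admissibles level $\Sigma$, and concluding $\lambda^y=\lambda$, $\zeta^y=\zeta$, $\Sigma^y=\Sigma$ by lexical minimality, hence $x\in L_\lambda[y]$) is exactly the paper's first step. But the final descent from ``$x\in L_\lambda[y]$ for one random $y$'' to ``$x\in L_\lambda$'' has a genuine gap, and in fact cannot work with a single random oracle. Your justification is circular: you write that the forced digits determine $x$ ``since $x$ is independent of $y$ --- being already an element of $L_\lambda$,'' but $x\in L_\lambda$ is precisely the conclusion to be proved (and the earlier assumption that $x$ lies in some $L_\beta$ is likewise unjustified, since a priori $x$ need not be constructible at all, only an element of $L[y]$). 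Moreover, the per-digit forcing step proves too much: for \emph{any} $z\in L_\lambda[y]$ with name $\dot z\in L_\lambda$, including the canonical name for the random real itself, the truth lemma gives for each $n$ a condition containing $y$ that decides $\dot z(n)$; if that sufficed to conclude $z\in L_\lambda$, it would show the random real $y$ is itself in $L_\lambda$, which is false. Knowing each digit is forced somewhere on a positive-measure set does not yield a single ground-model object unless you use that the \emph{same} $x$ is computed from many mutually independent oracles.

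That is the missing idea: the paper takes \emph{two} reals $r,s\in W_x$ with $s$ sufficiently random over $L_\gamma[r]$ for a suitable $\gamma>\Sigma$, argues via the equivalence of the two-step iteration with the product that $(r,s)$ is side-by-side random over $L_{\Sigma+1}$, and then applies Lemma \ref{intersection from mutually randoms} to get $L_\lambda[r]\cap L_\lambda[s]=L_\lambda$; since $x$ is writable from both $r$ and $s$, it lies in the intersection, hence in $L_\lambda$, hence is writable by Theorem \ref{lambdazetasigma}. Your proposal never invokes two mutual randoms or Lemma \ref{intersection from mutually randoms}, and without some substitute (for instance an argument in the style of Theorem \ref{recITTMsacks} that reconstructs each digit of $x$ by comparing the measures of conditions forcing $P^{\dot y}(n)=0$ versus $=1$, which genuinely uses that one program outputs the same $x$ on a positive-measure set of oracles) the conclusion does not follow. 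A smaller quibble: a positive-measure perfect tree $p$ with $[p]\subseteq A$ exists by inner regularity, but there is no reason such a $p$, or a name ``witnessing the computation uniformly on $[p]$,'' lies in any $L_\beta$; the paper's proof avoids this by never requiring $A$ or a condition inside $A$ to be constructible, only that $A$ meets the co-null set of sufficiently random reals.
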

\begin{proof} 
The forward direction is clear in each case. In the other direction, we only prove the writable case, since the proofs of the remaining cases are analogous. 

Let $W_{x}:=\{y:x\leq_{w}y\}$ and choose some sufficiently random $r\in W_x$. 
Since $\Sigma$ is a limit of admissible ordinals (see \cite[Fact 2.5, Lemma 6]{MR2493990}), $L_{\Sigma}[r]=L_{\Sigma}^{r}$ by Lemma \ref{generic extension is subset of L stage} and Lemma \ref{L stage is subset of generic extension} and $L_{\Sigma}[r]$ is an increasing union of admissible sets by Lemma \ref{preservation of admissibility by randoms}. 
We choose some sufficiently random $s\in W_x$ over $L_\Sigma[r]$, in  particular $s$ is random over $L_{\Sigma+1}$. 
Since $L_{\lambda}\prec_{\Sigma_{1}}L_{\zeta}\prec_{\Sigma_{2}}L_{\Sigma}$, we have 
$$L_{\lambda}[r]\prec_{\Sigma_{1}}L_{\zeta}[r]\prec_{\Sigma_{2}}L_{\Sigma}[r]$$ 
by by Theorem \ref{Sigma2 reflection}, and we obtain the same 
elementary chain for $s$. 
Since $(\lambda^r,\zeta^r,\Sigma^r)$ and $(\lambda^s,\zeta^s,\Sigma^s)$ are lexically minimal and the values do not decrease in the extensions by $r$ and $s$, this implies $\lambda=\lambda^r=\lambda^s$, $\zeta=\zeta^r=\zeta^s$ and $\Sigma=\Sigma^r=\Sigma^s$. 

We can assume that $r$ is random over $L_\gamma$ and $s$ is random over $L_\gamma[r]$ for some $\gamma>\Sigma$ such that $L_\gamma$ satisfies a sufficiently strong theory to prove the forcing theorem and facts about random forcing, and such that generics and quasi-generics over $L_\gamma$ coincide (see \cite[Lemma 26.4]{Jech}). 
Since the $2$-step iteration of random forcing is equivalent to the side-by-side random forcing (see \cite[Lemma 3.2.8]{Bartoszynski-Judah}), 
$(r,s)$ is side-by-side random over $L_{\Sigma+1}$.\footnote{Alternatively, the proof of the product lemma or the $2$-step lemma \cite[Lemma 15.9, Theorem 16.2]{MR1940513} can easily be adapted to show directly that $(r,s)$ is side-by-side random over $L_{\Sigma+1}$.}

Since $x$ is writable relative to $r$ and relative to $s$, $x\in L_\lambda[r]\cap L_\lambda[s]=\lambda$ by Lemma \ref{intersection from mutually randoms}, therefore $x$ is writable. 
\end{proof}

As far as we know, the following class is the largest class between $\Pi^1_1$ and $\Sigma^1_2$ that has been studied. 
We write $x\leq_{n-\mathrm{hyp}}y$ if $x$ is computable from $y$ by a $\Sigma_n$-hypermachine introduced in \cite{FW}. 

\begin{thm} \label{variant for hypermachines} 
For all $n\geq 1$, a real $x$ is writable if and only if $\mu(\{y:x\leq_{n-\mathrm{hyp}}y\})>0$
\end{thm}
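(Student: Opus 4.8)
The plan is to mirror the structure of the proof of Theorem~\ref{ITTMSacks}, replacing the reflection input with the $\Sigma_n$ version already available. The forward direction is trivial: a writable real $x$ is writable relative to every oracle, so $\{y:x\leq_{n\text{-}\mathrm{hyp}}y\}$ has full measure. For the converse, fix $n\geq 1$ and set $W_x:=\{y:x\leq_{n\text{-}\mathrm{hyp}}y\}$ with $\mu(W_x)>0$. The first step is to recall the characteristic ordinals of $\Sigma_n$-hypermachines from \cite{FW}: there are ordinals $\lambda_n\leq\zeta_n\leq\Sigma_n$ (the $\Sigma_n$-analogues of $\lambda,\zeta,\Sigma$) such that, relativized to $y$, the $\Sigma_n$-writable reals relative to $y$ are exactly those in $L_{\lambda_n^y}[y]$, and $(\lambda_n^y,\zeta_n^y,\Sigma_n^y)$ is the lexically least triple with $L_{\lambda_n^y}[y]\prec_{\Sigma_n}L_{\zeta_n^y}[y]\prec_{\Sigma_{n+1}}L_{\Sigma_n^y}[y]$, together with $\Sigma_n^y$ a limit of admissibles and the corresponding halting/stabilization facts. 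This is the hypermachine analogue of Theorem~\ref{lambdazetasigma} and Theorem~\ref{ITTMcharacteristics}, and I would cite \cite{FW} for it.

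The second step is the genericity bookkeeping, exactly as in Theorem~\ref{ITTMSacks}. Choose a sufficiently random $r\in W_x$, so that $L_{\Sigma_n}[r]=L_{\Sigma_n}^r$ by Lemma~\ref{generic extension is subset of L stage} and Lemma~\ref{L stage is subset of generic extension} and $L_{\Sigma_n}[r]$ is an increasing union of admissible sets by Lemma~\ref{preservation of admissibility by randoms}; then choose a sufficiently random $s\in W_x$ over $L_{\Sigma_n}[r]$, in particular random over $L_{\Sigma_n+1}$. Working over a suitable $L_\gamma$ with $\gamma>\Sigma_n$ satisfying enough set theory, and using that the $2$-step iteration of random forcing is side-by-side random forcing (\cite[Lemma 3.2.8]{Bartoszynski-Judah}), we get that $(r,s)$ is side-by-side random over $L_{\Sigma_n+1}$.

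The third step is the reflection preservation. Since $L_{\lambda_n}\prec_{\Sigma_n}L_{\zeta_n}\prec_{\Sigma_{n+1}}L_{\Sigma_n}$, Lemma~\ref{Sigma2 reflection} (applied with the indices $n$ and $n+1$, using that $\Sigma_n$ is countable in $L_\gamma$ and $r$, $s$ are random over $L_\gamma$) yields
$$L_{\lambda_n}[r]\prec_{\Sigma_n}L_{\zeta_n}[r]\prec_{\Sigma_{n+1}}L_{\Sigma_n}[r]$$
and likewise for $s$. Because the characteristic triple is lexically least and its coordinates cannot drop in a generic extension (the extension still contains all the reals of $L_{\Sigma_n}$, and the elementary chain above shows the reflection properties persist), we conclude $\lambda_n=\lambda_n^r=\lambda_n^s$, and similarly for $\zeta_n$ and $\Sigma_n$. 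The final step: $x$ is $\Sigma_n$-writable relative to $r$ and relative to $s$, so $x\in L_{\lambda_n^r}[r]\cap L_{\lambda_n^s}[s]=L_{\lambda_n}[r]\cap L_{\lambda_n}[s]$, which equals $L_{\lambda_n}$ by Lemma~\ref{intersection from mutually randoms} (applicable since $\lambda_n$ is admissible and $(r,s)$ is side-by-side random over it). Hence $x\in L_{\lambda_n}$, i.e. $x$ is writable — in fact $\Sigma_n$-writable, but since the argument identifies $x$ as an element of $L_{\lambda_n}\subseteq L_\lambda$ (note $\lambda\leq\lambda_n$, so this needs a small remark) one obtains plain writability.

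The main obstacle I anticipate is verifying that Lemma~\ref{Sigma2 reflection} genuinely applies at the level $\Sigma_{n+1}$ for the top link $L_{\zeta_n}\prec_{\Sigma_{n+1}}L_{\Sigma_n}$: the lemma is stated for a fixed reflection level, and one must check that the randomness hypothesis (randomness over some $L_\gamma$ in which $\Sigma_n$ is countable) is met simultaneously for both links and for both $r$ and $s$; this is handled by choosing $\gamma$ large enough once and for all. A secondary subtlety is the exact statement of the $\Sigma_n$-hypermachine characterization in \cite{FW} — in particular that $L_{\zeta_n}$ is admissible while $L_{\Sigma_n}$ need not be, which is what makes the Levy-hierarchy bookkeeping (and hence the applicability of Lemma~\ref{Sigma2 reflection} rather than a crude variant) go through; if \cite{FW} only gives this in a slightly different form, one would need a short lemma bridging the gap, analogous to the discussion after Theorem~\ref{lambdazetasigma}.
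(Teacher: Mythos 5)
Your proposal is essentially the paper's proof: the paper disposes of this theorem in one line, saying it follows by running the proof of Theorem \ref{ITTMSacks} with the results of \cite{FW} and the $\Sigma_n$-version of Lemma \ref{Sigma2 reflection}, which is exactly the argument you spell out (including the choice of a sufficiently large $L_\gamma$ handling both links of the chain and both randoms at once). One correction to your closing remark: from $x\in L_{\lambda_n}$ you cannot conclude $x\in L_\lambda$, since $\lambda\leq\lambda_n$ gives $L_\lambda\subseteq L_{\lambda_n}$ rather than the reverse inclusion; the word ``writable'' in the statement is to be read as $\Sigma_n$-hypermachine writability, which is precisely what your argument delivers, and no upgrade to ITTM-writability is available (or intended).
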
 
\begin{proof} 
The proof is analogous to the proof of Theorem \ref{ITTMSacks} via the results of \cite{FW} and the version of Lemma \ref{Sigma2 reflection} for $\Sigma_n$-formulas instead of $\Sigma_2$-formulas. 
\end{proof}


\subsection{Recognizable reals from non-null sets} 

We will prove an analogous result as in the previous section, where computable reals are replaced with \emph{recognizable reals} from \cite{MR1771072}. 
This is an interesting and much stronger alternative notion to computability. 
The divergence between computability and recognizability is studied in \cite{MR1771072, recognizablesets}. 

A real is recognizable if its singleton is decidable. 
\emph{Lost melodies}, i.e. recognizable non-computable sets, do not appear in Turing computation, but already exists in the hyperarithmetic setting as $\Pi^1_1$ non-hyperarithmetic singletons. 

\begin{definition} \label{definition: recognizable} 
\begin{enumerate-(a)} 
\item 
A real $x$ is \emph{recognizable} if and only if there is an ITTM-program $P$ such that $P$ halts for every input $y$, with output $1$ if and only if $x=y$. 
\item 
A real $x$ is a  \emph{lost melody} if it is recognizable, but not writable. 
\end{enumerate-(a)} 
\end{definition} 


A simple example for a lost melody is the constructibly least code for a model of $ZFC+V{=}L$. 
It was demonstrated in \cite[Theorem 3.12]{Infinite-time-recognizability-from-random-oracles} that every real that is recognizable from all elements of a non-meager Borel set is itself recognizable. 
The new observation for the following proof is that one can avoid computing generics by working with the forcing relation. 
This also leads to a simpler proof in the non-meager case. 

\begin{thm} \label{recITTMsacks} 
Suppose that a real $x$ is recognizable from all elements of $A$ and $\mu(A)>0$. 
Then $x$ is recognizable.
\end{thm}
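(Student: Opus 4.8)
The plan is to follow the strategy of Theorem~\ref{ITTMSacks}, the one novelty being that we never compute a generic real but only reason about the quasi-forcing relation. Since there are only countably many programs and $\mu(A)>0$, a pigeonhole argument yields a single program $P$ and a set $A_0\subseteq A$ with $\mu(A_0)>0$ such that $P^y$ recognizes $x$ for every $y\in A_0$. As in Theorem~\ref{ITTMSacks} I would then pick $r\in A_0$ sufficiently random over a large $L_\gamma$ (in particular over $L_{\Sigma+1}$) and $s\in A_0$ sufficiently random over $L_\gamma[r]$, so that $(r,s)$ is side-by-side random over $L_\gamma$, $L_\gamma[r]\cap L_\gamma[s]=L_\gamma$ by Lemma~\ref{intersection from mutually randoms}, and $\lambda^r=\lambda^s=\lambda$, $\zeta^r=\zeta^s=\zeta$, $\Sigma^r=\Sigma^s=\Sigma$ by Lemma~\ref{Sigma2 reflection} together with the lexicographic minimality in Theorem~\ref{lambdazetasigma}; the same preservation should be arranged relative to the input oracle $z$ the recognizing program will be given.

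The second ingredient is a forcing-theoretic description of $x$. Let $\dot x_P$ be the name for ``the unique $z$ with $P^{\dot r}(z){\downarrow}=1$'', which makes sense below a condition containing $r$ that forces $P^{\dot r}$ to be a total recognizer; then $\dot x_P^{\,r}=x=\dot x_P^{\,s}$, so $x\in L_\gamma[r]\cap L_\gamma[s]=L_\gamma$. More useful is that the computation statement ``$P^{\dot r}(\,\cdot\,){\downarrow}=1$'' is absolute for sufficiently random reals, so by Lemma~\ref{application of boolean values} the $\infty$-Borel set $\llbracket P^{\dot r}(\check x)=1\rrbracket$, taken over a level built over $x$ that is high enough for the relevant computations to stabilise, equals $\{y:P^y(x){\downarrow}=1\}\supseteq A_0$ and therefore has positive measure; as such a set is a union of Borel sets coded at admissible levels below it (by Lemma~\ref{characterization of ITTM-semidecidable} and the theory of these levels being a limit of admissibles), a positive-measure Borel subset is coded low, and density of random conditions inside it produces a random condition $p$ over this level with $p\Vdash P^{\dot r}(\check x)=1$ in the quasi-forcing sense. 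This $p$ is the ``trace'' of $x$ that an ITTM should be able to detect from the oracle $x$.

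The recognizing program $R$, on input $z$ used as an oracle, should then search for the analogous trace relative to $z$: a random condition $p$ over a suitable admissible level built over $z$ with $p\Vdash P^{\dot r}(\check z)=1$, together with a second, mutually random, certificate coming from $s$, so that Lemma~\ref{intersection from mutually randoms} relative to $z$ forces the certified real back into the ground level and the fact that $P^y$ accepts only $x$ for $y\in A_0$ forces that real to be $z$. The forward implication ($z=x$ gives the certificates) uses the condition $p$ of the previous paragraph; the backward implication uses that for $y\in A_0$ the program $P^y$ accepts only $x$. The step I expect to be the main obstacle is exactly this equivalence, for three reasons that conspire: $A_0$ is an arbitrary positive-measure set, invisible to $R$; recognizable reals --- lost melodies in particular --- can lie far above $L_\Sigma$, so $x$ cannot simply be located at an ITTM-accessible level, and the quasi-forcing relation naturally lives over levels that are not straightforwardly ITTM-computable; and $R$ must halt on \emph{every} input, so the description of $\{x\}$ must be ITTM-\emph{decidable} relative to $z$, not merely semidecidable. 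Handling these is where the quasi-forcing relation (which corresponds to membership in every definable measure-one set rather than to genericity), the Lebesgue density theorem~\ref{Lebesgue density theorem}, the reflection lemmas~\ref{Sigma1 reflection} and~\ref{Sigma2 reflection} (to push the relevant statements down to a level accessible from $z$), and the intersection lemma~\ref{intersection from mutually randoms} must be combined; the key point I would expect to carry the argument is that recognizability from a positive-measure set already forces the relevant Boolean-value sets to have full measure on a condition --- so that the quasi-forcing relation, not the generic forcing relation, is the correct tool --- and that the second mutually random coordinate $s$ is what upgrades ``$z$ is certified'' to ``$z=x$''.
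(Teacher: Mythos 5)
There is a genuine gap here: you assemble the right preliminaries (a single program $P$ by measurability, preservation of $\lambda,\zeta,\Sigma$ for sufficiently random oracles, decidability of the quasi-forcing relation for $\Delta_0$-statements), and you correctly identify the slogan that one should never compute a generic but only reason about the quasi-forcing relation --- but you never actually construct the recognizing machine, and you say yourself that the equivalence it must verify is an unresolved obstacle. The mechanism you do sketch would not work as stated. First, the step ``$\dot x_P^{\,r}=x=\dot x_P^{\,s}$, so $x\in L_\gamma[r]\cap L_\gamma[s]=L_\gamma$'' presupposes that $x\in L_\gamma[r]$ for the fixed $\gamma$ slightly above $\Sigma$, which recognizability of $x$ from $r$ does not give (as you note yourself, recognizable reals can lie far above $L_\Sigma$), and in any case membership of $x$ in some $L_\gamma$ yields no recognition procedure. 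Second, searching for a ``trace'' condition $p$ with $p\Vdash P^{z\oplus\dot y}{\downarrow}=1$ is only a semidecidable criterion and admits false positives: for $z\neq x$ such a $p$ may exist with $[p]$ essentially disjoint from $A$, and the proposed cure --- a ``second, mutually random certificate coming from $s$'' fed into Lemma \ref{intersection from mutually randoms} --- is not something an ITTM can verify, since it can neither compute nor identify random reals; the intersection lemma is a statement about generic extensions, not a checkable certificate. The mutual-randoms/intersection argument is the engine of Theorem \ref{ITTMSacks}, but it plays no role in the proof of this theorem.

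The missing idea is a quantitative, measure-counting recognizer. After fixing $P$, one first shows (using $\lambda^{z\oplus y}=\lambda^z$ for sufficiently random $y$, Lemmas \ref{generic extension is subset of L stage}, \ref{L stage is subset of generic extension} and the truth lemma) that for \emph{every} oracle $z$ the conditions in $L_{\lambda^z}$ which decide the output of $P^{z\oplus\dot y}$ cover $A$ up to a null set; then one normalizes by the Lebesgue density theorem so that $A$ has relative measure $>1-\epsilon$ on a basic open set, with $\epsilon<\tfrac13$. The recognizer $Q$, on oracle $z$, enumerates $L_{\lambda^z}[z]$, collects conditions forcing $P^{z\oplus\dot y}{\downarrow}=1$ and $P^{z\oplus\dot y}{\downarrow}=0$ (this is ITTM-decidable because the quasi-forcing relation for $\Delta_0$-formulas is $\Delta_1$ and measures of Borel codes are computable by $\Delta_1$-recursion), and halts as soon as the total measure $u_0+u_1$ of decided conditions exceeds $1-\epsilon$, outputting $1$ exactly when the accepting measure exceeds $1-2\epsilon$. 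Totality follows from the covering claim applied to $z$; for $z=x$ the rejecting conditions have measure at most $\epsilon$, so $Q^x{\downarrow}=1$; and for $z\neq x$ an accepting condition of measure $>1-2\epsilon$ must meet $A$ in positive measure, producing a random $y\in A\cap[p]$ with $P^{z\oplus y}=1$, contradicting that $P^{\,\cdot\,\oplus y}$ accepts only $x$ for $y\in A$. It is this majority-vote threshold argument, not the intersection of mutually random extensions, that simultaneously secures totality of $Q$ and correctness on all inputs.
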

\begin{proof} 
We can assume that there is a single program $P$ which recognizes $x$ from all oracles in $A$, since the set of oracles which recognize $x$ for a fixed program is absolutely $\Delta^1_2$ and hence Lebesgue measurable (see \cite[Exercise 14.4]{MR2731169}).

\begin{claim} 
Let $D$ be the set of the conditions in $L_{\lambda^x}$ 
which decide whether $x$ is accepted or rejected by $P$ relative to the random real over $L_{\Sigma^x+1}$. 
Then $\mu(A\setminus \bigcup D)=0$.
\end{claim} 
\begin{proof} 
If the conclusion fails, then there is a random real $y$ over $L_{\Sigma^x+1}$ in $A\setminus \bigcup D$. 
Since $P^{x\oplus z}$ converges for any $z\in A$, $P^{x\oplus y}\downarrow=i$ for some $i$. 
Since $\lambda^{x\oplus y}=\lambda^x$ by Theorem \ref{Sigma2 reflection} and $L_{\lambda^x}[x\oplus y]= L_{\lambda^x}^{x\oplus y}$ by Lemma \ref{generic extension is subset of L stage} and Lemma \ref{L stage is subset of generic extension}, there is a name $\dot{C}$ in $L_{\lambda^x}$ and a condition $p$ in $L_{\lambda^x}$ with $y\in [p]$ which forces that $\dot{C}$ is a computation of $P$ with input $x\oplus y$ and output $i$. 
Then $p\in D$ and $y\in \bigcup D$, contradicting the assumption on $y$. 
\end{proof}


By the Lebesgue density theorem, there is an open interval with rational endpoints for which the relative measure of $A$ is $>1-\epsilon$ for some $\epsilon<\frac{1}{3}$. 
We can assume that this interval is equal to ${}^\omega 2$. 

The procedure $Q$ for recognizing $x$ works as follows. 
Suppose that $\dot{y}$ is a name for the random real over $L_{\Sigma+1}$. 
Given an oracle $z$, we enumerate $L_{\lambda^{z}}[z]$ via a universal ITTM. In parallel, we search for pairs $(p,\dot{C})$ in $L_{\lambda^{z}}[z]$ such that
$p$ is a condition and $\dot{C}$ is a name such that $p$ forces over $L_{\lambda^{z}}[z]$ that $\dot{C}$ is a computation of $P$ in the oracle $z\oplus \dot{y}$. 
that halts with output $0$ or $1$. Note that these are $\Delta_{0}$ statements and that the forcing relation for such statements is $\Delta_1$ by Lemma \ref{application of boolean values} and hence \ITTM-decidable.
We keep track of the conditions that force the corresponding computation to halt with output $0$ or with output $1$ on separate tapes. 
Moreover, we keep track of the measures $u_0$ and $u_1$ of the union of all conditions on the two tapes. 
Note that the measure of Borel sets can be computed in admissible sets by a $\Delta_1$-recursion and hence it is ITTM-computable. 
Since $\mu(A)>1-\epsilon$ and $\mu(A\setminus \bigcup D)=0$, eventually $u_0+u_1>1-\epsilon$. 
As soon as this happens, we output $1$ if $u_0>1-2\epsilon$ and $0$ otherwise. 
We claim that $Q^z$ outputs $1$ if and only if $z=x$. 

\begin{claim} 
$Q^{x}\downarrow=1$.
\end{claim} 
\begin{proof} 
The measure of a countable union of sets can
be approximated with arbitrary precision by unions of a finite number of sets. 
Since $\mu(A\setminus \bigcup D)=0$ and $\mu(A)>1-\epsilon$, $\mu(\bigcup D)>1-\epsilon$. 
there are disjoint conditions $p,q\in L_{\lambda^x}[x]$ with $\mu([p]\cup [q])>1-\epsilon$ such that $p$ forces $Q^{x\oplus \dot{y}}\downarrow=1$, 
and $q$ forces $P^{x\oplus \dot{y}}\downarrow=0$. 
Since $\mu(\bigcup D)>1-\epsilon$, $\mu([q])\leq\epsilon$ and hence $\mu([p])>1-2\epsilon$. 
Eventually, such a condition $p$ will be found and hence the procedure halts with output $1$. 
\end{proof} 

\begin{claim} 
$Q^{z}\downarrow=0$ if $z\neq x$. 
\end{claim} 
\begin{proof} 
Suppose that the claim fails. 
Since $Q$ always halts, we have 
$Q^{z}\downarrow=1$.
Then there is a condition $p$ with $\mu([p])>1-2\epsilon$ which forces $P^{z\oplus \dot{y}}\downarrow=1$. 
Since $\mu(A)>1-\epsilon$ and $\epsilon<\frac{1}{3}$, $\mu(A\cap [p])>0$ and hence there is a random $y$ in $A\cap [p]$ over $L_{\lambda^{z}}[z]$. 
Since $y\in [p]$, $P^{z \varoplus y}=1$. 
Since $y\in A$ and $z\neq x$, $P^{z\varoplus y}=0$. 
\end{proof} 
This completes the proof of Theorem \ref{recITTMsacks}. 
\end{proof} 

The results in Section \ref{section random forcing} also imply analogues of Theorem \ref{ITTMSacks} and Theorem \ref{recITTMsacks} for other notions of computation and recognizability, for instance the infinite time register machines \cite{MR2592054} and a weaker variant \cite{MR2592054}. We explore this in further work. 

\section{Random reals} 
\label{section random reals} 

We introduce natural randomness notions associated with infinite time Turing machines and show that they have various desirable properties. 

This is the motivation for the previous results, which we will apply here. 
The results resemble the hyperarithmetic setting, although some proofs are different. 
Theorem \ref{characterization of decidable random} shows a difference to the hyperarithmetic case.

\subsection{ITTM-random reals} \label{subsection ITTM-random reals} 
The following is a natural analogue to $\Pi^1_1$-random. 

\begin{defini} 
A real $x$ is \emph{\ITTM-random} if it is not an element of any \ITTM-semidecidable null set. 
The definition relativizes to reals. 
\end{defini}

We first note that there is a universal test. 
This follows from the following lemma, as in \cite[Theorem 5.2]{MR2340241}. 
\begin{lemma} \label{assign null covers} 
We can effectively assign to each \ITTM-semidecidable set $S$ an \ITTM-semi-decidable set $\hat{S}$ with 
$\mu(\hat{S})=0$, and $\hat{S}=S$ if $\lambda(S)=0$. 
\end{lemma}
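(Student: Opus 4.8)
The plan is to follow the construction of the largest $\Pi^1_1$ null set in \cite[Theorem 5.2]{MR2340241}, with the $\Pi^1_1$-norm stratification replaced by a stratification through levels of the constructible hierarchy over the oracle. By Lemma \ref{characterization of ITTM-semidecidable} I may fix a $\Sigma_1$-formula $\varphi$ with $S=\{x\mid L_{\lambda^x}[x]\models\varphi(x)\}$, and for $\gamma<\omega_1$ I would set
$$S_\gamma=\{y\mid L_{\min(\gamma,\lambda^y)}[y]\models\varphi(y)\}.$$
First I would record the properties of this stratification: each $S_\gamma$ is contained in $S$ (as $\varphi$ is $\Sigma_1$, hence upward absolute along the $L[y]$-hierarchy), the sequence is $\subseteq$-increasing, it is continuous at limit stages (a $\Sigma_1$ fact true in $L_\delta[y]$ is witnessed in some $L_\gamma[y]$ with $\gamma<\delta$, by $\Delta_0$-absoluteness), and $\bigcup_{\gamma<\omega_1}S_\gamma=S$ (for $x\in S$ and $\gamma\ge\lambda^x$ one has $\min(\gamma,\lambda^x)=\lambda^x$). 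Crucially, for $\gamma<\lambda^y$ the set $S_\gamma$ is built by a recursion of length essentially $\gamma$ inside the admissible set $L_{\lambda^y}[y]$, so it has a Borel code writable from any oracle that writes a well-order of type $\gamma$, and its Lebesgue measure is then ITTM-computable (the measure of a writable Borel code is obtained by a $\Delta_1$-recursion in an admissible set, as in the proof of Theorem \ref{recITTMsacks}); for $\gamma\ge\lambda$ one additionally checks that ``$\lambda^y\ge\gamma$'' is Borel in $y$ uniformly in a notation for $\gamma$, so that the truncation in $S_\gamma$ preserves writability.

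Next I would put, for $x\in S$, $\gamma_x<\lambda^x$ equal to the least level with $L_{\gamma_x}[x]\models\varphi(x)$, and define
$$\hat S=\{x\in S\mid \mu(S_\gamma)=0 \text{ for all } \gamma\le\gamma_x\},$$
which is exactly the union of those $S_\gamma$ all of whose predecessors are null. Then $\hat S\subseteq S$ is immediate, and $\hat S=S$ whenever $\mu(S)=0$, since then every $S_\gamma\subseteq S$ is null and so every $x\in S$ satisfies the condition. For nullity I would use that $\{x\mid\lambda^x>\lambda\}$ is null (by the reflection machinery of Section \ref{subsection reflection}, exactly as in the proof of Theorem \ref{ITTMSacks}): since $S$ differs from $\bigcup_{\gamma<\lambda}S_\gamma$ by a subset of this null set, and $\lambda$ has countable cofinality, $\mu(S)=\sup_{\gamma<\lambda}\mu(S_\gamma)$; hence if some $S_\gamma$ has positive measure then the least such stage $\beta_0$ lies below $\lambda$ and, by continuity at limits, is a successor $\beta_0'+1$, so $\hat S\subseteq S_{\beta_0'}$ is null (and if no stage has positive measure, $\mu(S)=0$ and $\hat S=S$ is null). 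Finally, $\hat S$ is ITTM-semidecidable: on input $x$, search (as in the proof of Lemma \ref{characterization of ITTM-semidecidable}) for the least $\gamma_x$ with $L_{\gamma_x}[x]\models\varphi(x)$; then, for each $\gamma\le\gamma_x$, write a Borel code for $S_\gamma$ and compute $\mu(S_\gamma)$, halting exactly if all of these vanish. Since $L_{\lambda^x}[x]$ is admissible this whole procedure runs in fewer than $\lambda^x$ steps when $x\in S$ and diverges otherwise. The assignment $S\mapsto\hat S$ is effective because every step is uniform in an index for $\varphi$.

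The hard part, I expect, is arranging all of this simultaneously. The stratification must exhaust $S$ exactly---which forces length $\omega_1$ and the $\min(\gamma,\lambda^y)$-truncation, since for oracles with $\lambda^x>\lambda$ the witnessing level $\gamma_x$ may exceed $\lambda$---while each individual $S_\gamma$ must nevertheless remain a Borel set whose code and measure are writable relative to any oracle of large enough $\lambda$, in particular relative to $x$ itself when $\gamma\le\gamma_x$; this is what makes the halting analysis of $\hat S$ valid for all $x$, including the null set of $x$ with $\lambda^x>\lambda$, and it rests on the admissibility of $L_{\lambda^x}[x]$ together with the ITTM-computability of measures of Borel codes. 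The other delicate point is that, because the measure of a $\lambda$-length union of writable Borel sets is in general only eventually writable, one cannot branch on ``$\mu(S)=0$'' directly; this is why $\hat S$ is phrased through the oracle-relative level $\gamma_x$ rather than through a single global threshold, and why the nullity of $\{x\mid\lambda^x>\lambda\}$ is invoked to place that threshold below $\lambda$.
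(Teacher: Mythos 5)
Your overall recipe (stratify $S$, let $\hat S$ be the set of points that enter at a stage all of whose predecessors are null, witnessed below $\lambda^z$) is the same as the paper's, and your measure-zero and exactness arguments are fine at the set level. The genuine gap is in the semidecidability of $\hat S$, and it is created by your choice of stratification. Because you stratify by constructible levels, exactness forces the truncation $S_\gamma=\{y\mid L_{\min(\gamma,\lambda^y)}[y]\models\varphi(y)\}$, and for $\gamma\geq\lambda$ this set decomposes as $\{y\mid \lambda^y>\gamma,\ L_\gamma[y]\models\varphi(y)\}\cup(S\cap\{y\mid\lambda^y\leq\gamma\})$. Your algorithm must compute $\mu(S_\gamma)$ for all $\gamma\leq\gamma_x$ on every oracle $x$, including the oracles with $\lambda^x>\lambda$ and $\gamma_x\geq\lambda$; these cannot be discarded, since the clause $\hat S=S$ for null $S$ must hold on all of $S$, and semidecidable null sets do contain such reals. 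For those $\gamma$ you assert that ``$\lambda^y\geq\gamma$ is Borel in $y$ uniformly in a notation for $\gamma$'' so that $S_\gamma$ has a writable Borel code, but this is unjustified and is the load-bearing step: unlike the Turing case, halting of an \ITTM{} is not a Borel condition, the level sets of $y\mapsto\lambda^y$ are not known to be Borel (let alone with codes writable from a code for $\gamma$), and nothing in Sections \ref{section random forcing}--\ref{section ITTMs} gives this. Without it, neither the Borel code nor the measure of $S_\gamma$ is available to the machine, so the procedure you describe does not semidecide the set you defined. Replacing $\mu(S_\gamma)$ by the measure of the untruncated set $\{z\mid L_\gamma[z]\models\varphi(z)\}$ does not repair this, because for $\gamma>\lambda$ that set overshoots $S$ and can have positive measure even when $\mu(S)=0$, destroying exactness.

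The paper avoids the problem by stratifying by halting times of the semideciding program rather than by $L$-levels: with $S_\alpha=\{z\mid P(z)\text{ halts before }\alpha\}$ one has $S_\alpha\subseteq S$ for every $\alpha$ with no truncation needed, each $S_\alpha$ is honestly Borel with a code (hence measure, by the $\Delta_1$ measure recursion in an admissible set) writable from any code for $\alpha$, and every $z\in S$ enters at a stage below $\lambda^z$ simply because $\lambda^z$ bounds the halting times of computations with oracle $z$. Then $\hat S=\{z\mid \exists\alpha<\lambda^z\,(z\in S_\alpha\wedge\mu(S_\alpha)=0)\}$ is uniformly a $\Sigma_1$ condition over $L_{\lambda^z}[z]$, hence semidecidable by Lemma \ref{characterization of ITTM-semidecidable}, with no case distinction at $\lambda$; nullity then follows as in your argument from the co-nullity of $\{z\mid\lambda^z=\lambda\}$ and the fact that there are only countably many stages below $\lambda$. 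If you want to keep your $L$-level picture, you must either prove the uniform Borelness-with-writable-codes of the truncation or switch to the halting-time stratification.
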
 
\begin{proof} 
Suppose that $S$ is an \ITTM-semi-decidable set, given by a program $P$. 
We define $S_\alpha$ as the set of $z$ such that $P(z)$ halts before $\alpha$. 
Note that if $M$ is admissible and contains a code for $\alpha$, then there is a Borel code for $S_\alpha$ in $M$ and hence $\mu(S_\alpha)$ can be calculated in $M$. In particular, $\mu(S_\alpha)$ is \ITTM-writable from any code for $\alpha$. 
Moreover, $\alpha$ is \ITTM-writable in $z$ since $\alpha<\lambda^z$. 
Hence there is a code for $\alpha$ in $L_{\lambda^z}$. 
Let $\hat{S}$  be the set of all $z$ such that there exists some $\alpha<\lambda^z$ with $z\in S_\alpha$ and $\mu(S_\alpha)=0$. 
Morover, let $\hat{S}_\alpha$ denote the set of $z$ with $z\in S_\alpha$ and $\mu(S_\alpha)=0$. 
Since the set of $z$ with $\lambda^z=\lambda$ is co-null by Theorem \ref{ITTMSacks}, $\hat{S}$ is the union of a null set and the sets $\hat{S}_\alpha$ for all $\alpha<\lambda$. 
\end{proof} 
The universal test is the union of all sets $\hat{S}$, where $S$ ranges over the \ITTM-semidecidable sets. 
The following notion is analogous to $\Pi^1_1$-random. 

The following is a variant of van Lambalgen's theorem for ITTMs. 
We say that reals $x$ and $y$ are \emph{mutually random}, in any given notion of randomness, if their join $x\oplus y$ is random. 

\begin{lemma} \label{hardITTMvanLambalgen} 
A real $x$ is \ITTM-random and a real $y$ is \ITTM-random relative to $x$ if and only if $x$ and $y$ are mutually \ITTM-random. 
\end{lemma}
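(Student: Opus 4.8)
The plan is to argue both directions using the characterization of \ITTM-semidecidable sets via $\Sigma_1$-definability over $L_{\lambda^z}[z]$ (Lemma \ref{characterization of ITTM-semidecidable}), the existence of a universal \ITTM-semidecidable null set (Lemma \ref{assign null covers}), and crucially the preservation results for random forcing: that $\lambda$, $\zeta$, $\Sigma$ are preserved by sufficiently random reals (as in the proof of Theorem \ref{ITTMSacks}, using Theorem \ref{Sigma2 reflection}) and that the $2$-step iteration of random forcing is side-by-side random forcing. The key structural fact is that if $x$ is \ITTM-random then $\lambda^x=\lambda$, $\zeta^x=\zeta$, $\Sigma^x=\Sigma$, so that \ITTM-semidecidable-in-$x$ sets are exactly the $\Sigma_1(L_{\lambda}[x])$-definable sets, and the analysis of random forcing over $L_\lambda$, $L_\Sigma$ applies.

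For the forward direction, suppose $x$ is \ITTM-random and $y$ is \ITTM-random relative to $x$; I want $x\oplus y$ \ITTM-random. Let $S$ be an \ITTM-semidecidable null set (in the plain sense); by the universal test I may take $S$ to be the universal one, with code/definition over $L_\lambda$. Write $S$ via its slices $S_\alpha$ (conditions halting by stage $\alpha$) as in Lemma \ref{assign null covers}. Since $x$ is \ITTM-random, $\lambda^x=\lambda$ and $x$ avoids $S$; I need that the fiber $S_x=\{w : x\oplus w\in S\}$ is \ITTM-semidecidable relative to $x$ and null. Semidecidability relative to $x$ is immediate from Lemma \ref{characterization of ITTM-semidecidable} (plug $x$ into the oracle). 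Nullity of $S_x$ for the relevant $x$ is where one uses Fubini together with the preservation of $\lambda$: the set of $x$ with $\mu(S_x)>0$ is itself \ITTM-semidecidable (the statement "$\mu(S_x)>0$ is witnessed below $\lambda^x$" is $\Sigma_1$ over $L_{\lambda^x}[x]$ by the definability of Borel measure in admissible sets), and it is null since $S$ is null by Fubini; hence an \ITTM-random $x$ has $\mu(S_x)=0$. Then $y\notin S_x$ because $y$ is \ITTM-random relative to $x$, i.e. $x\oplus y\notin S$. Ranging over all $S$ gives $x\oplus y$ \ITTM-random.

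For the reverse direction, suppose $x\oplus y$ is \ITTM-random. That $x$ is \ITTM-random follows as in Lemma \ref{intersection from mutually randoms}'s preliminary observation: if $T$ is an \ITTM-semidecidable null set with $x\in T$, then $T\times{}^\omega2$ is \ITTM-semidecidable and null and contains $x\oplus y$, contradiction. For "$y$ is \ITTM-random relative to $x$": suppose not, so $y$ lies in some \ITTM-semidecidable-in-$x$ null set $S_x$, defined by a $\Sigma_1$-formula over $L_{\lambda^x}[x]$. First, since $x$ is \ITTM-random we have $\lambda^x=\lambda$, $\Sigma^x=\Sigma$. The set $S=\{x'\oplus w : w\in S_{x'}\}$ (uniformizing the definition in the oracle) is \ITTM-semidecidable; I must show it is null and contains $x\oplus y$. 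It contains $x\oplus y$ by assumption. For nullity, the set of $x'$ for which $S_{x'}$ is null is co-null (one shows directly, or argues that its complement is an \ITTM-semidecidable set that cannot have positive measure — here is where the side-by-side/iteration structure and preservation of $\lambda,\zeta,\Sigma$ enter, mirroring the proof of Theorem \ref{ITTMSacks}: a positive-measure set of "bad" $x'$ would, via a sufficiently random member, force a contradiction with the slice being null in the iterated extension). The main obstacle is exactly this measure-theoretic bookkeeping in the reverse direction — matching a non-randomness witness for $y$ over $x$ back to a genuine \ITTM-semidecidable null set containing $x\oplus y$ — which requires that passing to the fiber does not change $\lambda$, and that the iteration of random forcing over $L_\lambda$ behaves like side-by-side random forcing so that Fubini can be applied at the level of the definable forcing relation rather than to concrete generics; all of this is supplied by Section \ref{section random forcing} and the argument of Theorem \ref{ITTMSacks}.
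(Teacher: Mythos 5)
Your forward direction is essentially the paper's argument: you show that $\{x' \mid \mu(S_{x'})>0\}$ is semidecidable (the paper does this via the quasi-forcing relation over $L_{\Sigma^{x'}}$, you via halting-time slices below $\lambda^{x'}$ together with the preservation of $\lambda$ for almost all oracles -- these are equivalent in substance), it is null by Fubini, so an ITTM-random $x$ avoids it, and then $y$ avoids the null section $S_x$, which is semidecidable in $x$. That part is fine.

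The reverse direction, however, has a genuine gap at exactly the point you flag as "measure-theoretic bookkeeping". If $y$ lies in a null set $S_x$ semidecidable in $x$ via a program $P$, the two-variable set $S=\{x'\oplus w \mid P^{x'\oplus w}\downarrow\}$ is semidecidable and contains $x\oplus y$, but there is no reason for it to be null: the behaviour of $P$ on oracles $x'\neq x$ is completely unconstrained. Your claim that $\{x' \mid \mu(S_{x'})=0\}$ is co-null is false in general -- for instance, $P$ could halt on $x'\oplus w$ whenever the first bit of $x'$ is $0$, so that half of the sections have full measure while $S_x$ is still null; no amount of forcing over iterated extensions rules this out, because the hypothesis only concerns the single oracle $x$. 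Consequently $S$ may have positive measure and its containing $x\oplus y$ contradicts nothing. The missing ingredient is the effective trimming of Lemma \ref{assign null covers}: applying the assignment $S\mapsto\hat{S}$ uniformly in the first coordinate produces a semidecidable subset $B$ of ${}^\omega 2\times{}^\omega 2$ all of whose sections are null (hence $B$ is null), and with $B_{x'}=S_{x'}$ whenever $S_{x'}$ is null; in particular $B_x=S_x\ni y$, so $x\oplus y\in B$, which is the genuine semidecidable null set needed to contradict mutual ITTM-randomness. Note also that one cannot simply intersect $S$ with $\{x'\oplus w\mid \mu(S_{x'})=0\}$, since that set is only co-semidecidable; the $\hat{S}$-construction is what makes the trimmed set semidecidable.
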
 
\begin{proof} 
Suppose that $x$ is \ITTM-random and $y$ is \ITTM-random relative to $x$. 
Moreover, suppose that $x$ and $y$ are not mutual \ITTM-randoms. 
Then there is an \ITTM-semidecidable set $A$ given by a program $P$ such that $x\oplus y\in A$. 
Let $A_u=\{v\mid u \oplus v\in A\}$ denote the section of $A$ at $u$. 
Let 
$$A_{>q} :=\{u\mid \mu(A_u)>q\}$$ 
for $q\in \mathbb{Q}$. 
Note that $u\in A_{>q}$ if and only if some condition in $L_{\Sigma^u}$ with measure $r>q$ in $\mathbb{Q}$ forces that $P(\check{u},\dot{v})$ halts, where $\dot{v}$ is a name for the random real over $L_{\Sigma^u}$, by Lemma \ref{application of boolean values}. 
This is a $\Sigma_1$-statement in $L_{\Sigma^u}$ and therefore in $L_{\lambda^u}$. 
Then the set $A_{>q}$ is semidecidable by Lemma \ref{characterization of ITTM-semidecidable}, uniformly in $q\in \mathbb{Q}$. 
Since $\mu(A)=0$, $\mu(A_{>0})=0$. 
Since $x$ is \ITTM-random, $x\notin A_{>0}$ and hence $\mu(A_x)=0$. 
Note that $A_x$ is semidecidable in $x$. 
Since $y$ is \ITTM-random relative to $x$, this implies $y\notin A_x$, contradicting the assumption that $x\oplus y\in A$. 

Now suppose that $x$ and $y$ are mutually \ITTM-random. 
To show that $x$ is \ITTM-random, suppose that $A$ is a semidecidable null set with $x\in A$. 
Then $A\oplus {}^\omega 2$ is a semidecidable null set containing $x\oplus y$, contradicting the assumption that $x$ and $y$ are mutually \ITTM-random. 
To show that $y$ is \ITTM-random relative to $x$, suppose that $y$ is an element of a semidecidable null set $A$ relative to $x$. 
Since the construction of $\hat{S}$ in Lemma \ref{assign null covers} is effective, there is a semidecidable null subset $B$ of ${}^\omega 2\times {}^\omega 2$ with $A=B_x$ (in fact, all sections of $B$ are null). 
Then $x\oplus y\in A$, contradicting the assumption that $x$ and $y$ are mutual \ITTM-randoms. 
\end{proof}

The following result is analogous to the statement that a real $x$ is $\Pi^1_1$-random can be characterized by $\Delta^1_1$-randomness and $\omega_1^x=\om$ (see \cite[Theorem 9.3.9]{MR2548883}). 

\begin{thm} \label{characterization of ITTM-random} 
A real $x$ is \ITTM-random if and only if it is random over $L_\Sigma$ and $\Sigma^x=\Sigma$. 
Moreover, this implies $\lambda^x=\lambda$. 
\end{thm}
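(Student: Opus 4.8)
The plan is to drive both implications with a single engine: Welch's characterization (Theorem~\ref{lambdazetasigma}) of $(\lambda,\zeta,\Sigma)$ as the lexically least triple carrying $L_\lambda\prec_{\Sigma_1}L_\zeta\prec_{\Sigma_2}L_\Sigma$, together with the reflection Lemmas~\ref{Sigma1 reflection} and~\ref{Sigma2 reflection} in the form valid for \ITTM-random reals announced after Lemma~\ref{Sigma2 reflection}. I will use freely that $L_\alpha[x]=L_\alpha^x$ when $\alpha$ is a limit of admissibles and $x$ is random over $L_\alpha$ (Lemmas~\ref{generic extension is subset of L stage},~\ref{L stage is subset of generic extension}); in particular $L_\Sigma[x]=L_\Sigma^x$. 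Two further ingredients. First, $\{z\mid\lambda^z>\lambda\}$ is an \ITTM-semidecidable null set: it is null by Theorem~\ref{ITTMSacks}, and $\lambda^z>\lambda$ holds iff some $z$-writable real codes a well-order of length $\geq\lambda$, which is \ITTM-semidecidable in $z$ by Lemma~\ref{characterization of ITTM-semidecidable} (a writable code for $\lambda$ is then available for the comparison). Second, if $c$ is an accidentally writable Borel code of a null set then $A_c\subseteq\hat A$, where $\hat A$ is the set of $z$ such that some accidentally writable real is a Borel code of a null set containing $z$; this $\hat A$ is \ITTM-semidecidable (\ITTM s decide $\Pi^1_1$, so ``$w$ is a well-founded Borel code of a null set'' and ``$z\in A_w$'' are \ITTM-decidable uniformly in $w,z$) and null (there are only countably many accidentally writable reals).

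\emph{From \ITTM-randomness to the two conditions.} Suppose $x$ is \ITTM-random, so $x$ avoids every \ITTM-semidecidable null set. Avoiding $\{z\mid\lambda^z>\lambda\}$ gives $\lambda^x=\lambda$; and since ${}^\omega2\cap L_\Sigma$ is exactly the set of accidentally writable reals (Theorem~\ref{lambdazetasigma}), avoiding the sets $\hat A$ above shows $x\notin A_c$ for every Borel null $A_c$ coded in $L_\Sigma$, i.e.\ $x$ is random over $L_\Sigma$. Feeding $\lambda^x=\lambda$ and randomness over $L_\Sigma$ into Lemmas~\ref{Sigma1 reflection} and~\ref{Sigma2 reflection} (in their \ITTM\ form) yields $L_\lambda[x]\prec_{\Sigma_1}L_\zeta[x]\prec_{\Sigma_2}L_\Sigma[x]$; thus $(\lambda,\zeta,\Sigma)$ carries the reflection chain in the hierarchy over $x$, while $(\lambda^x,\zeta^x,\Sigma^x)$ is the lexically least such triple (Theorem~\ref{lambdazetasigma}) and dominates $(\lambda,\zeta,\Sigma)$ coordinatewise by monotonicity of relativization. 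Hence $(\lambda^x,\zeta^x,\Sigma^x)=(\lambda,\zeta,\Sigma)$; in particular $\Sigma^x=\Sigma$, and the ``moreover'' clause $\lambda^x=\lambda$ is recovered as well.

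\emph{From the two conditions to \ITTM-randomness.} Suppose $x$ is random over $L_\Sigma$ and $\Sigma^x=\Sigma$. As in the previous paragraph, reflection (now using $\Sigma^x=\Sigma$ to localize the argument inside $L_\Sigma[x]=L_\Sigma^x$) together with Theorem~\ref{lambdazetasigma} gives $(\lambda^x,\zeta^x,\Sigma^x)=(\lambda,\zeta,\Sigma)$, in particular $\lambda^x=\lambda$ (this also proves the ``moreover''). Now let $S$ be any \ITTM-semidecidable null set. By Lemma~\ref{assign null covers} we may take $S=\hat S$, so $S=N\cup\bigcup_{\alpha<\lambda}\hat S_\alpha$ where $N\subseteq\{z\mid\lambda^z>\lambda\}$ is null and each $\hat S_\alpha$ is Borel and null with a code in $L_\lambda\subseteq L_\Sigma$ (using that $\lambda$ is a limit of admissibles). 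Since $x$ is random over $L_\Sigma$ it avoids every $\hat S_\alpha$, and since $\lambda^x=\lambda$ it lies outside $N$; hence $x\notin S$, so $x$ is \ITTM-random.

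\emph{The main obstacle.} The delicate point is the $\Sigma_2$-reflection step $L_\zeta[x]\prec_{\Sigma_2}L_\Sigma[x]$: Lemma~\ref{Sigma2 reflection} as stated requires $x$ to be random over some $L_\gamma$ with $\Sigma$ countable in $L_\gamma$, which is strictly stronger than randomness over $L_\Sigma$. Overcoming this is precisely the content of the remark after Lemma~\ref{Sigma2 reflection}, and I expect it to be where the real work lies. The strategy should be that, using $\Sigma^x=\Sigma$ (equivalently $\lambda^x=\lambda$) together with $L_\Sigma[x]=L_\Sigma^x$, the Boolean-value sets appearing in the proof of Lemma~\ref{Sigma2 reflection} are, modulo null sets, Borel sets whose codes are accidentally writable in $x$; the maximal-antichain and Lebesgue-density constructions of that proof then take place inside $L_\Sigma[x]$ itself, so that randomness over $L_\Sigma$ already suffices. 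Once this sharpened reflection lemma is in place, the rest is bookkeeping with Theorems~\ref{lambdazetasigma}, \ref{ITTMSacks} and Lemma~\ref{assign null covers}.
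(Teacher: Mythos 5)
The central gap is in the forward direction, at the point where you derive $\Sigma^x=\Sigma$ (and $\zeta^x=\zeta$, $\lambda^x=\lambda$) by establishing the full chain $L_\lambda[x]\prec_{\Sigma_1}L_\zeta[x]\prec_{\Sigma_2}L_\Sigma[x]$ and then quoting lexical minimality from Theorem~\ref{lambdazetasigma}. The $\Sigma_2$-step of that chain is exactly what is not available at this level of randomness: Lemma~\ref{Sigma2 reflection} requires $x$ to be random over some $L_\gamma$ in which $\Sigma$ is countable, and the ``\ITTM\ form'' you invoke is only asserted in the paper \emph{as a consequence} of the theorem you are proving, so appealing to it here is circular. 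You flag this yourself and sketch a repair, but observe that if your repair worked, your own lexical-minimality argument would yield $\zeta^x=\zeta$ for every \ITTM-random $x$, which the paper explicitly records as open immediately after this theorem; so this is not residual bookkeeping but the missing core. The paper's proof avoids $\Sigma_2$-reflection entirely: for $\Sigma^x=\Sigma$ it shows directly that $\{y\mid \Sigma^y>\Sigma\}$ is an \ITTM-semidecidable null set (null because $\Sigma^y=\Sigma$ for all sufficiently random $y$; semidecidable because the existence of $\Sigma$ is a $\Sigma_1$-statement over $L_{\Sigma^y}[y]$, hence over $L_{\lambda^y}[y]$ by $\Sigma_1$-elementarity, hence semidecidable by Lemma~\ref{characterization of ITTM-semidecidable}), and for the ``moreover'' clause it uses only $\Sigma_1$-reflection (Lemma~\ref{Sigma1 reflection} with $\alpha=\lambda$, $\beta=\Sigma$, which needs nothing beyond randomness over $L_\Sigma$) together with the minimality of $\lambda^x$; the ordinal $\zeta$ is never touched. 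Relatedly, your auxiliary claim that $\{z\mid\lambda^z>\lambda\}$ is \ITTM-semidecidable is unjustified as stated: no code for $\lambda$ is writable, and even when $\lambda^z>\lambda$ the machine has no way to recognize that a given $z$-writable wellorder has type $\geq\lambda$ --- that is a $\Pi_1$-type condition over $L_{\lambda^z}[z]$, not visibly $\Sigma_1$ --- and the paper only ever needs this set to be null, never semidecidable.

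On the positive side, two pieces of your proposal are sound and one is a genuine alternative. Your argument that the non-randoms over $L_\Sigma$ form an \ITTM-semidecidable null set is the paper's argument. And your backward direction via Lemma~\ref{assign null covers} --- writing a semidecidable null set $S=\hat S$ as a subset of $\{z\mid\lambda^z>\lambda\}$ together with countably many null Borel sets $\hat S_\alpha$ coded in $L_\lambda\subseteq L_\Sigma$, so that randomness over $L_\Sigma$ plus $\lambda^x=\lambda$ gives $x\notin S$ --- is correct and differs from the paper, which instead applies the truth lemma: since $P(x)$ halts before $\lambda^x<\Sigma^x=\Sigma$, some condition containing $x$ forces the halting over $L_\Sigma$, so the semidecidable set has positive measure. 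Your version goes through, but only after you obtain $\lambda^x=\lambda$ by the legitimate route ($\Sigma_1$-reflection over $L_\Sigma$ plus minimality of $\lambda^x$, using the hypothesis $\Sigma^x=\Sigma$), not through the problematic full chain.
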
 
\begin{proof} 
First suppose that  $x$ is \ITTM-random. 
We first claim that $x$ is random over $L_\Sigma$. 
Since every real in $L_\Sigma$ is accidentally writable, 
we can enumerate all Borel codes in $L_\Sigma$ for sets $A$ with $\mu(A)=0$ and test whether $x$ is an element of $A$. 
Therefore the set of reals which are not random over $L_\Sigma$ is an ITTM-semidecidable set with measure $0$, and hence $x$ is random over $L_\Sigma$. 
We now claim that $\Sigma^x=\Sigma$. 
Since $\Sigma^y=\Sigma$ holds for all sufficiently random reals by Lemma \ref{Sigma2 reflection}, the set $A$ of reals $y$ with $\Sigma^y>\Sigma$ has measure $0$. 
Since the existence of 
$\Sigma$ is a $\Sigma_1$-statement over $L_{\Sigma^y}$, the set $A$ is semidecidable. 
Since $x$ is \ITTM-random, $x\notin A$ and hence $\Sigma^x=\Sigma$. 

Second, suppose that $x$ is random over $L_\Sigma$ and $\Sigma^x=\Sigma$. 
Suppose that $A$ is a semi-decidable null set containing $x$ given by a program $P$. 
Then $P(x)$ halts before $\lambda^x<\Sigma^x=\Sigma$ and hence some condition $p$ forces over $L_{\Sigma}$ that $P(x)$ halts, by Lemma \ref{application of boolean values}. 
Then $\mu(A)>0$, contradicting the assumption that $A$ is null. 

To show that $\lambda^x=\lambda$, note that $L_\lambda[x]\prec_{\Sigma_1} L_\Sigma[x]=L_{\Sigma^x}[x]$ by Lemma \ref{Sigma2 reflection}. 
Since $\lambda^x$ is minimal with this property, $\lambda^x\leq \lambda$. 
\end{proof} 

This shows that the level of randomness in the assumption of Lemma \ref{Sigma2 reflection}  can be improved to \ITTM-random for $\alpha=\zeta$, $\beta=\Sigma$. 

Surprisingly, we do not know if $\zeta^x=\zeta$ for \ITTM-randoms $x$. 
This does not follow from the proof of Lemma \ref{Sigma2 reflection} , since the set $\bar{A}$ defined in the beginning of the proof is not \ITTM-semidecidable, but this would be needed for the proof of Claim \ref{Sigma2 statement reflects} in the proof of Lemma \ref{Sigma2 reflection}. 

We obtain the following variant of Theorem \ref{ITTMSacks}. 

\begin{thm} \label{computable from mutual ITTM-randoms} 
If $x$ is computable from both $y$ and $z$ and $y$ is \ITTM-random in $z$, then $x$ is computable. 
In particular, this holds if $y$ and $z$ are mutual \ITTM-randoms. 
\end{thm}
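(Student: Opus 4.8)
The plan is to derive this from Theorem~\ref{ITTMSacks}(1) by showing that the set $W_x := \{w \in {}^\omega 2 : x \leq_w w\}$ has positive Lebesgue measure. First I would dispose of the ``in particular'' clause: if $y$ and $z$ are mutual \ITTM-randoms, then $y\oplus z$ is \ITTM-random, so by van Lambalgen's theorem (Lemma~\ref{hardITTMvanLambalgen}) the real $y$ is \ITTM-random relative to $z$; hence it suffices to prove the first statement, and in fact the argument will not even use that $z$ is \ITTM-random.

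So assume $x\leq_w y$, $x\leq_w z$ and $y$ is \ITTM-random relative to $z$. Then $y,z\in W_x$. The crucial observation is that $W_x$ is \ITTM-semidecidable relative to $z$. Indeed, $W_x$ is \ITTM-semidecidable relative to $x$: on input $w$ and with oracle $x$, run all programs on oracle $w$ in parallel and halt as soon as one of them halts with output $x$; this halts exactly when $x\leq_w w$. Since $x\leq_w z$, the oracle $z$ suffices to first write $x$ and then carry out this procedure, so $W_x$ is \ITTM-semidecidable in $z$. Moreover $W_x$ is absolutely $\Delta^1_2$ in $z$ and therefore Lebesgue measurable, as in the remark following Lemma~\ref{characterization of ITTM-semidecidable}. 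If $\mu(W_x)=0$, then $W_x$ would be an \ITTM-semidecidable-in-$z$ null set containing $y$, contradicting the assumption that $y$ is \ITTM-random relative to $z$. Hence $\mu(W_x)>0$, and $x$ is writable, i.e.\ computable, by Theorem~\ref{ITTMSacks}(1).

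I do not expect a genuine obstacle, since the substantial work is already packaged in Theorem~\ref{ITTMSacks}; the only step requiring a little care is checking that $W_x$ is \ITTM-semidecidable relative to $z$ rather than merely relative to $x$, which is exactly the point where the hypothesis $x\leq_w z$ enters. There is also a more hands-on argument that handles the mutual-randoms case: van Lambalgen gives $y$ \ITTM-random in $z$, Theorem~\ref{characterization of ITTM-random} yields $\lambda^y=\lambda^z=\lambda$ together with randomness over $L_\Sigma$, so $x\in L_\lambda[y]\cap L_\lambda[z]$, and one concludes via the intersection Lemma~\ref{intersection from mutually randoms} after verifying that $y$ and $z$ are side-by-side random over $L_\lambda$; but since this covers only the special case, the argument through $W_x$ is the one I would present.
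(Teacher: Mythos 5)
Your proof is correct and follows essentially the same route as the paper: the paper fixes programs $P,Q$ with $P(y)=Q(z)=x$ and argues that $A=\{u\mid P(u)=Q(z)\}$ is semidecidable in $z$, so that $\mu(A)=0$ would contradict $y$ being \ITTM-random in $z$, while $\mu(A)>0$ gives computability of $x$ via Theorem~\ref{ITTMSacks}. Your use of the full cone $W_x$ (semidecidable in $z$ since $x\leq_w z$, via dovetailing all programs) instead of the single-program set $A$ is only a cosmetic variation of the same argument.
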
 
\begin{proof} 
Suppose that $P(y)=Q(z)=x$. 
Then $A=\{u\mid P(u)=Q(z)\}$ is semidecidable in $z$. 
If $\mu(A)>0$, then $x$ is computable from all element of a set of positive measure and hence $x$ is computable by Theorem \ref{ITTMSacks}. 
Suppose that $\mu(A)=0$. 
Then $y\notin A$, since $y$ is \ITTM-random in $z$, contradicting the assumption that $y\in A$. 
\end{proof}

\subsection{A decidable variant} \label{subsection ITTM decidable random} 

Martin-L\"of suggested to study $\Delta^1_1$-random reals.
The following variant of \ITTM-random is an analogue to $\Delta^1_1$-random. 

\begin{defini} 
A real is \emph{\ITTM-decidable random} if it is not an element of any decidable null set. 
\end{defini} 
We now give a  characterization of this notion. 
We call a real co-\ITTM-random if it avoids the complement of every semidecidable set of measure $1$. 
The following result is analogous to the equivalence of $\Delta^1_1$-random and $\Sigma^1_1$-random \cite[Exercise 14.2.1]{Yu}. 

\begin{thm} \label{characterization of decidable random}
The following properties are equivalent. 
\begin{enumerate-(a)} 
\item 
$x$ is co-\ITTM-random. 
\item 
$x$ is \ITTM-decidable random. 
\item 
$x$ is random over $L_\lambda$. 
\end{enumerate-(a)} 
\end{thm}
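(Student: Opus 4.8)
The plan is to prove the cycle of implications $(a)\Rightarrow(b)\Rightarrow(c)\Rightarrow(a)$. For $(a)\Rightarrow(b)$, suppose $x$ is co-\ITTM-random and let $A$ be a decidable null set with $x\in A$. Then the complement ${}^\omega2\setminus A$ is a semidecidable set of measure $1$ (semidecidable because $A$ is decidable), and $x$ lies in its complement $A$, contradicting co-\ITTM-randomness; hence no such $A$ exists.

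For $(b)\Rightarrow(c)$, I would argue contrapositively: if $x$ is not random over $L_\lambda$, then $x$ lies in some Borel null set $B$ with a Borel code in $L_\lambda$. Since $L_\lambda$ is the set of writable reals by Theorem \ref{lambdazetasigma} (and $L_\lambda$ being admissible, measures of Borel codes are computed by a $\Delta_1$-recursion, hence ITTM-computably), the code for $B$ is writable and membership in $B$ is ITTM-decidable uniformly; so $B$ is a decidable null set containing $x$, and $x$ is not \ITTM-decidable random. The key point is that a writable Borel code yields a \emph{decidable} (not merely semidecidable) set, because one can run the halting computation that decides membership of $x$ in $B$ from the code, and a complementary code decides non-membership.

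For $(c)\Rightarrow(a)$, suppose $x$ is random over $L_\lambda$ and let $S$ be a semidecidable set of measure $1$, given by a program $P$; I want $x\in S$. As in Lemma \ref{assign null covers}, let $S_\alpha=\{z: P(z)\text{ halts before }\alpha\}$; for admissible $\alpha$ with a code in $L_\lambda$, $S_\alpha$ has a Borel code in $L_\lambda$ and $\mu(S_\alpha)$ is computable there. Since $\mu(S)=1$, the complement ${}^\omega2\setminus S$ is null, so for every $z\in S$ we have $z\in S_\alpha$ for some $\alpha<\lambda^z$, and the sets ${}^\omega2\setminus S_\alpha$ for $\alpha<\lambda$ are Borel null sets with codes in $L_\lambda$ (using $\mu(S_\alpha)\nearrow 1$, which follows from admissibility of $\lambda$ as in Lemma \ref{application of koenigs lemma} applied to the complements). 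Because $x$ is random over $L_\lambda$, $x$ avoids all of these, so $x\in\bigcap_{\alpha<\lambda}S_\alpha^c$'s complement $=\bigcup_{\alpha<\lambda}S_\alpha\subseteq S$ — more precisely, $x\in S_\alpha$ for some $\alpha<\lambda$, hence $x\in S$, provided $\lambda^x=\lambda$; and $\lambda^x=\lambda$ holds for randoms over $L_\lambda$ by Theorem \ref{characterization of decidable random}'s companion results, or directly since $L_\lambda\prec_{\Sigma_1}L_\Sigma$ reflects by Lemma \ref{Sigma2 reflection} to give $\lambda^x\leq\lambda$ while randomness over $L_\lambda$ rules out $\lambda^x<\lambda$ (any admissible collapse would be witnessed by a null set coded in $L_\lambda$).

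The main obstacle I anticipate is the bookkeeping in $(c)\Rightarrow(a)$: one must ensure that the approximations $S_\alpha$ genuinely exhaust $S$ up to measure zero \emph{as seen inside $L_\lambda$}, i.e. that $\sup_{\alpha<\lambda}\mu(S_\alpha)=1$, which uses the admissibility of $\lambda$ to prevent the halting times from being cofinal in $\lambda$ for a positive-measure set of $z$; and one must handle the reals $z$ with $\lambda^z>\lambda$ separately, observing that this set is itself null (by Theorem \ref{ITTMSacks} or the argument that $\{z:\lambda^z>\lambda\}$ is covered by null sets coded in $L_\lambda$) so that $x$, being random over $L_\lambda$, has $\lambda^x=\lambda$. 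The other implications are essentially formal once the decidable/semidecidable dichotomy under complementation is spelled out.
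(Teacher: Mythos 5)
Your implications (a)$\Rightarrow$(b) and (b)$\Rightarrow$(c) are fine and coincide with the paper's proof. The genuine gap is in (c)$\Rightarrow$(a), at the step where you assert that ``the sets ${}^\omega 2\setminus S_\alpha$ for $\alpha<\lambda$ are Borel null sets with codes in $L_\lambda$''. Knowing $\sup_{\alpha<\lambda}\mu(S_\alpha)=1$ does not make any individual complement null, and randomness over $L_\lambda$ only lets $x$ avoid null sets \emph{with a Borel code in} $L_\lambda$: the set $\bigcap_{\alpha<\lambda}({}^\omega 2\setminus S_\alpha)$ is an intersection of length $\lambda$, and since $\lambda$ is the class of all ordinals from the point of view of $L_\lambda$, this intersection has no code in $L_\lambda$. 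So ``$x$ avoids all of these, hence $x\in S_\alpha$ for some $\alpha<\lambda$'' does not follow. What is actually needed is a \emph{single} $\bar\gamma<\lambda$ with $\mu(S_{\bar\gamma})=1$; then ${}^\omega 2\setminus S_{\bar\gamma}$ is a null set coded in $L_\lambda$, so $x\in S_{\bar\gamma}\subseteq S$, and no assumption $\lambda^x=\lambda$ about the given $x$ is needed at all. Your final paragraph identifies only the weaker requirement $\sup_{\alpha<\lambda}\mu(S_\alpha)=1$, and the appeal to Lemma \ref{application of koenigs lemma} (a statement about sequences indexed by ${}^{<\omega}2$) does not produce such a bound.

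The paper closes exactly this hole by a reflection argument: almost every $y$ (namely every sufficiently random $y$, which satisfies $\lambda^y=\lambda$ by Lemma \ref{Sigma2 reflection}) lies in $S$ and sees $P(y)$ halt before $\lambda$, so in $L_\Sigma$ the quasi-forcing Boolean value of ``$P(\dot x)$ halts strictly before $\gamma$'' has measure one for $\gamma=\lambda$; the existence of such a $\gamma$ is a $\Sigma_1$-statement, and $L_\lambda\prec_{\Sigma_1}L_\Sigma$ yields the desired $\bar\gamma<\lambda$. Your admissibility idea can be made to work as an alternative, but it must be carried out: granted that $\{z\mid\lambda^z>\lambda\}$ is null (a fact about very random reals, obtained from Lemma \ref{Sigma2 reflection}, not from randomness of $x$ over $L_\lambda$), the map sending a rational $q<1$ to the least $\alpha$ with $\mu(S_\alpha)\geq q$ is total and $\Sigma_1$-definable over $L_\lambda$, so $\Sigma_1$-bounding in the admissible set $L_\lambda$ bounds its range by some $\bar\gamma<\lambda$, and then $\mu(S_{\bar\gamma})=1$. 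Finally, your side claim that every real random over $L_\lambda$ satisfies $\lambda^x=\lambda$ is unsupported: Lemma \ref{Sigma2 reflection} needs randomness over some $L_\gamma$ in which $\Sigma$ is countable, far more than randomness over $L_\lambda$ (and $\lambda^x<\lambda$ is impossible for trivial relativization reasons, so that part of your dichotomy is vacuous); fortunately, once the bound $\bar\gamma$ is available, this claim is not needed.
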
 
\begin{proof} 
The first implication is clear. 

For the second implication, note that since every Borel set with a Borel code in $L_\lambda$ is \ITTM-decidable, every \ITTM-decidable random real $x$ is random over $L_\lambda$. 

For the remaining implication, suppose that $x$ is random over $L_\lambda$ and $P$ is a program that decides the complement of a null set $A$ with $x\in A$. 
Suppose that $\dot{x}$ is the canonical name for the random real (note that this name is equal for randoms over arbitrary admissible sets). 
Relative to the set of random reals $y$ over $L_{\Sigma+1}$, $A$ is definable over $L_\Sigma$, since $\Sigma^y=\Sigma$ by Theorem \ref{Sigma2 reflection}. 
Hence $y\notin A$ and $P(y)$ halts before $\lambda^y=\lambda$ for any such real. 
Therefore in $L_\Sigma$, there is some $\gamma$ (namely $\lambda$) such that the Boolean value of the statement that $P(\dot{x})$ halts strictly before $\gamma$ is equal to $1$. 
The existence of such an ordinal $\gamma$ is a $\Sigma_1$-statement, hence there is such an ordinal $\bar{\gamma}<\lambda$ such that the statement holds in $L_\lambda$ for $\bar{\gamma}$, by $\Sigma_1$-reflection. 
Let $A$ denote the Boolean value of the statement that $P(\dot{x})$ halts before $\bar{\gamma}$. 
Then $A$ is a Borel set with a Borel code in $L_\lambda$ and $\mu(A)=1$. 
Therefore $x\in A$ and $P(x)$ halts before $\lambda$, contradicting the assumption that $x\in A$. 
\end{proof} 

Hence the distance between the analogues to $\Delta^1_1$-random and $\Pi^1_1$-random is larger than for the original notions. 

\begin{lemma} 
There is no universal \ITTM-decidable random test. 
\end{lemma}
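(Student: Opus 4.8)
The plan is to show that there is no universal ITTM-decidable random test by exploiting the characterization from Theorem~\ref{characterization of decidable random}: a real is ITTM-decidable random if and only if it is random over $L_\lambda$. If there were a universal test, then the largest ITTM-decidable null set $N$ would exist, and the complement of $N$ would be exactly the set of reals random over $L_\lambda$. So the first step is to reduce the statement to: the set of reals that are \emph{not} random over $L_\lambda$ is not ITTM-decidable (equivalently, not contained in a single ITTM-decidable null set).

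The core obstruction is that the enumeration of Borel codes in $L_\lambda$ for null sets is only $\Sigma_1$-definable over $L_\lambda$ (via a writable search), not $\Delta_1$; so the union of all these null sets is ITTM-semidecidable but there is no a priori reason for it to be decidable, and in fact it is not. To make this precise I would argue by contradiction: suppose $N$ is an ITTM-decidable null set with $N \supseteq \{x : x \text{ is not random over } L_\lambda\}$, decided by a program $P$ which halts on every input. Since $P$ halts on every input $z$, it halts before $\lambda^z$, and by the co-null set of reals with $\lambda^z = \lambda$ (Theorem~\ref{ITTMSacks}) together with the argument in the proof of Lemma~\ref{assign null covers}, the decision $P$ makes about membership in $N$ is, for a co-null set of $z$, computed already inside $L_\lambda$ in a uniform way. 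This would make $N$ (up to a null set) a Borel set with a Borel code in $L_\lambda$, hence the \emph{complement} of $N$ would, up to a null set, be a set of reals coded in $L_\lambda$ of measure $1$; but then a real random over $L_\lambda$ which additionally avoids that one extra null set discrepancy would lie in $N^c$ yet be random over $L_\lambda$, which is fine --- the contradiction instead comes from the other side: $N^c$ would be an $L_\lambda$-coded measure-one set, so by randomness over $L_\lambda$ \emph{every} real random over $L_\lambda$ lies in $N^c$, meaning $N$ is exactly a Borel set coded in $L_\lambda$ of measure zero, i.e. a \emph{single} decidable null set that captures all non-randoms over $L_\lambda$. But the latter is impossible: one can diagonalize. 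Given any fixed Borel null code $c \in L_\lambda$, the set coded by $c$ is a proper subset of the non-randoms over $L_\lambda$, because by a standard exhaustion (as in the example in Section~\ref{section random forcing} and in Lemma~\ref{classical characterization of Pi11-randoms}) there are non-random reals avoiding the set coded by any single $L_\lambda$-code --- indeed $\{x : x \text{ not random over } L_\lambda\}$ is a countable (in the sense of $L_\lambda$) union $\bigcup_{\alpha<\lambda} A_\alpha$ of $L_\lambda$-coded null sets which is strictly increasing cofinally often, so it is not equal to any single $A_\alpha$ and, being genuinely $\Sigma_1$ over $L_\lambda$ but not $\Delta_1$, is not ITTM-decidable.

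Concretely, the main step I would carry out in detail is: fix a $\Sigma_1$-enumeration $\langle A_\alpha \mid \alpha < \lambda\rangle$ of all null Borel sets with codes in $L_\lambda$; observe $B := \bigcup_{\alpha<\lambda} A_\alpha$ is exactly the complement of the ITTM-decidable randoms by Theorem~\ref{characterization of decidable random}; and show $B$ is not ITTM-decidable by showing its complement is not ITTM-semidecidable. If the set of ITTM-decidable randoms were semidecidable, then by Lemma~\ref{characterization of ITTM-semidecidable} it would be $\Sigma_1$-definable over $L_{\lambda^x}[x]$, uniformly; combined with $B$ being $\Sigma_1$ over $L_\lambda$ this would make ``random over $L_\lambda$'' a $\Delta_1$-over-$L_\lambda$ (in the parameter $x$) property, hence the relation ``$\mu(\bigcup_{\alpha < \gamma} A_\alpha) \geq q$'' would be decided uniformly, letting us run an unbounded search inside $L_\lambda$ over ordinals $\gamma$ with larger and larger measure --- producing a $\Sigma_1$-definable function cofinal in $\lambda$ and contradicting the admissibility of $L_\lambda$ (Theorem~\ref{ITTMcharacteristics}). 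The hard part will be organizing this cofinality/admissibility argument cleanly: one must be careful that the ``measure of $\bigcup_{\alpha<\gamma}A_\alpha$'' is genuinely computed, and that the jump in measure forces the witnessing ordinals to be cofinal; this is where the asymmetry between semidecidable (the non-randoms) and decidable really bites, exactly as in the failure of a universal test being the dual of the existence of one.
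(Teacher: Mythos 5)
Your reduction is fine as far as it goes: a universal test $N$ would be an ITTM-decidable null set whose complement is exactly the set of reals random over $L_\lambda$ (Theorem \ref{characterization of decidable random}), so the lemma amounts to showing that the set of randoms over $L_\lambda$ is not ITTM-semidecidable. But the argument you give for this essential step does not work. First, the cofinality argument rests on a false premise: since every set in $L_\lambda$ is countable in $L_\lambda$ (Theorem \ref{ITTMcharacteristics}), each partial union $\bigcup_{\alpha<\gamma}A_\alpha$ with $\gamma<\lambda$ is a countable union of null sets and hence has measure exactly $0$ (and the full union is null as well), so there are no ordinals $\gamma$ ``with larger and larger measure'', no $\Sigma_1$-definable cofinal function in $\lambda$ arises, and admissibility of $L_\lambda$ is never threatened. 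Second, the intermediate claim that a universal test would have to be a single Borel null set coded in $L_\lambda$ does not follow: the bounded-halting-time argument (as in the proof of Theorem \ref{characterization of decidable random}, via Lemma \ref{application of boolean values}) only shows that the decidable set $N$ agrees with some $L_\lambda$-coded Borel set on the reals that are random over $L_\lambda$, modulo a null set; on the non-randoms the program's behaviour is unconstrained, so your (true but now irrelevant) observation that the set of non-randoms is not equal to any single $A_\alpha$ yields no contradiction. In effect, the assertion that the non-randoms over $L_\lambda$ are semidecidable but not decidable is exactly what has to be proved, and your proposal ends up assuming it.

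The paper closes this gap with an ingredient your plan is missing: ITTM-semidecidable uniformization (via Lemma \ref{characterization of ITTM-semidecidable} and the cited Schlicht--Seyfferth result), which implies that every nonempty semidecidable set has a recognizable element. If $N$ were a universal test, its complement would be semidecidable, hence would contain a recognizable real $x$; but $\{x\}$ is itself an ITTM-decidable null set, so universality forces $x\in N$, a contradiction. Note that this argument does not even need the characterization of decidable randomness by randomness over $L_\lambda$. If you want to keep your framing, the repair is the same: a recognizable real is never ITTM-decidable random (its singleton is a decidable null set), so the set of randoms over $L_\lambda$ is disjoint from the recognizables and therefore cannot be semidecidable --- i.e.\ you need a basis-type theorem producing recognizable elements, not a measure/admissibility diagonalization.
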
 
\begin{proof} 
Suppose that $A$ is a universal \ITTM-decidable random test. 
In particular, the complement of $A$ is \ITTM-semidecidable. 
By the characterization of \ITTM-semidecidable reals in Lemma \ref{characterization of ITTM-semidecidable} and \cite[Seyfferth-Schlicht, Corollary 8]{MR3068304}, \ITTM-semidecidable uniformization holds.\footnote{The proof is a variant of the proof of $\Pi^1_1$-uniformization.} 
Therefore, every semidecidable set, in particular the complement of $A$, has a recognizable element. 
This contradicts the assumption that $A$ is a universal test. 
\end{proof} 

We call a program $P$ \emph{deciding} if $P(x)$ halts for every input $x$. 
The following is a version of van Lambalgen's theorem for \ITTM-decidable. 

\begin{lemma} \label{Lambalgen for decidable random} 
A real $x$ is \ITTM-decidable random and a real $y$ is \ITTM-decidable random relative to $x$ if and only if $x\oplus y$ is \ITTM-decidable random. 
\end{lemma}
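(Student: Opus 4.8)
The plan is to mimic the classical proof of van Lambalgen's theorem, replacing ``$\mathrm{ML}$-random'' with ``\ITTM-decidable random'' and using the characterization of \ITTM-decidable randomness as randomness over $L_\lambda$ from Theorem \ref{characterization of decidable random} together with the facts about random forcing over $L_\lambda$, $L_{\Sigma+1}$ from Section \ref{section random forcing}. First I would prove the easy direction: if $x\oplus y$ is \ITTM-decidable random, then $x$ is \ITTM-decidable random (if $A$ is a decidable null set with $x\in A$, then $A\times{}^\omega2$ is a decidable null set containing $x\oplus y$) and $y$ is \ITTM-decidable random relative to $x$ (if $P$ decides a null set $A$ relative to $x$, then using effective uniformity one builds a deciding program $P'$ on pairs such that $\{u\oplus v\mid P'(u\oplus v)\in A\}$ is null and contains $x\oplus y$; here one must be careful that $P'$ halts on all inputs, which is the analogue of the delicate point in Lemma \ref{hardITTMvanLambalgen}, so I would instead pass through the characterization via randomness over $L_\lambda$, observing that $x\oplus y$ random over $L_\lambda$ forces $y$ to be random over $L_\lambda[x]\supseteq L_{\lambda^x}$, hence \ITTM-decidable random relative to $x$ since $L_{\lambda^x}=L_{\lambda^x}[x]$ decides the relevant Borel sets).

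For the hard direction, suppose $x$ is \ITTM-decidable random and $y$ is \ITTM-decidable random relative to $x$; by Theorem \ref{characterization of decidable random} this means $x$ is random over $L_\lambda$ and $y$ is random over $L_{\lambda^x}$, so in particular $y$ is random over $L_\lambda[x]=L_\lambda^x$ (using $\lambda^x\geq\lambda$ when $x$ is random over $L_\lambda$, which in fact holds with equality in this situation, and Lemma \ref{generic extension is subset of L stage}, Lemma \ref{L stage is subset of generic extension}). I want to conclude that $x\oplus y$ is random over $L_\lambda$, i.e. \ITTM-decidable random. The key is a product/iteration argument: since the two-step iteration of random forcing is equivalent to side-by-side random forcing over $L_\lambda$ (as used in the proof of Theorem \ref{ITTMSacks}, citing \cite[Lemma 3.2.8]{Bartoszynski-Judah}), the fact that $x$ is random over $L_\lambda$ and $y$ is random over the generic extension $L_\lambda[x]$ yields that $\langle x,y\rangle$ is side-by-side random over $L_\lambda$. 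Then by the elementary Lemma at the start of Section \ref{section random forcing}'s ``Side-by-side randoms'' subsection (the one stating side-by-side randomness implies componentwise randomness, suitably applied to the join), $x\oplus y$ is random over $L_\lambda$, hence \ITTM-decidable random.

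The main obstacle I anticipate is making the iteration argument rigorous at the level of $L_\lambda$, which is admissible but weak, and only a \emph{limit} of admissibles after some work; the forcing theorem (Lemma \ref{forcing theorem}) and the product-equivalence lemma need to be available over $L_\lambda$ for the quasi-forcing relation, not just over models of $\ZFC$, and one must check that ``$y$ random over $L_\lambda[x]$'' combined with ``$x$ random over $L_\lambda$'' genuinely produces a pair that avoids every null Borel set coded in $L_\lambda$ (the measure-theoretic Fubini step: a null set coded in $L_\lambda$ has sections that are null for a conull set of first coordinates, and this conull set is itself coded in $L_\lambda$, so $x$ avoids it and then $y$ avoids the section). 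I would carry this out by reducing to the case where $\lambda$ is replaced by a slightly larger $L_\gamma$ satisfying enough set theory, exactly as in the footnote and the paragraph about $L_\gamma$ in the proof of Theorem \ref{ITTMSacks}, so that generics and quasi-generics coincide and the classical product lemma applies verbatim; the randomness hypotheses on $x$ and $y$ over these larger models follow from \ITTM-decidable randomness being preserved, or rather from the observation that one only needs randomness over the countably many Borel codes that actually appear, all of which lie in $L_\lambda$. A secondary obstacle is the relativized easy direction's ``deciding program'' issue, which I sidestep as described above by routing through the $L_\lambda$-characterization rather than constructing $P'$ directly.
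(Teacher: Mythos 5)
The gap is in the direction you call easy, from ``$x\oplus y$ is \ITTM-decidable random'' to ``$y$ is \ITTM-decidable random relative to $x$'', and it is exactly the point the paper's proof is designed to handle. By the relativized version of Theorem \ref{characterization of decidable random}, decidable randomness relative to $x$ corresponds to randomness over $L_{\lambda^x}[x]$, not over $L_\lambda[x]$. Your product/iteration step can at best yield that $y$ is random over $L_\lambda[x]$ (and even that much needs a Borel-reading-of-names argument for the quasi-forcing relation, which the paper never proves); to bridge to $L_{\lambda^x}[x]$ you assert $L_\lambda[x]\supseteq L_{\lambda^x}$, i.e.\ $\lambda^x=\lambda$, for a real $x$ that is only known to be random over $L_\lambda$. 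Nothing supports this: the paper obtains $\lambda^x=\lambda$ only for fully \ITTM-random reals (Theorem \ref{characterization of ITTM-random}, which uses randomness over $L_\Sigma$ and Lemma \ref{Sigma2 reflection}), and by analogy with $\Delta^1_1$-random reals $x$ having $\omega_1^x>\om$ one expects decidable-random reals with $\lambda^x>\lambda$, so this is a genuine missing idea rather than a routine check. The paper instead argues computationally, extending Lemma \ref{hardITTMvanLambalgen}: for a decidable set $A$ of pairs, both $A_{>q}=\{u\mid\mu(A_u)>q\}$ and $A_{\geq q}$ are semidecidable uniformly in $q$ (via $\Sigma_1$-statements over $L_{\lambda^u}$), hence $\{u\mid\mu(A_u)=0\}$ is decidable, which allows one to replace $A$ by a decidable set $B$ with $B_x=A_x$ and all sections null; then $B$ is a decidable null set containing $x\oplus y$. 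Your route has no substitute for this uniformization step.

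Two further remarks. Your fallback of replacing $\lambda$ by a larger $L_\gamma$ ``so that generics and quasi-generics coincide'' is not available here: unlike in Theorem \ref{ITTMSacks}, where the random reals are chosen from a positive-measure set and may be taken as random as one likes, here $x$ and $y$ are given and guaranteed random only over $L_\lambda$ and $L_{\lambda^x}[x]$ respectively (for instance $x$ could be the $<_L$-least real random over $L_\lambda$), so no randomness over any $L_\gamma$ with $\gamma>\lambda$ may be assumed. On the other hand, the direction you call hard is essentially fine when done via your Fubini argument: $x$ random over $L_\lambda$ together with $y$ random over $L_{\lambda^x}[x]\supseteq L_\lambda[x]$ gives that $x\oplus y$ avoids every null Borel set coded in $L_\lambda$, provided one checks by a $\Delta_1$-recursion on Borel codes that $\{u\mid\mu(A_u)>0\}$ has a code in $L_\lambda$; note that this direction uses only the trivial inequality $\lambda^x\geq\lambda$ and does not require your equality claim, so the parenthetical ``which in fact holds with equality'' should simply be dropped.
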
 
\begin{proof} 
Suppose that $x\oplus y$ is \ITTM-decidable random.  
The forward direction is a slight modification of the proof of von Lambalgen's theorem for \ITTM s in Lemma \ref{hardITTMvanLambalgen}, so we omit it. 
In the other direction, the only missing piece is the following claim. 

\begin{claim} 
Suppose that $A$ is a decidable set given and $A_x=\{y\mid x\oplus y \in A\}$ is null. 
Then there is a decidable set $B$ such that $A_x=B_x$ and all sections of $B$ are null. 
\end{claim} 
\begin{proof} 
It was shown in the proof of Lemma \ref{hardITTMvanLambalgen} that the set 
$$A_{>q}=\{u\mid \mu(A_u)>q\}$$ is semidecidable for all rationals $q$, uniformly in $q$, since the statement $u\in A_{>q}$ is $\Sigma_1$ over $L_{\Sigma^u}$. Since $L_{\lambda^u}\prec_{\Sigma_1}L_{\Sigma^u}$, the statement is $\Sigma_1$ over $L_{\lambda^u}$. 
Let 
$$A_{\geq q}=\{u\mid \mu(A_u)\geq q\},$$
Then the statement $u\in A_{\geq q}$ is equivalent to $u\in A_{>r}$ for unboundedly many rationals $r<q$. 
Since $\lambda^u$ is $u$-admissible, this is a $\Sigma_1$-statement in $u$ over $L_{\lambda^u}$. Hence $A_{\geq q}$ is semidecidable, uniformly in $q$. 

Therefore, if $A$ is decidable, then $A_{>q}$ and $A_{\geq q}$ are semidecidable, uniformly in $q$. 
Using the fact that  $A_{=0}=\{u\mid \mu(A_u)=0\}$ is decidable, it is easy to define a decidable set $B$ as in the claim. 
\end{proof} 
This completes the proof of Lemma \ref{Lambalgen for decidable random}.  
\end{proof} 

Lemma \ref{characterization of decidable random} and \ref{Lambalgen for decidable random} immediately imply that $x$ and $y$ are mutually random over $L_\lambda$ if and only if $x$ is random over $L_\lambda$ and $y$ is random over $L_{\lambda^x}$. 

The following variant of Lemma \ref{computable from mutual ITTM-randoms} for reals computable from two mutually randoms can be shown for the following stronger reduction. 
A \emph{safe \ITTM-reduction} of a real $x$ to a real $y$ is a deciding \ITTM\ (i.e. $P$ halts on every input) with $P(x)=y$.
We call reals $x$ and $y$ \emph{mutually \ITTM-decidable random} if $x\oplus y$ is \ITTM-decidable random. 

\begin{lemma} \label{Sacks for decidable random} 
If $x$ is safely \ITTM-reducible both to $y$ and $z$, and $y$ and $z$ are mutually \ITTM-decidable random, then $x$ is \ITTM-computable.
\end{lemma}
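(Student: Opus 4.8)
The plan is to adapt the argument for Theorem~\ref{computable from mutual ITTM-randoms} (and the ``in particular'' clause of Theorem~\ref{ITTMSacks}), replacing semidecidable sets by decidable sets and \ITTM-randomness by \ITTM-decidable randomness. Fix deciding programs $P$ and $Q$ witnessing the two safe reductions, so that $P(y)=x$, $Q(z)=x$, and both $P$ and $Q$ halt on every input, and set
$$A=\{u\mid P(u)=Q(z)\}.$$
First I would verify that $A$ is \ITTM-decidable relative to $z$: on input $u$ with oracle $z$, run $Q$ on $z$ and $P$ on $u$ --- both computations halt, since $P$ and $Q$ are deciding --- and then test whether the two reals now written on the tapes agree in every coordinate. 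The latter is a $\Pi^0_1$ condition, hence \ITTM-decidable, and the whole procedure halts on every input $u$, so $A$ is $z$-decidable. This is precisely the place where the hypothesis of \emph{safe} reducibility is used, in contrast to the mere computability that suffices for Theorem~\ref{computable from mutual ITTM-randoms}; it is the only step that needs any care, and I expect it to be the main (modest) obstacle.

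Next I would distinguish cases according to $\mu(A)$. If $\mu(A)>0$, then for each $u\in A$ we have $P(u)=Q(z)=x$, so $x\leq_w u$, whence $\mu(\{u\mid x\leq_w u\})>0$ and $x$ is writable by Theorem~\ref{ITTMSacks}(1); that is, $x$ is \ITTM-computable, as claimed.

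Finally, if $\mu(A)=0$ I would reach a contradiction. Since $y\oplus z$ is \ITTM-decidable random and swapping the two coordinates is \ITTM-computable, $z\oplus y$ is \ITTM-decidable random as well, so by the van Lambalgen theorem for this notion (Lemma~\ref{Lambalgen for decidable random}) the real $y$ is \ITTM-decidable random relative to $z$. As $A$ is a $z$-decidable null set, this forces $y\notin A$; but $P(y)=x=Q(z)$ gives $y\in A$, a contradiction. Hence $\mu(A)=0$ is impossible, and the proof is complete.
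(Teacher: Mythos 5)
Your proposal is correct and follows essentially the same route as the paper: define $A=\{u\mid P(u)=Q(z)\}$, use safety of the reductions to see that $A$ is \ITTM-decidable relative to $z$, use van Lambalgen for \ITTM-decidable randomness to conclude that $y$ (which lies in $A$) is decidable random relative to $z$ and hence that $A$ is not null, and then apply Theorem \ref{ITTMSacks}. Your extra details (the coordinatewise comparison of outputs and the coordinate swap before invoking Lemma \ref{Lambalgen for decidable random}) are exactly the points the paper leaves implicit.
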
 
\begin{proof} 
Suppose that $P$ is a safe reduction of $x$ to $y$ and $Q$ is a safe reduction of $x$ to $z$. 
Since $P$ is a safe reduction, the set $A=\{u \mid P(u) =Q(z)\}$ is \ITTM-decidable relative to $z$.
As $P(y)=x=Q(z)$, $y\in A$. 
Since $y$ is \ITTM-decidable relative to $z$, $A$ is not null. 
Then $P$ computes $x$ from all elements of a non-null Lebesgue measurable set, and hence $x$ is computable by \ref{ITTMSacks}. 
\end{proof} 

Lemma \ref{Lambalgen for decidable random} can be interpreted as the statement that $x$ and $y$ are mutually random (i.e. $x\oplus y$ is random) over $L_\lambda$ if and only if $x$ is random over $L_\lambda$ and $y$ is random over $L_{\lambda^x}$, by the relativized version of Lemma \ref{characterization of decidable random}. 

Intuitively, a random sequence should not be able to compute any non-computable sequence with special properties, such as recognizable sequences. 
The following result confirms this. 

\begin{lemma}{\label{compfromrecog}}
Any recognizable real $x$ that is computable from an \ITTM-random real $y$ is already computable.
\end{lemma}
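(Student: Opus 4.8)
The plan is to use the quasi-forcing relation over $L_\lambda$ together with the recognizability of $x$ to show that $x$ is writable from a positive-measure set of oracles, and then to quote Theorem \ref{ITTMSacks}. Fix a program $R$ with $R(y)\downarrow = x$ and a program $P$ recognizing $x$, so that $P$ halts on every oracle and $P(z)\downarrow = 1$ if and only if $z=x$. Since $y$ is \ITTM-random, Theorem \ref{characterization of ITTM-random} gives $\lambda^y=\lambda$ and that $y$ is random over $L_\Sigma$, hence over $L_\lambda$; as $\lambda$ is an admissible limit of admissibles (Theorem \ref{ITTMcharacteristics}), the quasi-forcing machinery of Section \ref{section random forcing} is available over $L_\lambda$. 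Because $R(y)$ halts before $\lambda^y=\lambda$ with output $x\in L_{\lambda^y}[y]=L_\lambda[y]$, and $P(x)\downarrow = 1$ halts before $\lambda^x\le\lambda^y=\lambda$, the witnessing computations all lie in $L_\lambda[y]$. Let $\dot y$ be the canonical name for the random real, and let $\varphi(\dot y)$ be the statement asserting the existence of a real $w$, a halting run of $R$ on oracle $\dot y$ with output $w$, and a halting run of $P$ on oracle $w$ with output $1$. Since being a halting run of a machine on a given oracle with a given output is $\Delta_0$, $\varphi$ is $\Sigma_1$ and is a legitimate input to the quasi-forcing relation, exactly as in Lemma \ref{characterization of ITTM-semidecidable}; and $\varphi(\dot y)$ holds in $L_\lambda[y]$ by the previous remarks.

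Since $y$ is random over $L_\lambda$, the truth lemma (Lemma \ref{truth lemma}) yields a condition $q\in L_\lambda$ with $y\in[q]$ and $q\Vdash^{L_\lambda}\varphi(\dot y)$; here randomness of $y$ over $L_\lambda$ suffices, because for a $\Sigma_1$-formula the relevant instance of Lemma \ref{application of boolean values} follows from the $\Delta_0$-case together with $L_\lambda[y]=\{\sigma^y:\sigma\in L_\lambda\}$. Now let $y'\in[q]$ be any real random over $L_\lambda$. By the forcing theorem (Lemma \ref{forcing theorem}), $L_\lambda[y']\vDash\varphi(y')$, so there are $w',C_1,C_2\in L_\lambda[y']$ witnessing $\varphi$; as being a halting run with a given oracle and output is absolute between transitive models containing the relevant objects (upward absoluteness of $\Sigma_1$-statements), $R(y')\downarrow = w'$ and $P(w')\downarrow = 1$ hold genuinely. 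Since $P$ recognizes $x$, this forces $w'=x$, so $R$ witnesses $x\leq_w y'$.

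The set of reals random over $L_\lambda$ is co-null, since its complement is the union of the countably many null Borel sets with codes in the countable set $L_\lambda$. Hence $\{y':x\leq_w y'\}$ contains $\{y'\in[q]:y'\text{ random over }L_\lambda\}$, a set of measure $\mu([q])>0$. By Theorem \ref{ITTMSacks}, $x$ is writable, as desired.

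The step requiring the most care is the setup in the first paragraph: one must check that $\varphi$ genuinely falls under the scope of the quasi-forcing relation and the truth and forcing lemmas (it does, being $\Sigma_1$), and that the computations witnessing $\varphi$ inside $L_\lambda[y']$ really are computations in $V$ — which is where $\lambda^y=\lambda$ from Theorem \ref{characterization of ITTM-random} is used, to guarantee that the original witnessing computations for $y$ lie in $L_\lambda[y]$ in the first place. Everything else is a direct assembly of the lemmas of Sections \ref{section random forcing} and \ref{section ITTMs}.
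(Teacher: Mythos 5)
Your argument is correct, but it takes a noticeably heavier route than the paper's. The paper's proof is a two-step dichotomy that never touches the forcing machinery: with $P$ recognizing $x$ and $Q(y)=x$, the set $A=\{z\mid P^{Q(z)}=1\}$ is \ITTM-semidecidable (recognizability is exactly what lets the machine \emph{verify} that the output of $Q$ is $x$), it contains $y$, and $x$ is computable via $Q$ from every element of $A$; if $\mu(A)>0$ then Theorem \ref{ITTMSacks} gives computability of $x$, and if $\mu(A)=0$ then $y\in A$ contradicts the definition of \ITTM-randomness. You instead invoke Theorem \ref{characterization of ITTM-random} to get $\lambda^y=\lambda$ and randomness over $L_\lambda$, push the halting computations of $R$ and $P$ into $L_\lambda[y]$, and use the truth lemma and forcing theorem for a $\Sigma_1$ statement to produce a condition $q$ such that every sufficiently random $y'\in[q]$ writes $x$, which yields the positive-measure set needed for Theorem \ref{ITTMSacks}. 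Your extra care about the $\Sigma_1$ instances of Lemma \ref{application of boolean values} is justified (and you could have stayed at the $\Delta_0$ level by passing to a witnessing name $\sigma$ with $q\Vdash\psi(\sigma,\dot y)$ before applying the forcing theorem), and your absoluteness step for halting runs is sound since runs are unique and locally verifiable. What your approach buys is an explicit positive-measure set $[q]$ of oracles computing $x$ and independence from the semidecidability observation; what it costs is reliance on the characterization theorem and the quasi-forcing lemmas, whereas the paper's argument needs only the definition of \ITTM-randomness plus Theorem \ref{ITTMSacks}, with the recognizer doing all the work of making the relevant oracle set semidecidable.
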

\begin{proof} 
Suppose that $P$ recognizes $x$ and $Q(y)=x$. Then the set 
$$A=\{z\mid P^{Q(z)}=1\}$$  
is semi-decidable and contains $y$, where $Q(z)$ is the output of the computation $Q$ with input $z$. 
Note that $x$ is computable from every element of $A$ via $Q$. 
If $A$ is not null, then $x$ is computable by Theorem \ref{ITTMSacks}. 
If $A$ is null, this contradicts the assumption that $y$ is \ITTM-random and thus avoids $A$. 
\end{proof}

Hence there are real numbers that are not computable from any \ITTM-random real, and therefore there is no analogue for \ITTM-randoms to the Ku\v cera-G\'acs theorem (see \cite[Theorem 8.3.2]{MR2732288}).

\begin{remark} 
All previous results and proofs 
work relativized to reals and for arbitrary continuous measures instead of the Lebesgue measure. 
\end{remark}

\subsection{Comparison with a Martin-L\"of type variant } \label{subsection ML} 

We finally consider a Martin-L\"of variant of ITTM-randomness. The importance of this notion lies in its characterization via initial segment complexity. This variant is strictly between ITTM-random and $\Pi^1_1$-random. 

We first describe analogues of the theorems of van Lambalgen and Levin--Schnorr for $\mathrm{ITTM}_{\mathrm{ML}}$-random reals. 
Since these results are minor modifications of the results in \cite{MR2340241} and \cite{Continuous-higher-randomness}, we refer the reader to \cite[Section 3]{MR2340241} and \cite[Section 1.1, Section 3]{Continuous-higher-randomness} for discussions and proofs, and will only point out the differences to our setting. 

Towards proving van Lambalgen's theorem for $\mathrm{ITTM}_{\mathrm{ML}}$-random reals, we define a continuous relativization as in \cite[Section 1.1]{Continuous-higher-randomness}. 
If $\Psi\subseteq {}^\omega2\times {}^\omega2$ and $x\in {}^\omega2$, let 
$$\Psi^{(x)}=\{n\mid (\sigma,m)\in \Psi\text{ for some }\sigma\preceq x\}.$$ 
A subset $A$ of ${}^\omega 2$ is called $\mathrm{ITTM}_{\mathrm{ML}}^{(x)}$ if $A=\Psi^{(x)}$ for some $\mathrm{ITTM}$-semidecidable set $\Psi$. 

\begin{lemma} 
A real $x\oplus y$ is \ML-random if and only if $x$ is \ML-random and $y$ is $\mathrm{ITTM}^{(x)}_{\mathrm{ML}}$-random. 
\end{lemma}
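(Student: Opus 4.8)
The plan is to follow the template of van Lambalgen's theorem for $\Pi^1_1$-ML-randomness from \cite[Section 3]{Continuous-higher-randomness}, adapting it to the ITTM setting where the role of $\omega_1^{\mathrm{ck}}$ (and $L_{\omega_1^{\mathrm{ck}}}$) is played by $\lambda$ (and $L_\lambda$), and where "$\Pi^1_1$" is replaced throughout by "ITTM-semidecidable". The key technical input is that, by Lemma \ref{characterization of ITTM-semidecidable}, ITTM-semidecidable sets are exactly those of the form $\{x : L_{\lambda^x}[x]\models\varphi(x)\}$ for a $\Sigma_1$-formula $\varphi$, and that the forcing relation for random forcing over the relevant admissible levels is ITTM-decidable (Lemma \ref{application of boolean values} and the lemma on $\Delta_1$-definability). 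One first needs to check that the continuous relativization $\Psi^{(x)}$ and the associated notion $\mathrm{ITTM}^{(x)}_{\mathrm{ML}}$-random behave well, in particular that there is a universal $\mathrm{ITTM}^{(x)}_{\mathrm{ML}}$-test uniformly in $x$ — this is the analogue of \cite[Section 3]{Continuous-higher-randomness} and should go through verbatim, using Lemma \ref{assign null covers} in the relativized form to produce a universal test.

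For the forward direction, suppose $x\oplus y$ is $\mathrm{ITTM}_{\mathrm{ML}}$-random. That $x$ is $\mathrm{ITTM}_{\mathrm{ML}}$-random is immediate: any $\mathrm{ITTM}_{\mathrm{ML}}$-test covering $x$ lifts to one covering $x\oplus y$ by taking cylinders (as in Lemma \ref{hardITTMvanLambalgen}). For $y$ being $\mathrm{ITTM}^{(x)}_{\mathrm{ML}}$-random: given an $\mathrm{ITTM}^{(x)}_{\mathrm{ML}}$-test $\langle V_n\rangle$ with $y\in\bigcap_n V_n$, one builds from it, using the measure-of-sections trick from the proof of Lemma \ref{hardITTMvanLambalgen}, a $\mathrm{ITTM}_{\mathrm{ML}}$-test on the product: roughly, put into the $n$-th component of the product test all $(\sigma,\tau)$ such that $\sigma$ forces (over $L_{\Sigma^{\sigma}}$, via the ITTM-decidable forcing relation, with $\dot z$ the name for the random real) that the $\sigma$-section of the relativized test still has large measure around $\tau$; the point is that $x$ being $\mathrm{ITTM}_{\mathrm{ML}}$-random forces $\mu((V_n)_x)\le 2^{-n}$, and then $y\in(V_n)_x$ contradicts $y$'s relativized randomness — wait, it is the other way: one directly builds a product $\mathrm{ITTM}_{\mathrm{ML}}$-test containing $x\oplus y$, contradicting the hypothesis. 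The complexity bookkeeping (that the resulting object is genuinely an $\mathrm{ITTM}$-semidecidable family with the right measure bounds) is handled exactly as in \cite[Lemma 3.?]{Continuous-higher-randomness}, using that $\lambda^{x\oplus y}=\lambda^x=\lambda$ for random $y$ by Theorem \ref{Sigma2 reflection}.

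For the reverse direction, assume $x$ is $\mathrm{ITTM}_{\mathrm{ML}}$-random and $y$ is $\mathrm{ITTM}^{(x)}_{\mathrm{ML}}$-random, and suppose toward a contradiction that $\langle W_n\rangle$ is an $\mathrm{ITTM}_{\mathrm{ML}}$-test with $x\oplus y\in\bigcap_n W_n$. Consider the sections $A_n^{>q}=\{u:\mu((W_n)_u)>q\}$. As in Lemma \ref{hardITTMvanLambalgen}, these are ITTM-semidecidable uniformly (the statement "$u\in A_n^{>q}$" is $\Sigma_1$ over $L_{\Sigma^u}$, hence over $L_{\lambda^u}$), and from them one assembles an $\mathrm{ITTM}_{\mathrm{ML}}$-test; since $x$ is $\mathrm{ITTM}_{\mathrm{ML}}$-random, $x$ escapes it, which forces $\mu((W_n)_x)\to 0$ at the right rate, so $\langle (W_n)_x\rangle$ is (up to passing to a subsequence) an $\mathrm{ITTM}^{(x)}_{\mathrm{ML}}$-test — it is computed from $x$ via the forcing relation, using $\lambda^x=\lambda$ — and it captures $y$, contradicting $\mathrm{ITTM}^{(x)}_{\mathrm{ML}}$-randomness of $y$. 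The main obstacle, and the place where care is genuinely needed, is verifying the exact complexity and measure estimates in the section argument — namely that the families $\langle A_n^{>q}\rangle$ and the derived tests meet the quantitative requirements of a Martin-L\"of test in the ITTM sense, uniformly, which requires the ITTM-decidability of the random forcing relation over $L_{\lambda^u}$ and the preservation $\lambda^{u\oplus v}=\lambda^u$ from Theorem \ref{Sigma2 reflection} for random $v$. Given that these are already established in the excerpt, the proof reduces to transcribing the arguments of \cite[Section 3]{MR2340241} and \cite[Section 3]{Continuous-higher-randomness} with $\lambda$ in place of $\omega_1^{\mathrm{ck}}$, which is why the statement is presented as a minor modification.
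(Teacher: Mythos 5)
Your proposal takes essentially the same route as the paper: both reduce the lemma to the van Lambalgen argument of \cite[Section 3]{Continuous-higher-randomness}, with $\omega_1^{\mathrm{ck}}$ replaced by $\lambda$ and $\Pi^1_1$ replaced by ITTM-semidecidable, and with the section-measure sets $\{u\mid \mu((W_n)_u)>q\}$ shown to be uniformly semidecidable via the $\Delta_1$-definable quasi-forcing relation and the preservation of $\lambda^{u\oplus v}=\lambda^u$ for sufficiently random $v$, exactly as in the proof of Lemma \ref{hardITTMvanLambalgen}. The one point you do not address is the ingredient the paper singles out as the actual difference in the transfer: the proof in \cite{Continuous-higher-randomness} uses the projectum of $\omega_1^{\mathrm{ck}}$, i.e.\ a $\Sigma_1$-definable injection into $\omega$, and your appeal to ``the complexity bookkeeping is handled exactly as in \cite{Continuous-higher-randomness}'' silently assumes the analogous fact for $\lambda$. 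This is not a fatal gap, since such a projectum function does exist: the map sending $\alpha<\lambda$ to the least index of a program that writes a code for $\alpha$ is injective and has $\Sigma_1$-definable graph over $L_\lambda$, and with this in hand the bookkeeping of \cite{Continuous-higher-randomness} transfers as you describe; but as written your proposal never verifies the property of $\lambda$ on which the whole transcription rests, and that verification is precisely what the paper's proof consists of.
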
 

The difference to the proof in \cite[Section 3]{Continuous-higher-randomness} is that $\om$ is replaced with $\lambda$ and the projectum function on $\om$ is replaced with a projectum function on $\lambda$, i.e. an injective function  $p\colon \lambda\rightarrow \omega$ such that its graph is $\Sigma_1$-definable over $L_\lambda$. 
For instance, consider the function $p$ which maps an ordinal $\alpha<\lambda$ to the least 
program that writes a code for $\alpha$. 

The proof of the Levin-Schnorr theorem in \cite[Theorem 3.9]{MR2340241} easily adapts 
to our setting as follows, by replacing $\om$ with $\lambda$ and $\Pi^1_1$-random with \ML-random. 
We will also call an \ITTM\  simply a \emph{machine}. 

\begin{lemma} 
There is an effective list $\langle M_d\mid d\in \omega\setminus \{0\}\rangle$ of all prefix-free ITTMs. 
\end{lemma}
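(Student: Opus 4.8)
The plan is to adapt the classical enumeration of prefix-free Turing machines (see the treatments in \cite{MR2732288, MR2548883}): since one cannot decide whether a given \ITTM\ has prefix-free halting domain, one instead starts from an effective enumeration $\langle P_d\mid d\in\omega\setminus\{0\}\rangle$ of \emph{all} \ITTM s reading a finite binary input, and uniformly \emph{trims} each $P_d$ to an \ITTM\ $M_d$ whose halting domain $\mathrm{dom}(M_d)=\{\sigma\in 2^{<\omega}\mid M_d(\sigma)\da\}$ is an antichain in $(2^{<\omega},\subseteq)$, arranging moreover that $M_d=P_d$ whenever $\mathrm{dom}(P_d)$ is already prefix-free. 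Since every prefix-free \ITTM\ occurs among the $P_d$, the sequence $\langle M_d\rangle$ will then list exactly the prefix-free \ITTM s, and since the trimming is carried out by a single \ITTM\ uniformly in $d$ and the input, the list is effective.

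First I would set up the trimming. Running $P_d$ on all finite binary inputs in parallel and listing an input once its computation halts yields, uniformly in $d$, an $\omega$-indexed enumeration $\langle\sigma^d_i\mid i\in\omega\rangle$ of $\mathrm{dom}(P_d)$ (with repetitions allowed, and a finite or empty domain handled in the usual way); that this parallel simulation is legitimate is exactly the enumerability of \ITTM-semidecidable subsets of $2^{<\omega}$ from Lemma \ref{characterization of ITTM-semidecidable} and the remark after it. The machine $M_d$ on input $\sigma$ runs this enumeration while recording whether $\sigma$ has appeared and, if so, the least $j$ with $\sigma^d_j=\sigma$ together with the finitely many earlier strings $\sigma^d_i$, $i<j$: if $\sigma$ never appears, $M_d(\sigma)$ diverges; if some $\sigma^d_i$ with $i<j$ is $\subseteq$-comparable with $\sigma$, then $M_d$ enters a non-halting loop; otherwise it runs $P_d(\sigma)$ (which halts, as $\sigma\in\mathrm{dom}(P_d)$) and halts with the same output. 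All of this --- the domain-enumeration, the universal \ITTM\ on $(P_d,\sigma)$, and the finite comparability search --- runs on one \ITTM\ uniformly in $d$ and $\sigma$.

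Then I would check three things. (i) $\mathrm{dom}(M_d)$ is prefix-free: if $\sigma\subsetneq\tau$ both lay in $\mathrm{dom}(M_d)$, whichever of $\sigma,\tau$ first appears later in the enumeration is $\subseteq$-comparable with a string appearing strictly earlier, hence gets rejected into the loop, a contradiction. (ii) If $\mathrm{dom}(P_d)$ is already prefix-free then no two enumerated strings are comparable, so every $\sigma\in\mathrm{dom}(P_d)$ passes the test with $M_d(\sigma)=P_d(\sigma)$, while $\sigma\notin\mathrm{dom}(P_d)$ never appears, so $M_d(\sigma)\uparrow$; hence $M_d=P_d$. (iii) Combining (i) and (ii), $\{M_d\mid d\in\omega\setminus\{0\}\}$ is precisely the class of prefix-free \ITTM s, and $(d,\sigma)\mapsto M_d(\sigma)$ is computed by a single \ITTM, so the list is effective. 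The construction relativizes verbatim to an oracle, which we will want for the relativized form of the Levin--Schnorr theorem.

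The one point that needs care is the infinite-time bookkeeping: ``the first appearance of $\sigma$'' and ``the strings enumerated before it'' must refer to a genuine $\omega$-indexed enumeration of $\mathrm{dom}(P_d)$, not to the ordinal-length stages of a bare semidecision procedure, since otherwise ``before'' is ill-defined and the priority argument in (i) fails. Supplying such an enumeration uniformly in $d$ is precisely what the enumerability of \ITTM-semidecidable subsets of $2^{<\omega}$ provides; once it is in place, each step of the trimming is a routine finite check, so nothing deeper than setting up this parallel simulation is required.
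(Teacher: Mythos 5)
Your proposal is correct and follows essentially the same route as the paper, whose one-line proof ("effectively replace each machine $P$ by a prefix-free machine $\hat{P}$ by simulating $P$ on all inputs") is exactly the uniform trimming you spell out. Your additional care about using a genuine $\omega$-indexed enumeration of the halting domain (rather than ordinal stages) and about preserving machines that are already prefix-free is a sound and welcome elaboration of the same idea.
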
 
\begin{proof} 
We can effectively replace each machine  $P$ by a prefix-free machine  $\hat{P}$, by simulating $P$ on all inputs with increasing length. 
\end{proof} 

Given such a list, we obtain a universal prefix-free machine $U$ by defining $U(0^{d-1}1\sigma)=M_d(\sigma)$. 
We identify $U$ with an semidecidable subset of $2^{<\omega}\times 2^{<\omega}$. The ITTM-version of Solomonoff-Kolmogorov complexity is defined as 
$$K(x)=K_U(x)= \min \{|\sigma|\mid U(\sigma)=x\}.$$ 

\begin{definition} 
Suppose that $D$ is a prefix-free machine. 
The probability that $D$ outputs a string $x$ is 
$P_D(x)=\lambda(\{\sigma\mid D(\sigma)=x\})$. 
\end{definition} 

By the definition of $K$, $2^{-K(x)}\leq P_U(x)$. 
As in \cite[Theorem 3.4]{MR2340241} we have the following result. 

\begin{thm} (Coding theorem) 
For each prefix-free machine $D$, there is a $\Sigma_1$-definable function $g\colon \lambda\rightarrow \lambda$ over $L_\lambda$ and a constant $c$ such that 
$$ \forall x\ 2^c 2^{-K(x)}\geq P_D(x).$$ 
\end{thm}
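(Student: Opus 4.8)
The plan is to follow the proof of the $\Pi^1_1$-coding theorem in \cite[Theorem 3.4]{MR2340241}, which transcribes the classical argument, and systematically replace $\om$ by $\lambda$. The paragraph preceding the statement already signals that this substitution should be harmless: $L_\lambda$ is an admissible limit of admissibles by Theorem \ref{ITTMcharacteristics} and it carries a $\Sigma_1$-definable projectum $p\colon\lambda\to\omega$ (for instance the function sending $\alpha$ to the least program that writes a code for $\alpha$), and these are the only structural features of $\om$ that the original proof uses.

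First I would isolate two facts about $P_D$. Since $D$ is prefix-free, $\mathrm{dom}(D)$ is an antichain of $2^{<\omega}$, so the basic clopen sets $U_\sigma$ for $\sigma\in\mathrm{dom}(D)$ are pairwise disjoint and $\sum_{x\in 2^{<\omega}}P_D(x)=\mu\bigl(\bigcup\{U_\sigma\mid\sigma\in\mathrm{dom}(D)\}\bigr)\le 1$; thus $x\mapsto P_D(x)$ is a discrete semimeasure. It is also \emph{lower semicomputable over $L_\lambda$}: by Lemma \ref{characterization of ITTM-semidecidable} and the equivalence noted after it, $\mathrm{dom}(D)$ is $\Sigma_1$ over $L_\lambda$, and the measure of a finite union of basic clopen sets is computable, so $\{(d,x)\mid P_D(x)\ge 2^{-d}\}$ is $\Sigma_1$ over $L_\lambda$. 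It therefore suffices to prove the more general statement: if $m$ is a discrete semimeasure on $2^{<\omega}$ such that $\{(x,q)\mid m(x)\ge q\}$ is $\Sigma_1$ over $L_\lambda$, then $m(x)\le 2^{c}\,2^{-K(x)}$ for some constant $c$ and all $x$; the theorem then follows by taking $m=P_D$.

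To prove this, I would carry out a Kraft--Chaitin-style construction over $\lambda$ stages. Enumerating the $\Sigma_1$-over-$L_\lambda$ set of requests $R=\{(d+1,x)\mid m(x)\ge 2^{-d}\}$, one checks that $\sum_{(e,x)\in R}2^{-e}\le\sum_{x}m(x)\le 1$, the shift by one in the requested length ensuring that the requests concerning a fixed $x$ have total weight at most $m(x)$. Feeding $R$ into the \ITTM\ version of the Kraft--Chaitin theorem underlying \cite[Theorem 3.4]{MR2340241} (again with $\lambda$ replacing $\om$) produces a prefix-free machine $M$ in which each request $(e,x)\in R$ is granted by some $\tau$ with $|\tau|=e$ and $M(\tau)=x$; hence $K_M(x)\le -\log_2 m(x)+O(1)$. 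This is the step where the $\Sigma_1$-definable function $g\colon\lambda\to\lambda$ of the statement appears: when $M$ reaches a limit stage of its own run it must recover which initial segments of the allocation of codes have been used and which requests are still pending, and $g$ may be taken to bound the ordinal stage by which this data stabilizes (equivalently, to bound the halting time of $M$ on a request that has appeared by a given stage). Its $\Sigma_1$-definability over $L_\lambda$ follows from the admissibility of $L_\lambda$ and the $\Sigma_1$-definability of $\mathrm{dom}(D)$ and of the approximations to $m(x)=P_D(x)$. Finally, $M=M_e$ for some $e$, so the universal prefix-free machine satisfies $U(0^{e-1}1\sigma)=M(\sigma)$ and hence $K(x)=K_U(x)\le K_M(x)+e$; combining this with $K_M(x)\le -\log_2 m(x)+O(1)$ yields the desired constant $c$.

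I expect the main obstacle to be the verification that the Kraft--Chaitin construction behaves correctly over $\lambda$-length computations rather than $\omega$-length ones: one must check that at limit stages of $M$'s run no pending request is lost and no two requests are granted overlapping codes, and, for the statement as phrased, that the bookkeeping function $g$ is $\Sigma_1$ over $L_\lambda$ itself and not merely over some larger admissible level. Since $\lambda$ is an admissible limit of admissibles with a $\Sigma_1$ projectum to $\omega$, precisely the recursion-theoretic facts used in the $\om$-version of \cite[Theorem 3.4]{MR2340241} are available, so the adaptation should be routine once these points are checked.
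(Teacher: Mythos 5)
Your proposal is correct and follows essentially the same route as the paper, which proves this result simply by adapting the proof of \cite[Theorem 3.4]{MR2340241} with $\om$ replaced by $\lambda$, exactly as you do: viewing $P_D$ as a discrete semimeasure whose approximations are $\Sigma_1$ over $L_\lambda$ and running the Kraft--Chaitin construction along a $\Sigma_1$ enumeration of requests, using admissibility of $L_\lambda$ and the $\Sigma_1$ projectum to $\omega$. The points you flag (limit-stage bookkeeping, $\Sigma_1$-definability of the bound $g$ over $L_\lambda$ itself) are indeed the only things to check, and they go through as you indicate.
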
 

This implies the following analogue to the Levin-Schnorr theorem to characterize randomness via incompressibility, as in \cite[Theorem 3.9]{MR2340241}. 

\begin{thm} 
The following properties are equivalent for infinite strings $x$. 
\begin{enumerate-(a)} 
\item 
$x$ is \ML-random. 
\item 
$\exists b\ \forall n\ K(x\upharpoonright n)>n-b$. 
\end{enumerate-(a)} 
\end{thm}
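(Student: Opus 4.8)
The plan is to mimic the classical Levin--Schnorr argument (as in \cite[Theorem 3.9]{MR2340241}), with $\om$ replaced by $\lambda$ throughout, using the Coding Theorem just stated together with the ITTM-version of the universal test furnished by Lemma \ref{assign null covers}.

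\textbf{Direction (b) $\Rightarrow$ (a).} Suppose $x$ is not \ML-random, so $x$ lies in a $\mathrm{ITTM_{ML}}$-test, i.e.\ for every $k$ there is an \ITTM-enumerable prefix-free set $W_k\subseteq 2^{<\omega}$ with $x\in \bigcup_{\sigma\in W_k}U_\sigma$ and $\mu(\bigcup_{\sigma\in W_k}U_\sigma)\le 2^{-k}$, uniformly in $k$ (using the universal test structure coming from Lemma \ref{assign null covers}). First I would turn this into a bounded machine: define a prefix-free machine $D$ which, on input a string of the form $\tau$ coding a pair $(k,j)$ followed by enough bits, outputs the $j$-th string enumerated into $W_{2k}$, arranging the domain so that $P_D(\rho)\ge 2^{-k}\cdot\mu(U_\rho)^{-1}\cdot\ldots$; more precisely, allot to each $\sigma\in W_{2k}$ a clopen set of measure $2^{-k}\mu(U_\sigma)$ of ``codes'' mapping to $\sigma$, which is possible because $\sum_{\sigma\in W_{2k}}2^{-k}\mu(U_\sigma)\le 2^{-2k}\le 1$ and the allotment can be carried out $\Sigma_1$-definably over $L_\lambda$ since $W_{2k}$ is $\Sigma_1$ over $L_\lambda$ by Lemma \ref{characterization of ITTM-semidecidable}. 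Then $P_D(\sigma)\ge 2^{-k}\mu(U_\sigma)$ for $\sigma\in W_{2k}$, so by the Coding Theorem $2^c2^{-K(\sigma)}\ge P_D(\sigma)\ge 2^{-k}\mu(U_\sigma)$, giving $K(\sigma)\le |\sigma|+k-c$ for every $\sigma\in W_{2k}$. Applying this to $\sigma=x\upharpoonright n_k$ for a string $\sigma\in W_{2k}$ with $x\in U_\sigma$ (so $|\sigma|=n_k$), we get $K(x\upharpoonright n_k)\le n_k-(c-k)$, and since $k$ was arbitrary this refutes (b).

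\textbf{Direction (a) $\Rightarrow$ (b).} Suppose (b) fails, i.e.\ for every $b$ there is $n$ with $K(x\upharpoonright n)\le n-b$. For each $b$ let
$$C_b=\bigcup\{U_\sigma\mid \sigma\in 2^{<\omega},\ K(\sigma)\le |\sigma|-b\}.$$
Each $C_b$ is \ITTM-semidecidable: the relation $K(\sigma)\le |\sigma|-b$ is $\Sigma_1$ over $L_\lambda$ because it says there is $\rho$ with $|\rho|\le|\sigma|-b$ and $U(\rho)=\sigma$, and the halting of the universal prefix-free machine $U$ is $\Sigma_1$ over $L_\lambda$ by Lemma \ref{characterization of ITTM-semidecidable}. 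Next I would bound the measure: since $U$ is prefix-free, $\sum_\rho 2^{-|\rho|}\le 1$, hence for any antichain of minimal witnesses $\sum_{\sigma:\,K(\sigma)\le|\sigma|-b}2^{-|\sigma|}\le\sum_{\sigma}2^{b}2^{-K(\sigma)-b}\le 2^{b}\sum_\rho 2^{-|\rho|}\cdot 2^{-b}$; the clean statement is $\mu(C_b)\le 2^{-b}\sum_\rho 2^{-|\rho|}\le 2^{-b}$, by grouping the $\rho$'s by their output. So $\langle C_b\mid b\in\omega\rangle$ is (essentially, after thinning each $C_b$ to a prefix-free covering and reindexing) a $\mathrm{ITTM_{ML}}$-test, and by assumption $x\in C_b$ for all $b$. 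Hence $x$ is not \ML-random, contradicting (a).

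\textbf{Main obstacle.} The delicate point is carrying out the two constructions \emph{$\Sigma_1$-definably over $L_\lambda$} and with the right measure bookkeeping: in the classical proof one freely enumerates, sums, and reallocates measure, but here one must check that the measure of a $\Sigma_1$-definable union of basic clopen sets is computed by a $\Delta_1$-recursion over $L_\lambda$ (which it is, as noted in the excerpt) and that the allotment of ``codes'' to strings in the machine $D$ is itself $\Sigma_1$ over $L_\lambda$, so that $D$ is genuinely a prefix-free \ITTM\ eligible for the Coding Theorem. The other subtlety is matching the exact formulation of the $\mathrm{ITTM_{ML}}$-test (prefix-free open covers of measure $\le 2^{-k}$, uniformly \ITTM-enumerable) used in \cite{Continuous-higher-randomness} with the semidecidable sets $C_b$; this is handled exactly as in \cite[Theorem 3.9]{MR2340241} by passing to the canonical prefix-free ``first-covering'' antichain inside each $C_b$, which is again $\Sigma_1$ over $L_\lambda$.
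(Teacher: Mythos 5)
Your overall route is the paper's own: the paper obtains this theorem by adapting the Levin--Schnorr argument of \cite[Theorem 3.9]{MR2340241}, replacing $\om$ by $\lambda$ and invoking the ITTM coding theorem, and your direction (a)$\Rightarrow$(b) is exactly that adaptation and is correct (the sets $C_b$ are uniformly semidecidable since $K(\sigma)\le|\sigma|-b$ is $\Sigma_1$ over $L_\lambda$, and the clean bound $\mu(C_b)\le 2^{-b}\sum_\rho 2^{-|\rho|}\le 2^{-b}$ is the right one, even though your intermediate chain of inequalities is garbled). One small point: Lemma \ref{assign null covers} is not needed here; it concerns semidecidable null sets for \ITTM-randomness, whereas an \ML-test is given directly as a uniformly enumerable sequence of open sets with measure bounds.

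Direction (b)$\Rightarrow$(a), however, fails as written because your weight allocation is inverted. You allot to each $\sigma\in W_{2k}$ codes of total measure $2^{-k}\mu(U_\sigma)=2^{-(|\sigma|+k)}$, i.e.\ descriptions of length $|\sigma|+k$; the coding theorem then only yields $K(\sigma)\le|\sigma|+k+O(1)$, and your conclusion $K(x\upharpoonright n_k)\le n_k+k-c$ gets \emph{weaker} as $k$ grows (it never dips below $n_k-c$), so it does not refute (b) --- indeed $K(\sigma)\le|\sigma|+O(1)$ holds for every string. The allocation must go the other way: give each $\sigma\in W_{2k}$ a block of codes of measure $2^{k}\mu(U_\sigma)$, i.e.\ descriptions of length $|\sigma|-k$. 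The Kraft sum is then $\sum_k 2^{k}\sum_{\sigma\in W_{2k}}2^{-|\sigma|}\le\sum_k 2^{k}\cdot 2^{-2k}\le 2$, which one pushes below $1$ by reindexing (e.g.\ using $W_{2k+2}$), and the coding theorem gives $K(\sigma)\le|\sigma|-k+c$ for all $\sigma\in W_{2k}$; since $x\in\bigcup_{\sigma\in W_{2k}}U_\sigma$ for every $k$, taking $\sigma=x\upharpoonright n_k$ refutes (b). With this correction, and with the $\Sigma_1$-definability over $L_\lambda$ of the allocation that you already discuss, your proof coincides with the paper's intended argument.
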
 

The difference to the proof of \cite[Theorem 3.9]{MR2340241} is that $\om$ is replaced with $\lambda$ and the coding theorem for the \ITTM-variant of $K$ is used. 

We now compare the introduced randomness notions with $\Pi^1_1$-randomness. 
There is an \ITTM-writable $\Pi^1_1$-random real, for example, let $x$ be the $<_L$-least real that is random over $L_{\om+1}$. 
Since $L_\lambda$ is admissible and $\om$ is countable in $L_\lambda$, $x\in L_\lambda$. 
Then $x$ is $\Pi^1_1$-random by Lemma \ref{preservation of admissibility by randoms} and Lemma \ref{classical characterization of Pi11-randoms}, 
and all reals in $L_\lambda$ are ITTM-computable. 

For the next result, recall that a real $r\in \RR$ is called \emph{left-$\Pi^1_1$} if the set $\{q\in \mathbb{Q}\mid q\leq r\}$ is $\Pi^1_1$. 
The following is a folklore result and we give a short proof for the benefit of the reader. 

\begin{lemma} \label{measure of Pi11 sets} (Tanaka, see \cite[Section 2.2 page 15]{MR0369072}) 
The measure of $\Pi^1_1$ sets is uniformly left-$\Pi^1_1$. 
\end{lemma}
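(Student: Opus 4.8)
The plan is to show that the function $r \mapsto \mu(A_r)$ is left-$\Pi^1_1$, uniformly in a $\Pi^1_1$ code for a sequence $\langle A_r \rangle$ of $\Pi^1_1$ sets, by expressing the inequality $\mu(A) > q$ as a $\Pi^1_1$ statement about the code. First I would recall the Gandy--Spector representation: a $\Pi^1_1$ set $A$ can be written as $\{x \mid T_x \text{ is wellfounded}\}$ for a recursive assignment $x \mapsto T_x$ of trees, or equivalently $A = \bigcup_{\alpha < \om} A_\alpha$ where $A_\alpha$ is the Borel ``rank $< \alpha$'' approximation, and each $A_\alpha$ is Borel with a Borel code in $L_{\om}$ (uniformly, with the code $\Sigma_1$-definable over $L_{\om}$ from $\alpha$). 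The key monotonicity is $A_\alpha \subseteq A_\beta$ for $\alpha \le \beta$, and $\mu(A) = \sup_{\alpha < \om} \mu(A_\alpha) = \lim_{\alpha \to \om} \mu(A_\alpha)$.

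Next I would use that the measure of a Borel set given by a Borel code is computable by a $\Delta_1$-recursion over any admissible set containing the code (as noted several times in the paper, e.g. after Lemma~\ref{application of boolean values} and in the proof of Lemma~\ref{assign null covers}); hence the map $\alpha \mapsto \mu(A_\alpha)$ is (uniformly) $\Sigma_1$-definable over $L_{\om}$, with values in the reals coded in $L_{\om}$. Therefore, for a rational $q$,
\[
\mu(A) > q \iff \exists \alpha < \om\ \mu(A_\alpha) > q,
\]
and the right-hand side is a $\Sigma_1$ statement over $L_{\om}$, hence $\Sigma_1^1$, but what we want is that $\{q \in \Q \mid q \le \mu(A)\}$ is $\Pi^1_1$, i.e. $\{q \mid q > \mu(A)\}$ is $\Sigma^1_1$. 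So I would instead observe $q > \mu(A) \iff \forall \alpha < \om\ \mu(A_\alpha) \le q$, which a priori is $\Pi$ over $L_{\om}$; to get it to be $\Sigma^1_1$ one uses the standard trick that $\mu(A) = \inf\{\mu(U) \mid U \supseteq A \text{ open}\}$ together with the fact that $A$ is $\Pi^1_1$ so its complement is $\Sigma^1_1$, and a $\Sigma^1_1$ set is a union over $L_{\om}$ of Borel pieces; alternatively, $q > \mu(A) \iff \exists$ a $\Sigma^1_1$-code for an open $U \supseteq A$ with $\mu(U) < q$, which is $\Sigma^1_1$ since the containment $A \subseteq U$ is $\Pi^1_1$ but ``$U \supseteq A$'' for $U$ open and $A$ the wellfounded-tree set is equivalent to ``every branch through $T_x$ that survives to stage $\om$ lies in $U$'', expressible appropriately. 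The cleanest route: code $A$ via Gandy--Spector as a $\Sigma_1$ union $\bigcup_{\alpha<\om}A_\alpha$ of $L_{\om}$-Borel sets, note $\mu(A_\alpha)$ is an increasing $\Sigma_1$-definable sequence of reals, and use that an increasing $\Sigma_1$ sequence of rationals-below has supremum $s$ with $\{q \mid q < s\}$ being $\Sigma_1$ over $L_{\om}$ (hence $\Sigma^1_1$) and $\{q \mid q \ge s\}$ being $\Pi_1$ over $L_{\om}$ (hence $\Pi^1_1$); applying this with $s = \mu(A)$ gives that $\{q \mid q \le \mu(A)\}$ is $\Pi^1_1$, i.e.\ $\mu(A)$ is left-$\Pi^1_1$, and tracing the definitions shows this is uniform in the $\Pi^1_1$ code.

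The main obstacle is getting the quantifier complexity on the correct side: the naive expression of $\mu(A) \ge q$ as $\sup_\alpha \mu(A_\alpha) \ge q$, i.e. $\forall \varepsilon\, \exists \alpha\, \mu(A_\alpha) > q - \varepsilon$, is $\Pi_2$ over $L_{\om}$ a priori, not obviously $\Pi^1_1$. The fix is to exploit that $\om$ is admissible so that the $\Sigma_1$-definable sequence $\langle \mu(A_\alpha) \mid \alpha < \om\rangle$ is tame: for each rational $\varepsilon$, the least $\alpha$ with $\mu(A_\alpha) > \mu(A) - \varepsilon$ — or rather, since we can't refer to $\mu(A)$ yet, one argues that $\mu(A) \ge q$ iff for every rational $q' < q$ there is $\alpha < \om$ with $\mu(A_\alpha) > q'$, and the map $q' \mapsto$ (least such $\alpha$, if it exists) is $\Sigma_1$, so by $\Sigma_1$-collection in $L_{\om}$ its range over the (countably many) $q' < q$ witnessing is bounded below $\om$ exactly when $\mu(A) \ge q$; turning this around, $\mu(A) < q$ iff there is a single $\gamma < \om$ and a rational $q' < q$ with $\mu(A_\alpha) < q'$ for all $\alpha$ — no, with $\mu(A_\alpha) \le q'$ for all $\alpha < \om$, witnessed boundedly, which is $\Sigma_1$ over $L_{\om}$ and hence $\Sigma^1_1$. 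I would write this out carefully, citing Lemma~\ref{preservation of admissibility by randoms} / the admissibility of $L_{\om}$ and the $\Delta_1$-computability of Borel measure, and noting the Gandy--Spector theorem \cite[Theorem 5.5]{Hjorth-Vienna-notes-on-descriptive-set-theory} for the representation; the uniformity statement then falls out because every step is uniform in the given $\Pi^1_1$ index.
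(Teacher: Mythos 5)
Your overall strategy is the paper's: approximate the $\Pi^1_1$ set by the increasing $\om$-sequence of Borel sets coming from Gandy--Spector, compute their measures by $\Delta_1$-recursion, and use admissibility of $L_{\om}$ to put the cut of $\mu(A)$ into the right class. But two steps are wrong as written. First, the identity $A=\bigcup_{\alpha<\om}A_\alpha$ is false in general: wellfoundedness of $T_x$ only bounds the rank below $\omega_1^x$, so $A\setminus\bigcup_{\alpha<\om}A_\alpha$ is contained in $\{x\mid \omega_1^x>\om\}$ and need not be empty. What you actually need, and what the paper explicitly invokes, is Sacks' theorem that this set is null (obtained in the paper via Lemma \ref{preservation of admissibility by randoms}, cf.\ the first half of Lemma \ref{classical characterization of Pi11-randoms}); only then does $\mu(A)=\sup_{\alpha<\om}\mu(A_\alpha)$ hold, which everything else rests on. Without it you only get $\mu(A)\geq\sup_{\alpha<\om}\mu(A_\alpha)$.

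Second, your translation between definability over $L_{\om}$ and the lightface projective classes is inverted: for subsets of $\mathbb{Q}$, $\Sigma_1$-definability over $L_{\om}$ corresponds to $\Pi^1_1$ and $\Pi_1$-definability to $\Sigma^1_1$, not the other way around as in the parentheticals of your ``cleanest route'' and in your final sentence. In particular the last reformulation fails twice over: the statement ``$\mu(A_\alpha)\leq q'$ for all $\alpha<\om$'' is genuinely $\Pi_1$ over $L_{\om}$, and no collection/bounding trick makes a universal statement over $\om$ ``witnessed boundedly''; and even if it were $\Sigma_1$, that would yield $\Pi^1_1$, not $\Sigma^1_1$. (Indeed $\{q\mid q>\mu(A)\}$ cannot in general be $\Sigma_1$ over $L_{\om}$, since then $\mu(A)$ would always be a $\Delta^1_1$ real.) The correct argument is the one buried in the middle of your last paragraph, read with the correspondence the right way around: $q\leq\mu(A)$ iff for every rational $q'<q$ there is $\alpha<\om$ with $\mu(A_\alpha)>q'$, which by $\Sigma_1$-collection in the admissible $L_{\om}$ is equivalent to the existence of a single bound $\beta<\om$ working for all such $q'$; this is $\Sigma_1$ over $L_{\om}$, uniformly in the $\Pi^1_1$ index (the Gandy--Spector parameter being computable from the given tree), hence $\Pi^1_1$, which is exactly left-$\Pi^1_1$-ness. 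The complement $\{q\mid q>\mu(A)\}$ is then $\Sigma^1_1$ for free; you should not try to establish that side directly.
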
 
\begin{proof} 
Using the Gandy-Spector theorem and Sacks' theorem (see Lemma \ref{preservation of admissibility by randoms}) that the set of reals $x$ 
with $\omega_1^x=\om$ has full measure, we can associate to a given $\Pi^1_1$ set a sequence of length $\om$ of hyperarithmetic subsets, such that their union approximates the set up to measure $0$. 
This shows that the measure is left-$\Pi^1_1$. 
Moreover, in the proof of the Gandy-Spector theorem (see \cite[Theorem 5.5]{Hjorth-Vienna-notes-on-descriptive-set-theory}) for a $\Pi^1_1$ set ${}^\omega 2\setminus p[T]$, the $\Sigma_1$-formula states that $T_x$ is well-founded, and hence the parameter in the formula is uniformly computable from $T$, and the assignment is uniform. 
\end{proof} 

\begin{lemma} 
Every \ITTM-random is \ML-random and every \ML-random is $\Pi^1_1$-random. 
\end{lemma}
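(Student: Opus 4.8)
The plan is to prove the two inclusions separately, in each case by showing that a witnessing null test in the weaker class can be converted into one in the stronger class, so that a real avoiding all tests of the stronger class avoids all tests of the weaker class.

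\emph{Every $\mathrm{ITTM}$-random is $\mathrm{ITTM}_{\mathrm{ML}}$-random.} Suppose $x$ is not $\mathrm{ITTM}_{\mathrm{ML}}$-random, so $x$ lies in $\bigcap_n U_n$ for a uniformly $\mathrm{ITTM}$-semidecidable sequence $\langle U_n\mid n\in\omega\rangle$ of open sets with $\mu(U_n)\leq 2^{-n}$. First I would note that each $U_n$ is itself an $\mathrm{ITTM}$-semidecidable set (it is semidecidable to find an enumerated basic clopen piece covering the oracle), and $\mu(U_n)\to 0$; hence $\bigcap_n U_n$ is an $\mathrm{ITTM}$-semidecidable null set. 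Concretely, membership of $z$ in $\bigcap_n U_n$ is witnessed inside $L_{\lambda^z}[z]$ by a $\Sigma_1$-statement asserting that for every $n$ some enumerated string is an initial segment of $z$ entering $U_n$; by Lemma~\ref{characterization of ITTM-semidecidable} this set is $\mathrm{ITTM}$-semidecidable, and it is null since it is contained in each $U_n$. Since $x$ is in this set, $x$ is not $\mathrm{ITTM}$-random, which is the contrapositive of what we want.

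\emph{Every $\mathrm{ITTM}_{\mathrm{ML}}$-random is $\Pi^1_1$-random.} Here the point is that the $\mathrm{ITTM}_{\mathrm{ML}}$ machinery is genuinely stronger than the $\Pi^1_1$ machinery: a $\Pi^1_1$ Martin-L\"of test is in particular realizable by $\mathrm{ITTM}$s. Suppose $x$ is not $\Pi^1_1$-random. By the existence of a universal $\Pi^1_1$-ML test (see \cite[Theorem 5.2]{MR2340241}, or equivalently the characterization via $\Pi^1_1$ null sets), $x$ lies in a $\Pi^1_1$ null set, equivalently in $\bigcap_n V_n$ where $\langle V_n\rangle$ is a uniformly $\Pi^1_1$ sequence of open sets with $\mu(V_n)\le 2^{-n}$; using the Gandy--Spector theorem \cite[Theorem 5.5]{Hjorth-Vienna-notes-on-descriptive-set-theory} each $V_n$ is $\Sigma_1$-definable over $L_{\omega_1^x}[x]$ uniformly in $x$, and since $\omega_1^x<\lambda^x$ always, this is a $\Sigma_1$-statement over $L_{\lambda^x}[x]$. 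Hence $\langle V_n\rangle$ is a uniformly $\mathrm{ITTM}$-semidecidable sequence of open sets with the required measure bounds, i.e.\ an $\mathrm{ITTM}_{\mathrm{ML}}$-test capturing $x$. Thus $x$ is not $\mathrm{ITTM}_{\mathrm{ML}}$-random.

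The main obstacle I anticipate is bookkeeping rather than conceptual: making precise that a $\Pi^1_1$ open set, presented via the Gandy--Spector $\Sigma_1$-representation over $L_{\omega_1^x}[x]$, is recovered by an $\mathrm{ITTM}$ as a $\Sigma_1$-fact over $L_{\lambda^x}[x]$ with the measure approximations preserved uniformly in the index $n$ — this uses that $L_{\lambda^x}[x]$ can search for writable codes of initial segments of $L_{\omega_1^x}[x]$ (Theorem~\ref{lambdazetasigma}) and that measures of Borel sets with codes in an admissible set are computable there. For the first inclusion the only subtlety is checking that the intersection $\bigcap_n U_n$, and not just the individual $U_n$, is semidecidable, which follows because the defining condition is $\Pi_1$ over $\omega$ of $\Sigma_1$-over-$L_{\lambda^x}[x]$ statements and $L_{\lambda^x}[x]$ is admissible, hence the whole thing is again $\Sigma_1$ over $L_{\lambda^x}[x]$.
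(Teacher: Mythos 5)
Your first inclusion is fine and matches the paper (which dismisses it as obvious): the intersection of a uniformly semidecidable ML-type test is itself a semidecidable null set, the quantifier $\forall n\in\omega$ being absorbed into a $\Sigma_1$-statement over the admissible set $L_{\lambda^z}[z]$.

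The second inclusion, however, has a genuine gap at its very first step. You assert that if $x$ is not $\Pi^1_1$-random, then $x$ lies in $\bigcap_n V_n$ for a \emph{uniformly $\Pi^1_1$} sequence of open sets with $\mu(V_n)\le 2^{-n}$, citing the universal test of \cite[Theorem 5.2]{MR2340241}. But that theorem gives the largest $\Pi^1_1$ \emph{null set}, not a universal $\Pi^1_1$ \emph{Martin-L\"of} test, and the equivalence you invoke ("lies in a $\Pi^1_1$ null set, equivalently in such a $\bigcap_n V_n$") is false: it would say precisely that $\Pi^1_1$-ML-randomness coincides with $\Pi^1_1$-randomness, which is known to fail (for instance there are $\Pi^1_1$-ML-random reals with $\omega_1^x>\om$, whereas every $\Pi^1_1$-random $x$ has $\omega_1^x=\om$, cf. Lemma \ref{classical characterization of Pi11-randoms}). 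Indeed, note that your argument never uses the strength of \ITTM s in an essential way -- it only converts a $\Pi^1_1$ presentation into a semidecidable one -- so if it were correct it would prove that every $\Pi^1_1$-ML-random is $\Pi^1_1$-random, which is false; the extra computational power of \ITTM s must enter somewhere.

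Where it enters in the paper's proof is the measure computation: write the complement of the given $\Pi^1_1$ null set as a $\Sigma^1_1$ set $p[T]$ of full measure, and use the fact that measures of $\Pi^1_1$ (equivalently, of $\Sigma^1_1$) sets are uniformly left-$\Pi^1_1$ (Lemma \ref{measure of Pi11 sets}), hence ITTM-writable, to build, uniformly in $n$ and ITTM-effectively, finitely splitting subtrees $S_n$ of $T$ with $\mu(p[T]\setminus p[S_n])\le 2^{-n}$. The complements of the compact sets $p[S_n]$ are then open sets of measure at most $2^{-n}$, enumerated by an \ITTM\ uniformly in $n$, and they cover the original $\Pi^1_1$ null set; this is the \ML-test you need. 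So the correct argument is an \emph{inner compact approximation} of the co-null $\Sigma^1_1$ set, driven by ITTM-computability of the relevant measures, rather than an outer approximation of the null set by $\Pi^1_1$ open sets, which in general does not exist.
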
 
\begin{proof} 
The first implication is obvious. 
For the second implication, suppose that $A=p[T]$ is a $\Sigma^1_1$. 
Using Lemma \ref{measure of Pi11 sets}, we inductively build finitely splitting subtrees $S_n$ of $T$ with $\mu([T]\setminus [S_n])\leq 2^{-n}$, uniformly in $n$. 
This sequence can be written by an \ITTM. 
\end{proof} 
\section{Questions} 

We conclude with several open questions. 
Surprisingly, the proof of Theorem \ref{Sigma2 reflection} does not answer the following question. 

\begin{question} 
Is $\zeta^x=\zeta$ for every \ITTM-random $\zeta$? 
\end{question} 

%

Moreover, we have left open various questions about the connections between randomness notions and their properties. 
The following question asks if a property of $\mathrm{ML}$-random and $\Delta^1_1$-random (see \cite[Theorem 14.1.10]{Yu}) holds in this setting. 

\begin{question} 
Is \ML-random strictly stronger than random over $L_\lambda$? 
\end{question} 

The fact that \ML-random is strictly stronger than $\Pi^1_1$-random suggests an analogue for $\Sigma_n$-hypermachines. 

\begin{question} 
Is every $\mathrm{ML}$-random with respect to $\Sigma_{n+1}$-hypermachines already semidecidable random with respect to $\Sigma_n$-hypermachines? 
\end{question}

Since the complexity of the set of $\Pi^1_1$-randoms is $\bf{\Pi}^0_3$ \cite[Corollary 27]{MR3181446} and this is optimal (see \cite[Theorem 28]{MR3181446} and \cite{MR2860186}), this suggests the following question. 

\begin{question} 
What is the complexity of the set of \ITTM-random reals? 
\end{question} 

The set $\mathrm{NCR}$ is defined as the set of reals that are not random with respect to any continuous measure. 
It is known that this set has different properties in the hyperarithmetic setting \cite{MR3436362} and for randomness over the constructible universe $L$ \cite{YuZhu}. 

\begin{question} 
Is there concrete description of the set $\mathrm{NCR}$, defined with respect to \ITTM-randomness? 
\end{question} 

Moreover, it is open whether Theorem \ref{computable from mutual ITTM-randoms} fails for \ML-randomness. More precisely, we can ask for an analogue to the counterexample or $\mathrm{ML}$-randomness (see \cite[Section 5.3]{MR2548883}). 

\begin{question} 
Let $\Omega_0$ and $\Omega_1$ denote the halves of the \ITTM-version of Chaintin's $\Omega$ (i.e. the halting probability for a universal prefix-free machine). 
Is some non-computable real computable from both $\Omega_0$ and $\Omega_1$? 
\end{question}

\bibliographystyle{alpha} 
\bibliography{references}

\newcommand{\etalchar}[1]{$^{#1}$}
\begin{thebibliography}{CFK{\etalchar{+}}10}

\bibitem[AC13]{MR3003923}
Alessandro Andretta and Riccardo Camerlo.
\newblock The descriptive set theory of the {L}ebesgue density theorem.
\newblock {\em Adv. Math.}, 234:1--42, 2013.

\bibitem[Bar75]{MR0424560}
Jon Barwise.
\newblock {\em Admissible sets and structures}.
\newblock Springer-Verlag, Berlin-New York, 1975.
\newblock An approach to definability theory, Perspectives in Mathematical
  Logic.

\bibitem[BGM]{Continuous-higher-randomness}
Laurent Bienvenu, Noam Greenberg, and Benoit Monin.
\newblock Continuous higher randomness.
\newblock Submitted, 2016.

\bibitem[BJ95]{Bartoszynski-Judah}
Tomek Bartoszy{\'n}ski and Haim Judah.
\newblock {\em Set theory. {O}n the structure of the real line}.
\newblock A K Peters Ltd., Wellesley, MA, 1995.

\bibitem[Cara]{Infinite-time-recognizability-from-random-oracles}
Merlin Carl.
\newblock Infinite time recognizability from random oracles and the
  recognizable jump operator.
\newblock Preprint.

\bibitem[Carb]{Randomness-and-degree-theory-for-ITRMs}
Merlin Carl.
\newblock Randomness and degree theory for infinite time register machines.
\newblock Preprint.

\bibitem[Car15]{Ca5}
Merlin Carl.
\newblock Infinite time recognizability from random oracles and the
  recognizable jump operator.
\newblock Preprint, 2015.

\bibitem[CFK{\etalchar{+}}10]{MR2592054}
Merlin Carl, Tim Fischbach, Peter Koepke, Russell Miller, Miriam Nasfi, and
  Gregor Weckbecker.
\newblock The basic theory of infinite time register machines.
\newblock {\em Arch. Math. Logic}, 49(2):249--273, 2010.

\bibitem[CS17]{Infinite-computations}
Merlin Carl and Philipp Schlicht.
\newblock Infinite computations with random oracles.
\newblock To appear in the Notre Dame Journal of Formal Logic, 2017.

\bibitem[CY]{Yu}
Chi~Tat Chong and Liang Yu.
\newblock Recursion theory: computational aspects of definability.
\newblock Submitted, 2016.

\bibitem[CY15]{MR3436362}
C.~T. Chong and Liang Yu.
\newblock Randomness in the higher setting.
\newblock {\em J. Symb. Log.}, 80(4):1131--1148, 2015.

\bibitem[DH10]{MR2732288}
Rodney~G. Downey and Denis~R. Hirschfeldt.
\newblock {\em Algorithmic randomness and complexity}.
\newblock Theory and Applications of Computability. Springer, New York, 2010.

\bibitem[Hjo10]{Hjorth-Vienna-notes-on-descriptive-set-theory}
G.~Hjorth.
\newblock {Vienna notes on effective descriptive set theory and admissible
  sets}.
\newblock available at
  http://www.math.uni-bonn.de/people/logic/events/young-set-theory-2010/Hjorth.pdf,
  2010.

\bibitem[HL00]{MR1771072}
Joel~David Hamkins and Andy Lewis.
\newblock Infinite time {T}uring machines.
\newblock {\em J. Symbolic Logic}, 65(2):567--604, 2000.

\bibitem[HN07]{MR2340241}
Greg Hjorth and Andr{\'e} Nies.
\newblock Randomness via effective descriptive set theory.
\newblock {\em J. Lond. Math. Soc. (2)}, 75(2):495--508, 2007.

\bibitem[Ike10]{MR2601017}
Daisuke Ikegami.
\newblock Forcing absoluteness and regularity properties.
\newblock {\em Ann. Pure Appl. Logic}, 161(7):879--894, 2010.

\bibitem[Jec03a]{Jech}
Thomas Jech.
\newblock {\em Set theory}.
\newblock Springer Monographs in Mathematics. Springer-Verlag, Berlin, 2003.
\newblock The third millennium edition, revised and expanded.

\bibitem[Jec03b]{MR1940513}
Thomas Jech.
\newblock {\em Set theory}.
\newblock Springer Monographs in Mathematics. Springer-Verlag, Berlin, 2003.
\newblock The third millennium edition, revised and expanded.

\bibitem[Kan09]{MR2731169}
Akihiro Kanamori.
\newblock {\em The higher infinite}.
\newblock Springer Monographs in Mathematics. Springer-Verlag, Berlin, second
  edition, 2009.
\newblock Large cardinals in set theory from their beginnings, Paperback
  reprint of the 2003 edition.

\bibitem[Kec73]{MR0369072}
Alexander~S. Kechris.
\newblock Measure and category in effective descriptive set theory.
\newblock {\em Ann. Math. Logic}, 5:337--384, 1972/73.

\bibitem[Mon14]{MR3181446}
Benoit Monin.
\newblock Higher randomness and forcing with closed sets.
\newblock In {\em 31st {I}nternational {S}ymposium on {T}heoretical {A}spects
  of {C}omputer {S}cience}, volume~25 of {\em LIPIcs. Leibniz Int. Proc.
  Inform.}, pages 566--577. Schloss Dagstuhl. Leibniz-Zent. Inform., Wadern,
  2014.

\bibitem[Nie09]{MR2548883}
Andr{\'e} Nies.
\newblock {\em Computability and randomness}, volume~51 of {\em Oxford Logic
  Guides}.
\newblock Oxford University Press, Oxford, 2009.

\bibitem[Sac90]{MR1080970}
Gerald~E. Sacks.
\newblock {\em Higher recursion theory}.
\newblock Perspectives in Mathematical Logic. Springer-Verlag, Berlin, 1990.

\bibitem[SS12]{MR3068304}
Philipp Schlicht and Benjamin Seyfferth.
\newblock Tree representations via ordinal machines.
\newblock {\em Computability}, 1(1):45--57, 2012.

\bibitem[Wel00]{MR1734198}
P.~D. Welch.
\newblock The length of infinite time {T}uring machine computations.
\newblock {\em Bull. London Math. Soc.}, 32(2):129--136, 2000.

\bibitem[Wel09]{MR2493990}
P.~D. Welch.
\newblock Characteristics of discrete transfinite time {T}uring machine models:
  halting times, stabilization times, and normal form theorems.
\newblock {\em Theoret. Comput. Sci.}, 410(4-5):426--442, 2009.

\bibitem[Yu11]{MR2860186}
Liang Yu.
\newblock Descriptive set theoretical complexity of randomness notions.
\newblock {\em Fund. Math.}, 215(3):219--231, 2011.

\bibitem[YZ]{YuZhu}
Liang Yu and Yizheng Zhu.
\newblock On the reals that cannot be random.
\newblock Downey Festschrift, Lecture Notes in Computer Science, to appear.

\end{thebibliography}


\begin{thebibliography}{}
\bibitem[AC]{AC} Andretta-Camerlo. The descriptive set theory of the Lebesgue density theorem.
 \bibitem[BGM]{BGM} L. Bienvenu, N. Greenberg, B. Monin. Continuous Higher Randomness.
\bibitem[CH]{CH} M. Carl, P. Schlicht. Infinite Computations with Random Oracles. To appear in the Notre Dame Journal of Formal Logic.
\bibitem[C1]{C1} M. Carl. Randomness and Degree Theory for Infinite Time Register Machines.
\bibitem[C2]{C2} M. Carl. Infinite Time Recognizability from Random Oracles and the Recognizable Jump Operator.
\bibitem[DH]{DH} Downey, Hirschfeldt: Algorithmic Randomness and Complexity
\bibitem[H] G. Hjorth. Vienna Notes on Descriptive Set Theory.
\bibitem[HL]{HL} J. Hamkins. A. Lewis. Infinite Time Turing Machines.
\bibitem[HN]{HN} G. Hjorth, A. Nies. Randomness in Effective Descriptive Set Theory.
\bibitem[K]{K} Measure and category in effective descriptive set theory 
\bibitem[I]{I} Ikegami: Regularity properties 
\bibitem[N]{N} A. Nies. (Buch)
\bibitem[W]{E} Welch: 
\bibitem[Y]{Y} Yu: Descriptive set theoretic complexity or randomness notions 
\end{thebibliography}

\end{document}